\newtheorem{theorem}{Theorem}
\newtheorem{corollary}{Corollary}
\newtheorem{lemma}{Lemma}
\newtheorem{proposition}{Proposition}
\newtheoremstyle{noparens}%
{}{}%
{\itshape}{}%
{\bfseries}{.}%
{ }%
{\thmname{#1}\itshape\thmnumber{ #2}\mdseries\thmnote{ #3}}
\theoremstyle{noparens}
\newtheorem{mylemma}[lemma]{Lemma}
\newtheorem{mytheorem}[theorem]{Theorem}
\newtheorem{myproposition}[proposition]{Proposition}
\begin{document}
\captionsetup[figure]{name={Fig.},labelsep=period,font=small,justification=justified} 
\captionsetup[table]{name={TABLE},labelsep=newline,font=small} 
\title{Direct Estimation of Eigenvalues of Large Dimensional Precision Matrix}

\author{Jie~Zhou, ~Junhao~Xie,~ and ~Jiaqi Chen 
\thanks{Jie~Zhou and ~Junhao~Xie are with Department of Electronic Engineering,
Harbin Institute of Technology, Harbin 150001, China (e-mail:22b905037@stu.hit.edu.cn; xj@hit.edu.cn). 
Jiaqi Chen is with School of Mathematics, Harbin Institute of Technology, Harbin 150001, China (e-mail:chenjq1016@hit.edu.cn). }
}

\maketitle

\begin{abstract}
 In this paper, we consider directly estimating the eigenvalues of precision matrix, without inverting the corresponding estimator for the eigenvalues of covariance matrix.  We focus on a general asymptotic regime, i.e., the large dimensional regime, where both the dimension $N$ and the sample size $K$ tend to infinity whereas their quotient $N/K$ converges to a positive constant.  By utilizing tools from random matrix theory, we construct an improved estimator for eigenvalues of precision matrix. We prove the consistency of the new estimator under large dimensional regime. In order to obtain the asymptotic bias term of the proposed estimator, we provide a theoretical result that characterizes the convergence rate of the expected Stieltjes transform (with its derivative) of the spectra of the sample covariance matrix. Using this result, we prove that the asymptotic bias term of the proposed estimator is of order $O(1/K^2)$.  Additionally, we establish a central limiting theorem (CLT) to describe the fluctuations of the new estimator. Finally, some numerical examples are presented to validate the excellent performance of the new estimator and to verify the accuracy of the CLT. 
\end{abstract}

\begin{IEEEkeywords}
	  Precision matrix, random matrix theory, Stieltjes transform, G-estimation, sample covariance matrix
\end{IEEEkeywords}

\IEEEpeerreviewmaketitle
\section{Introduction}
The precision matrix (i.e., the inverse of the covariance matrix) plays a crucial role in various fields, including statistics, finance, wireless communications, and radar signal processing \cite{PAUL2014Random, Ledoit2017Nonlinear,  Couillet2011Random,  Maio2015Modern}. For example, in finance, the Markowitz's optimal portfolio weight is defined in terms of precision matrix \cite{Markowitz1952Portfolio}, and in radar detection, the precision matrix is used to construct the test statistic of matched filter detector \cite{Robey1992A}.

In practical applications, however, due to the lack of a priori knowledge of the true precision matrix, one usually estimates it from limited observations. Traditionally, this is achieved by inverting an estimator of the covariance matrix. For instance, the sample covariance matrix (SCM), as the simplest but efficient estimator of the covariance matrix, is routinely inverted to obtain an estimate of precision matrix. In some signal processing applications \cite{Kelly1986An}, the inverse of SCM is often referred to as sample matrix inversion (SMI). The SMI is a biased but consistent estimator for the true precision matrix in the situation where the sample size $K$ is much larger than the dimension $N$. This scenario  has been extensively explored in the field of multivariate statistics over the past decades \cite{Muirhead1982Aspects}.

Unfortunately, in practice  we often encounter the scenario where the sample size $K$ is comparable in magnitude to the dimension $N$. Such a case is well known as the large dimensional (or high dimensional) regime in random matrix theory (RMT) \cite{Bai2009Spectral, Couillet2014Robust, Couillet2015The} and is of interest for current signal processing applications \cite{Mestre2008Modified,Couillet2016Second,Mestre2020On,Schenck2022Probability}. In large dimensional regime, it is often assumed that both the dimension $N$ and sample size $K$ tend to infinity while their quotient $N/K$ converges to a positive constant.   Under such a regime, the asymptotic behavior of the eigenvalues of SCM has attracted particular attention from statisticians \cite{Bai2009Spectral}. One of the best-known results is the Marcenko-Pastur law \cite{Marcenko1967Distribution} which describes the limiting spectral distribution (LSD) of the eigenvalues of SCM. Silverstein and Choi \cite{Silverstein1995Analysis} studied the analytic behavior of the LSD, offering several general properties pertaining to it. A noteworthy contribution by Bai and Silverstein \cite{Bai1998No} asserts that there will be no eigenvalues located outside the support of the LSD with probability one for all large $N$. They further advanced this understanding by providing an exact separation of eigenvalues \cite{Bai1999Exact}. 

Applying these fundamental results, Mestre \cite{Mestre2008On} demonstrated that a traditional estimator for eigenvalues of the covariance matrix becomes inconsistent in large dimensional regime. Indeed, many traditional estimators that are designed for large sample size case (i.e., $K$ is very large compared to $N$) always tend to perform poorly in large dimensional regime. To address this issue, statisticians have made a lot of efforts to develop improved estimators that maintain consistency in large dimensional regime.  Mestre, in his seminal work \cite{Mestre2008Improved}, pioneered a complex integration approach to construct a consistent estimator for eigenvalues of a large dimensional covariance matrix.  Subsequently, Yao \textit{et al.} \cite{Yao2013Fluctuations} provided the asymptotic fluctuations of Mestre's improved estimator. 
Mestre's methodology offers a standard process for constructing consistent estimators in large dimensional regime. Specifically, his approach comprises three  pivotal steps: (i) relating the LSD of the eigenvalues of SCM with the distribution of eigenvalues of true covariance matrix by means of their respective Stieljes transforms, then (ii) expressing the quantity to be estimated as a complex integral involving Stieltjes transform and finally (iii) evaluating this complex integral through complex analysis techniques.  Recently, Mestre's method has been fruitfully applied in various signal processing applications, such as direction of arrival estimation and correlation tests (see for instance, \cite{Mestre2008Modified, Vallet2015Performance, Mestre2017Correlation, Mestre2020On, Schenck2021Probability}).

In addition to Mestre's integration approach, numerous other methodologies have been developed to enhance the covariance matrix estimation in large dimensional regime, including linear shrinkage approach \cite{Ledoit2004A}, nonlinear shrinkage approach \cite{Ledoit2012Nonlinear, Ledoit2015Spectrum, Ledoit2020Analytical,Ledoit2021Shrinkage,BUN2017Cleaning} and the moment method \cite{Yao2012Eigenvalue}.  Ledoit and Wolf \cite{Ledoit2004A} pioneered the linear shrinkage approach, constructing a well-conditioned estimator for large covariance matrices by linearly shrinking the eigenvalues of SCM. They derived explicit expressions for the optimal shrinking weights by minimizing a quadratic loss function, and further provided  consistent estimators for these optimal weights. Beyond the linear shrinkage method, Ledoit and Wolf have made substantial contributions to the field of nonlinear shrinkage estimation \cite{Ledoit2012Nonlinear, Ledoit2015Spectrum, Ledoit2020Analytical}. In their earlier works \cite{Ledoit2012Nonlinear} and \cite{Ledoit2015Spectrum}, they introduced two indirect nonlinear shrinkage estimators, both of which heavily rely on a multivariate function known as the the Quantized Eigenvalues Sampling Transform (QuEST) function. However, the QuEST function needs numerical methods to solve, thereby leading to complexities in its implementation. To overcome this limitation, Ledoit and Wolf established an analytical nonlinear shrinkage estimation in  \cite{Ledoit2020Analytical}, where the optimal nonlinear shrinkage function is expressed as an explicit formula involving the Hilbert transform of the LSD of SCM. Further, in \cite{Ledoit2021Shrinkage}, they proposed a range of nonlinear shrinkage methodologies tailored to diverse loss functions. Regrettably, all these estimators still depend on the QuEST function to obtain the estimates of the population eigenvalues.  Parallel to these works, J. Bun \textit{et al.} \cite{BUN2017Cleaning} developed a nonlinear shrinkage approach using Bayes theory. Aside from these shrinking methods, the moment method was exploited by Yao \textit{et al.} \cite{Yao2012Eigenvalue} to estimate the eigenvalues of large covariance matrix. While sharing similarities with Mestre's methodology, the moment method differs primarily in step (ii). Specifically, in moment method, the quantities to be estimated are the moments of the spectra of the population covariance matrix, which can be formulated as complex integrals in terms of the LSD of SCM. Once these moment estimates are obtained, a system of equations is constructed to estimate the eigenvalues along with their respective multiplicities. It is worth mentioning that the moment method relaxes the spectral separation assumption, which is a prerequisite for Mestre's approach.

In contrast to the large SCM that has garnered extensive exploration in RMT, the asymptotic behavior of large SMI has received comparatively less attention. However, a more generalized matrix model, known as the Fisher matrix or F-matrix, has been widely studied in multivariate statistical analysis. The F-matrix is defined by $\mathbf{F} \triangleq \mathbf{B}_1\mathbf{B}_2^{-1}$ with $\mathbf{B}_1$ and $\mathbf{B}_2$ being two SCM. When taking $\mathbf{B}_1$ as an identity matrix, the F-matrix naturally degenerates into the SMI. The eigenvalues of F-matrix are usually used to construct a metric to measure the distance between two covariance matrices \cite{Couillet2019Random, Pereira2024Asymptotics}. The limiting behavior of the eigenvalues of a large dimensional F-matrix has been analyzed by some statisticians, see e.g., Yin, Bai and Krishnaiah \cite{Yin1983Limiting} and Silverstein \cite{Silverstein1985The}. Recently, Zheng, Bai and Yao \cite{Zheng2017CLT} provided the limiting distribution of eigenvalues of a large dimensional general F-matrix in terms of Stieltjes transform. We emphasize that the results in Zheng, Bai and Yao \cite{Zheng2017CLT} allow us to find the limiting behavior of eigenvalues of a large dimensional SMI, which will be stated in Theorem \ref{Theorem Weak Convergence of u_N} in Section II-B. 

Indeed, over the past decades, the direct estimation of precision matrix has always been one of the most popular research topics in multivariate statistics. For instance, Zhang \textit{et al.} \cite{Zhang2013Improved} derived an estimator for the precision matrix by utilizing weighted sampling and linear shrinkage. Bodnar \textit{et al.} \cite{Bodnar2016Direct} constructed the optimal shrinkage estimator for precision matrix which is proved to possess almost surely the smallest Frobenius loss under large dimensional regime. Furthermore, Tiomoko \textit{et al.} \cite{Tiomoko2020Random} proposed a series of improved precision matrix estimation under a wide family of metrics that characterize the covariance distances.

In this paper, we shift the focus from estimating the precision matrix itself to estimating its eigenvalues. This is because, in many real applications, such as principal component analysis \cite{Anderson1963Asymptotic} and source detection \cite{Bianchi2011Performance}, eigenvalues hold greater significance.
The limiting behavior of eigenvalues of SMI enables us to  directly estimate the eigenvalues of precision matrix, without inverting the estimators for eigenvalues of covariance matrix. We emphasize that the main contributions of this paper lie in two aspects. Firstly, we construct an improved estimator for eigenvalues of precision matrix. We prove the consistency of the proposed estimator under large dimensional regime. Further, we provide a CLT to characterize the asymptotic fluctuation of the proposed estimator. As we will see in Section V, although the consistency of the proposed estimator is established on a spectral separation assumption, it still outperforms competitors in the case where the separability condition is not met. Furthermore, the proposed estimator offers the additional advantage of high computational efficiency, because it is simply a combination of sample estimates.  Secondly, as a preliminary step, we establish a theoretical result that describes the convergence rate of the expected Stieltjes transform (with its derivative) of the spectra of SCM (see Theorem \ref{Theorem Convergence Rate of Expected Stieltjes Transform} in Section III-A). This result will play an important role in precisely controlling the bias term of the proposed estimator. 

The remainder of this paper is organized as follows. Section II introduces the notations in this paper and presents some standard assumptions and useful results that will be used in the following sections. Section III presents the main results of this paper. Section IV provides the proofs of some results presented in Section III.  Some numerical examples are provided in Section V to verify the superiority of the proposed estimator and to validate the theoretical results. Section VI concludes the paper. Additionally, due to the space constraints, the proofs of some theoretical results are deferred to Supplementary Material.

\section{Notations, Assumptions and Useful Results}
\subsection{Notations and Assumptions}
\subsubsection{Notations}Throughout the paper and Supplementary Material, we use lowercase, boldface lowercase and boldface uppercase to stand for scalars, vectors and matrices, respectively. For matrices, a subscript will be added to emphasize dimension, though it will be occasionally dropped for the sake of clarity of presentation. $\mathbf{I}_N$ denotes an $N\times N$ identity matrix.  For a matrix $\mathbf{X}$, its $(i,j)$th entry is denoted by $\mathbf{X}_{ij}$ and its spectral norm is represented by $\Vert\mathbf{X}\Vert$. The superscripts $(\cdot)^T$ and $(\cdot)^H$ stands for transpose and conjugate transpose, respectively.  $\mathbb{R}$ and $\mathbb{C}$ respectively denote the real and complex fields with dimension specified by superscripts. ${\rm tr}(\cdot)$, $E(\cdot)$ and ${\rm Var}(\cdot)$ denote trace, expectation and variance,  respectively. For a complex number $z$, its real and imaginary parts are respectively denoted by ${\rm Re}(z)$ and ${\rm Im}(z)$; $\bar{z}$ denotes the conjugate of $z$; $\mathsf{i}$ represents the imaginary unit. The convergence in distribution will be denoted by $\overset{d}{\longrightarrow}$, and almost sure convergence by  $\overset{a.s.}{\longrightarrow}$. For a continuous differentiable function $f$, its derivative is
denoted by $f'$. For any two functions $f$ and $g$, $f=O(g)$ will denote $|f|< k|g|$ for a positive constant $k$. $P_m$  denotes a generic deterministic polynomial of degree $m$, whose coefficients are nonnegative and independent of the sample size $K$.

\subsubsection{Assumptions}
We consider $K$ independent observations $\mathbf{y}_1,\mathbf{y}_2,...,\mathbf{y}_K$ with $\mathbf{y}_i\in\mathbb{C}^N$ ($i=1,2,...,K$) having zero mean and covariance matrix $\mathbf{R}_N\in\mathbb{C}^{N\times N}$.  

The inverse of $\mathbf{R}_N$, denoted by $\mathbf{M}_N$, is called precision matrix. Regarding $\mathbf{R}_N$ and $\mathbf{M}_N$, we make the following assumption.
\begin{itemize}
	\item (A1) $\mathbf{R}_N$ is an Hermitian matrix with $L$ ($L$ being fixed and known) distinct eigenvalues $\lambda_1<\lambda_2<\cdots<\lambda_L$ with respective multiplicities $N_1,N_2,...,N_L$ such that $\sum_{i=1}^LN_i=N$. In addition, there exists $\lambda_{\rm min}$ and $\lambda_{\rm max}$, such that $0<\lambda_{\rm min}<\inf\limits_{N}(\lambda_1)\leqslant\sup\limits_{N}(\lambda_L)<\lambda_{\rm max}<\infty$ for all large $N$.
\end{itemize}

\textit{Remark 1:} Assumption (A1) inherently implies that $\mathbf{M}_N$ also has $L$ distinct eigenvalues $\gamma_1>\gamma_2>\cdots>\gamma_L$. Notice that, for convenience, here we arrange the eigenvalues of $\mathbf{M}_N$ in ascending order. Given this notation, it is apparent that $\gamma_i=1/\lambda_i$, and $\lambda_i$ and $\gamma_i$ share the same multiplicity $N_i$ for all $i=1,2,...,L$. 

Regarding the dimension $N$, the sample size $K$, and the multiplicities $N_i$ ($1\leqslant i \leqslant L$), we make the following assumption:
\begin{itemize}
	\item (A2) As $N,K\to \infty$, $c_K\triangleq N/K\to c\in(0,1)$,  $N_i/K\to c_i \in(0,1)$ for all $1\leqslant i \leqslant L$.
\end{itemize}

All the observations $\mathbf{y}_1,\mathbf{y}_2,...,\mathbf{y}_K$ compose an observation matrix, denoted by $\mathbf{Y} = \frac{1}{\sqrt{K}} [\mathbf{y}_1,\mathbf{y}_2,...,\mathbf{y}_K]$, and then the SCM is defined by
\begin{equation}\label{key} 
	 \hat{\mathbf{R}}_N = \mathbf{Y}\mathbf{Y}^H.
\end{equation}

We further assume that
\begin{itemize}
	\item (A3) $\mathbf{Y} = \mathbf{R}_N^{1/2}\mathbf{X}$ with $\mathbf{X}\in \mathbb{C}^{N\times K}$ being composed of  independent and identically distributed (i.i.d.) complex Gaussian random variables with zero mean and unit variance.
\end{itemize}

 Under the assumptions (A1)-(A3), $\hat{\mathbf{R}}_N$ has an inverse, denoted by $\hat{\mathbf{M}}_N = \hat{\mathbf{R}}_N^{-1}$. We call $\hat{\mathbf{M}}_N$ the sample matrix inversion (SMI). We denote the eigenvalues of $\hat{\mathbf{R}}_N$ by $\hat{\sigma}_1<\hat{\sigma}_2<\cdots<\hat{\sigma}_N$ and the eigenvalues of $\hat{\mathbf{M}}_N$ by $\hat{\rho}_1>\hat{\rho}_2>\cdots>\hat{\rho}_N$. Given this notation, it is obvious that $\hat{\rho}_i = 1/\hat{\sigma}_i$ for all $i=1,2,...,N$.  The goal of this paper is to retrieve $\gamma_1,...,\gamma_L$ by using $\hat{\rho}_1,...,\hat{\rho}_N$.
 
 We emphasize that the Gaussianity assumption in (A3) is fairly standard in many adaptive signal processing literature \cite{ Rubio2012A, Schenck2021Probability, Orlando2022A,Dumont2010On}. In addition to these three standard assumptions widely used in RMT, we will further assume a spectral separation condition, which will be stated in the following subsection.

\subsection{Useful Results in RMT}
\subsubsection{The Limiting Distribution of Eigenvalues of $\hat{\mathbf{R}}_N$ and $\hat{\mathbf{M}}_N$} Denote the empirical distributions of eigenvalues of $\hat{\mathbf{R}}_N$ and $\hat{\mathbf{M}}_N$ respectively by $F_{\hat{\mathbf{R}}}(x)$ and $F_{\hat{\mathbf{M}}}(x)$. In order to understand the limiting behavior of eigenvalues of $\hat{\mathbf{R}}_N$ (or $\hat{\mathbf{M}}_N$), it is of vital importance to explore the limiting behavior of $F_{\hat{\mathbf{R}}}(x)$ (or $F_{\hat{\mathbf{M}}}(x)$) in large dimensional regime. 

The Stieltjes transform is a strong tool used in RMT to describe the limiting spectra of a large random matrix. Now let us briefly recall the definition of Stieltjes transform. Let $F$ denote a positive measure defined on $\mathbb{R}$. The Stieltjes transform of $F$, denoted by $b(z)$, is defined by
\begin{equation}\label{ST}
	b(z) = \int_{\mathbb{R}}\frac{dF(\lambda)}{\lambda-z}, \quad\forall z\in\mathbb{C}\setminus{\rm supp}(F)
\end{equation}
where ${\rm supp}(F)$ denotes the support of $F$. The measure $F$ can be uniquely retrieved from $b(z)$ by the following inversion formula:
\begin{equation}\label{Inversion formula}
	F(\lambda) = \frac{1}{\pi}\lim_{y\to0^+}\int_{-\infty}^\lambda{\rm Im}\left(b(x+\mathsf{i}y)\right)dx.
\end{equation}

We can see from \eqref{Inversion formula} that a probability measure is closely related to its Stieltjes transform. This close relationship enables us to study the asymptotic behavior of the measure by exploring the limit behavior of the corresponding Stieltjes transform.

Using \eqref{ST}, we can write the Stieltjes transform of $F_{\hat{\mathbf{R}}}(x)$ by
\begin{equation}\label{key}
	b_{\hat{\mathbf{R}}}(z) = \frac{1}{N}\sum_{i=1}^N\frac{1}{\hat{\sigma}_i-z} 
\end{equation}
for all $z\in\mathbb{C}\setminus\mathbb{R}$. 

Likewise, the Stieltjes transform of $F_{\hat{\mathbf{M}}}(x)$ is given by
\begin{equation}\label{key}
	b_{\hat{\mathbf{M}}}(z) = \frac{1}{N}\sum_{i=1}^N\frac{1}{\hat{\rho}_i-z} 
\end{equation}
for all $z\in\mathbb{C}\setminus\mathbb{R}$.

From the relation $\hat{\sigma}_i = 1/\hat{\rho}_i$ for all $i=1,2,..,N$, it is easy to check that the following relation hold true
\begin{equation}\label{relation between bR and bM}
	b_{\hat{\mathbf{R}}}\left(\frac{1}{z}\right) = -z-z^2b_{\hat{\mathbf{M}}}(z).
\end{equation}

Now we give the  following two theorems  that respectively describe the limiting behavior of $F_{\hat{\mathbf{R}}}$ and $F_{\hat{\mathbf{M}}}$ in terms of their Stieltjes transforms.
\begin{mytheorem}\label{Theorem MP equation}
	Let assumptions (A1)-(A3) hold true. Then $F_{\hat{\mathbf{R}}}$ almost surely converges to a deterministic measure ${F}_K$ with support $\mathcal{S}\subset\mathbb{R}$. Denote the Stieltjes transform of ${F}_K(x)$ by ${b}_K(z)$. Then it holds true that
	\begin{equation}\label{MP equation}
		\left|b_{\hat{\mathbf{R}}}(z)-{b}_K(z)\right|\overset{a.s.}{\longrightarrow}0
	\end{equation}
	for all $z\in \mathbb{C}\setminus \mathbb{R}$, where $b_K(z) = b$ is the unique solution to the equation
	\begin{equation}\label{9}
		{b} = \frac{1}{N}\sum_{i=1}^L{N}_i\frac{1}{\lambda_i(1-c-cz{b})-z}.
	\end{equation}
\end{mytheorem}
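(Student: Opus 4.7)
The statement is essentially the Marčenko–Pastur theorem for the sample covariance matrix with general (Hermitian, bounded-spectrum) population $\mathbf{R}_N$, cast in the form of Silverstein's fixed-point equation \eqref{9}. My plan is to follow the classical three-step route: (i) almost-sure reduction to the expectation, (ii) deriving the Marčenko–Pastur equation for $\mathbb{E}[b_{\hat{\mathbf{R}}}(z)]$ via resolvent algebra, and (iii) checking existence and uniqueness of the solution in \eqref{9}.

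\textbf{Step 1 (Concentration around the mean).} I would fix $z\in\mathbb{C}\setminus\mathbb{R}$ and first show $|b_{\hat{\mathbf{R}}}(z)-\mathbb{E}[b_{\hat{\mathbf{R}}}(z)]|\overset{a.s.}{\longrightarrow}0$. Writing $b_{\hat{\mathbf{R}}}(z)=\frac{1}{N}\operatorname{tr}\mathbf{Q}(z)$ with $\mathbf{Q}(z)=(\hat{\mathbf{R}}_N-z\mathbf{I})^{-1}$, I would decompose $b_{\hat{\mathbf{R}}}(z)-\mathbb{E} b_{\hat{\mathbf{R}}}(z)$ as a sum of martingale differences obtained by conditioning successively on the columns $\mathbf{y}_1,\dots,\mathbf{y}_K$. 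Since each increment has modulus bounded by $2/(N|\operatorname{Im} z|)$ through the rank-one perturbation bound, Burkholder's inequality yields $\mathbb{E}|b_{\hat{\mathbf{R}}}(z)-\mathbb{E} b_{\hat{\mathbf{R}}}(z)|^{2p}=O(K^{-p})$, and Borel–Cantelli gives the a.s.\ vanishing. (For the Gaussian model (A3) one may alternatively invoke the Poincaré–Nash inequality.)

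\textbf{Step 2 (Self-consistent equation for the expectation).} Set $m_K(z)=\mathbb{E} b_{\hat{\mathbf{R}}}(z)$ and write the columns of $\mathbf{Y}$ as $\mathbf{y}_k/\sqrt{K}=\mathbf{R}_N^{1/2}\mathbf{x}_k/\sqrt{K}$, so that $\hat{\mathbf{R}}_N=\frac{1}{K}\sum_{k=1}^K\mathbf{R}_N^{1/2}\mathbf{x}_k\mathbf{x}_k^H\mathbf{R}_N^{1/2}$. Using $-z\mathbf{I}=\hat{\mathbf{R}}_N-z\mathbf{I}-\hat{\mathbf{R}}_N$ together with the Sherman–Morrison identity, one obtains the exact identity
\begin{equation*}
\mathbf{Q}(z)=-\frac{1}{z}\mathbf{I}+\frac{1}{zK}\sum_{k=1}^K\frac{\mathbf{R}_N^{1/2}\mathbf{x}_k\mathbf{x}_k^H\mathbf{R}_N^{1/2}\mathbf{Q}_k(z)}{1+\frac{1}{K}\mathbf{x}_k^H\mathbf{R}_N^{1/2}\mathbf{Q}_k(z)\mathbf{R}_N^{1/2}\mathbf{x}_k},
\end{equation*}
where $\mathbf{Q}_k(z)$ is the resolvent built without the $k$-th column. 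By the trace lemma for Gaussian vectors, $\frac{1}{K}\mathbf{x}_k^H\mathbf{A}\mathbf{x}_k=\frac{1}{K}\operatorname{tr}\mathbf{A}+O_{a.s.}(K^{-1/2}\|\mathbf{A}\|)$, and by the rank-one perturbation bound $\frac{1}{K}|\operatorname{tr}(\mathbf{A}\mathbf{Q}_k)-\operatorname{tr}(\mathbf{A}\mathbf{Q})|=O(K^{-1})$. Substituting these approximations and taking $\frac{1}{N}\operatorname{tr}$ and the expectation gives, after controlling the residual by the uniform bound $\|\mathbf{Q}(z)\|\leqslant |\operatorname{Im} z|^{-1}$,
\begin{equation*}
m_K(z)=\frac{1}{N}\operatorname{tr}\bigl(-z\mathbf{I}-z\,c_K\,\mathbf{R}_N(1-c_K-c_Kz\,m_K(z))^{-1}+o(1)\bigr)^{-1},
\end{equation*}
which, under (A1), is exactly the discrete form \eqref{9} with $N_i/N$ mass at $\lambda_i$, up to $o(1)$.

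\textbf{Step 3 (Existence, uniqueness, and identification of the limit).} For $z\in\mathbb{C}^+$ I would verify that the map $b\mapsto \frac{1}{N}\sum_i N_i[\lambda_i(1-c-czb)-z]^{-1}$ sends the set $\{b\in\mathbb{C}^+:\operatorname{Im}(z(1+cb))>0\}$ into itself and is a strict contraction in the Hilbert metric restricted to that set — this is Silverstein's classical uniqueness argument. Combined with Steps 1–2 this fixes $b_K(z)$ as the unique solution of \eqref{9}, and the Stieltjes inversion formula \eqref{Inversion formula} then recovers a unique deterministic measure $F_K$ whose support $\mathcal{S}\subset\mathbb{R}$ is characterized, as in Silverstein–Choi, by the points where the inverse functional $z(b)=-b^{-1}+\frac{1}{N}\sum_i c_i\lambda_i(1+\lambda_i b)^{-1}$ fails to be monotone.

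The hardest part is Step 2: bookkeeping the quadratic-form deviations and the discrepancy between $\mathbf{Q}$ and the $\mathbf{Q}_k$'s uniformly in $k$ so that, after averaging, the error is genuinely $o(1)$ in probability and almost surely. Once this is under control, Step 1 upgrades the convergence to almost sure, and Step 3 is a self-contained complex-analytic lemma.
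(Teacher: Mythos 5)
The paper itself does not prove Theorem~\ref{Theorem MP equation}: its ``proof'' is a pointer to the literature (Silverstein-type results, Girko), and what you sketch is exactly the classical argument behind those citations --- martingale (or Poincar\'e--Nash) concentration around the mean, a Sherman--Morrison resolvent expansion with the trace lemma, and Silverstein's uniqueness argument plus Stieltjes inversion. So you are not taking a different route from the paper; you are supplying the proof the paper omits. Steps 1 and 3 are standard and sound, with the mild caveat that here $F_K$ and $b_K$ are deterministic \emph{equivalents} (they depend on $K$ through $N_i/N$ and $c_K$), so the claim is $|b_{\hat{\mathbf{R}}}(z)-b_K(z)|\overset{a.s.}{\longrightarrow}0$ rather than convergence to a fixed limit; your argument accommodates this without change.

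The genuine defect is the key display of Step 2. The self-consistent equation you write,
\begin{equation*}
m_K(z)=\frac{1}{N}{\rm tr}\Bigl(-z\mathbf{I}_N-z\,c_K\,\mathbf{R}_N\bigl(1-c_K-c_Kz\,m_K(z)\bigr)^{-1}\Bigr)^{-1}+o(1),
\end{equation*}
is \emph{not} ``exactly the discrete form \eqref{9}''. Per eigenvalue its denominator reads $-z-zc_K\lambda_i/(1-c_K-c_Kzm_K)$, whereas \eqref{9} requires $\lambda_i(1-c_K-c_Kzm_K)-z$; already for $\mathbf{R}_N=\mathbf{I}_N$ your equation reduces to $cz^{2}b^{2}+(c-1)zb+(c-1)=0$ instead of the Mar\v{c}enko--Pastur quadratic $czb^{2}+(z+c-1)b+1=0$, so the step as written fails. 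What the Sherman--Morrison expansion actually delivers is the deterministic equivalent $E\mathbf{Q}(z)\approx\bigl(\mathbf{R}_N/(1+e_K(z))-z\mathbf{I}_N\bigr)^{-1}$ with $e_K(z)=\frac{1}{K}{\rm tr}\bigl(\mathbf{R}_NE\mathbf{Q}(z)\bigr)$; to land on \eqref{9} you still need the identification $1/(1+e_K(z))=1-c_K-c_Kz\,b_K(z)+o(1)$, which comes from the companion relations $\underline{b}_K(z)=c_Kb_K(z)-(1-c_K)/z$ and $\underline{b}_K(z)\approx-1/\bigl(z(1+e_K(z))\bigr)$ (the diagonal resolvent entries of $\mathbf{Y}^H\mathbf{Y}$). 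That conversion is missing from your sketch and is precisely where the wrong formula crept in. A minor further point: Silverstein's uniqueness proof for \eqref{9} compares the imaginary parts of two putative solutions via Cauchy--Schwarz rather than using a Hilbert-metric contraction; phrase Step 3 that way or cite it, but that is cosmetic next to the Step 2 fix.
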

\begin{proof}
	The result \eqref{MP equation}  is very famous and has been established under different assumptions in RMT literature (see for instance \cite{Silverstein1995Strong, GIRKO1996Strong}). This theorem has been used in many literature for different purposes (see for instance \cite{Mestre2008Improved, Mestre2008On}).
\end{proof}

\begin{mytheorem}\label{Theorem Weak Convergence of u_N}
	Let assumptions (A1)-(A3) hold true. There exists a deterministic measure $\tilde{F}_K$ with support $\tilde{\mathcal{S}} \subset \mathbb{R}$ such that $\left|{F}_{\hat{\mathbf{M}}}(x) -\tilde{F}_K(x)\right|\overset{a.s.}{\longrightarrow } 0$ weakly. Denote the Stieltjes transform of $\tilde{F}_K$ by $\tilde{b}_K(z)$. Then it holds that
	\begin{equation}\label{key}
		|b_{\hat{\mathbf{M}}}(z) - \tilde{b}_{K}(z)|\overset{a.s.}{\longrightarrow } 0
	\end{equation}
	for all $z\in \mathbb{C}\setminus \mathbb{R}$, where $\tilde{b}_K(z) = \tilde b$ is the unique solution to the  equation
	\begin{equation}\label{11}
		{\tilde b} = \frac{1}{N}\sum_{i=1}^LN_i\frac{1+zc{\tilde b}}{\gamma_i - z(1+zc{\tilde b})}.
	\end{equation}	
\end{mytheorem}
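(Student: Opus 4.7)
The strategy is to reduce the claim to Theorem \ref{Theorem MP equation} by exploiting the one-to-one reciprocal correspondence $\hat{\rho}_i = 1/\hat{\sigma}_i$ between the eigenvalues of $\hat{\mathbf{M}}_N$ and those of $\hat{\mathbf{R}}_N$. First I would define $\tilde{F}_K$ to be the pushforward of ${F}_K$ under the map $x\mapsto 1/x$, i.e. $\tilde{F}_K(x) = 1 - F_K(1/x)$ for $x>0$ (with the obvious modification if $\mathcal{S}$ contains negative points, which it cannot here since $\mathbf{R}_N$ is positive definite). Since the map $x\mapsto 1/x$ is continuous on intervals bounded away from the origin, and since under (A1)--(A3) with $c\in(0,1)$ the smallest eigenvalue of $\hat{\mathbf{R}}_N$ is bounded below by a positive constant almost surely for all large $N$ (a standard Bai--Yin/no-eigenvalues-outside-the-support type consequence), the support $\mathcal{S}$ of $F_K$ is bounded away from zero, which makes the pushforward $\tilde F_K$ a well-defined probability measure with support $\tilde{\mathcal{S}}\subset\mathbb{R}$ also bounded away from the origin.

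Next, I would transfer the almost sure Stieltjes transform convergence. Since $\hat{\sigma}_i = 1/\hat{\rho}_i$, relation \eqref{relation between bR and bM} gives $b_{\hat{\mathbf{R}}}(1/z) = -z - z^2 b_{\hat{\mathbf{M}}}(z)$. The identical algebraic manipulation applied to the pushforward measure yields the deterministic companion identity $b_K(1/z) = -z - z^2 \tilde{b}_K(z)$, where $\tilde b_K$ denotes the Stieltjes transform of $\tilde F_K$. Subtracting these two identities gives
\begin{equation*}
	|b_{\hat{\mathbf{M}}}(z) - \tilde{b}_K(z)| = \frac{1}{|z|^2}\bigl|b_{\hat{\mathbf{R}}}(1/z) - b_K(1/z)\bigr|,
\end{equation*}
which, by Theorem \ref{Theorem MP equation} applied at the point $1/z\in\mathbb{C}\setminus\mathbb{R}$, converges to $0$ almost surely for every $z\in\mathbb{C}\setminus\mathbb{R}$. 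The weak convergence $|F_{\hat{\mathbf{M}}}(x)-\tilde F_K(x)|\overset{a.s.}{\longrightarrow}0$ then follows either from the inversion formula \eqref{Inversion formula} or directly from the continuous mapping theorem applied to the reciprocal map on $\mathcal{S}$.

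It remains to identify $\tilde{b}_K(z)$ as the unique solution of \eqref{11}. The plan is algebraic: substitute $b = b_K(1/z) = -z - z^2\tilde{b}$ into the Marchenko--Pastur equation \eqref{9} evaluated at $1/z$, i.e.
\begin{equation*}
	b = \frac{1}{N}\sum_{i=1}^L N_i \frac{1}{\lambda_i\bigl(1 - c - (c/z)b\bigr) - 1/z}.
\end{equation*}
Multiplying numerator and denominator by $z$, using $b = -z - z^2\tilde b$ to simplify $z(1-c) - cb = z(1 + cz\tilde b)$, and finally replacing $\lambda_i$ by $1/\gamma_i$ followed by the decomposition $\gamma_i/(\gamma_i - z(1+cz\tilde b)) = 1 + z(1+cz\tilde b)/(\gamma_i - z(1+cz\tilde b))$, yields \eqref{11} after dividing through by $z$.

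The main technical subtleties, rather than any single hard step, are (i) justifying that the support $\mathcal{S}$ is bounded away from $0$ so that both the reciprocal pushforward and the substitution $z\leftrightarrow 1/z$ in the Stieltjes transform are legitimate for all $z\in\mathbb{C}\setminus\mathbb{R}$ of interest, and (ii) arguing uniqueness of the solution $\tilde b$ to \eqref{11} within the class of Stieltjes transforms of probability measures supported in $\tilde{\mathcal{S}}$; the latter follows because the bijection $b\leftrightarrow \tilde b$ given by $b = -z - z^2\tilde b$ is an isomorphism of the two solution spaces, so uniqueness for \eqref{9} (which is the classical Marchenko--Pastur statement) transfers directly to \eqref{11}.
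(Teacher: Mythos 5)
Your proposal is correct and follows essentially the same route the paper indicates: the paper's proof simply cites \cite[Theorem 2.1]{Zheng2017CLT} and notes that the result can alternatively be derived from Theorem \ref{Theorem MP equation} via the relation \eqref{relation between bR and bM}, and your argument is exactly that second route carried out in detail (pushforward definition of $\tilde F_K$, transfer of the a.s.\ convergence through $b_{\hat{\mathbf{R}}}(1/z)=-z-z^2b_{\hat{\mathbf{M}}}(z)$, and the substitution $b=-z-z^2\tilde b$ into \eqref{9} at $1/z$ to obtain \eqref{11}). The algebraic identification and the uniqueness-transfer argument check out, so no gaps.
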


\begin{proof}
We note that this theorem has not been explicitly stated as an independent entity in the existing literature. Actually, this theorem is a direct corollary of \cite[Theorem 2.1]{Zheng2017CLT}, as the precision matrix $\mathbf{M}_N$ can be viewed as a particular instance of an F-matrix $\mathbf{F} = \mathbf{B}\mathbf{M}_N$ with  $\mathbf{B}$ being an identity matrix. In addition, it is worth mentioning that this theorem can be directly derived from Theorem \ref{Theorem MP equation} by utilizing the relationship between the Stieltjes transforms of the spectral densities of $\hat{\mathbf{R}}_N$ and $\hat{\mathbf{M}}_N$ (as shown in \eqref{relation between bR and bM}). 
\end{proof}

Next, we will present some relations that will be used in what follows. We first introduce the dual representation of $\hat{\mathbf{R}} $, defined by $\underline{\hat{\mathbf{R}}}= \mathbf{Y}^H\mathbf{Y}$, which plays an important role in RMT. We denote the Stieltjes transform of the empirical spectral distribution of $\underline{\hat{\mathbf{R}}}$ by $b_{\underline{\hat{\mathbf{R}}}}(z)$, which has the following relation with $b_{\hat{\mathbf{R}}}(z)$:
\begin{equation}\label{relation between bRunder and bR}
	b_{\underline{\hat{\mathbf{R}}}}(z) = c_Kb_{{\hat{\mathbf{R}}}}(z) -\left(1-c_K\right)\frac{1}{z}.
\end{equation}

From Theorem \ref{Theorem MP equation}, we easily check that $b_{\underline{\hat{\mathbf{R}}}}(z)$ almost surely converges to a Stieltjes transform, denoted by $\underline{b}_K(z)$, which has a relation with ${b}_K(z)$:
\begin{equation}\label{key}
	\underline{b}_K(z) = cb_K(z) -\left(1-c\right)\frac{1}{z}.
\end{equation}

From \eqref{relation between bR and bM} and \eqref{relation between bRunder and bR}, it is easy to obtain the following relation:
\begin{equation}\label{relation 1}
	b_{\underline{\hat{\mathbf{R}}}}\left(\frac{1}{z}\right) = -z\left(1+c_Kzb_{\hat{\mathbf{M}}}(z)\right).
\end{equation}
\begin{figure*}[ht]
	\centering
	\subfloat[]{
		\centering
		\includegraphics[trim=0.5cm 0 0 2cm, scale=0.42]{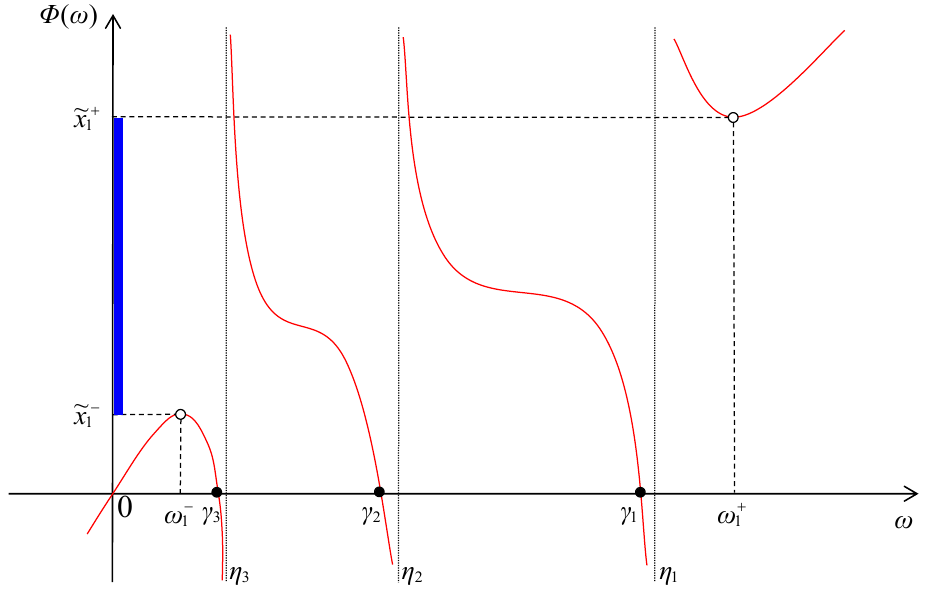}
	}
	\subfloat[]{
		\centering
		\includegraphics[trim=0.5cm 0 0 2cm, scale=0.42]{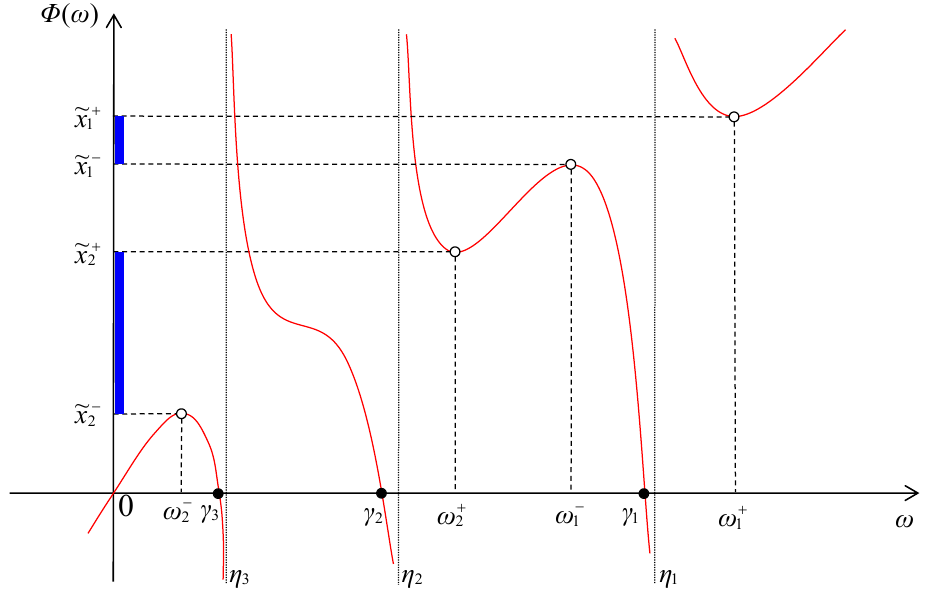}
	}
	\subfloat[]{
		\centering
		\includegraphics[trim=0.5cm 0 0 2cm, scale=0.42]{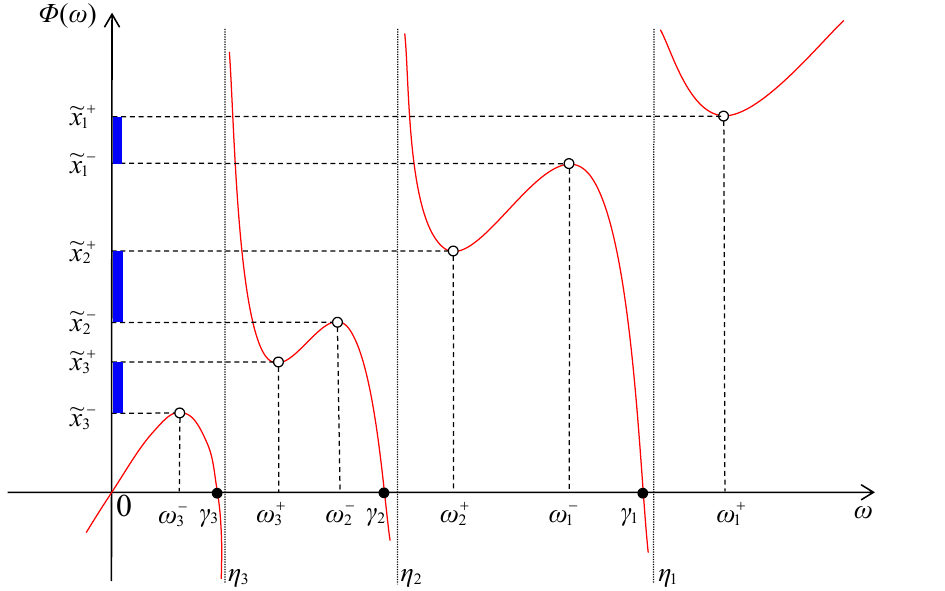}
	}\\
	\caption{When $L=3$, typical representations of $\Phi(\omega)$ as a function of $\omega$ for (a) $Q=1$, (b) $Q=2$ and (c) $Q=3$.}
	\label{typical representations of Phi}
\end{figure*}

From Theorem \ref{Theorem MP equation} and Theorem \ref{Theorem Weak Convergence of u_N}, we know that $b_{\underline{\hat{\mathbf{R}}}}\left(\frac{1}{z}\right)$ and $b_{\hat{\mathbf{M}}}(z) $ almost surely converge to $\underline{{b}}_K\left(\frac{1}{z}\right)$ and $\tilde{b}_K(z)$, respectively. Using these two convergences in \eqref{relation 1}, we obtain the following relation
\begin{equation}\label{relation 2}
	\underline{{b}}_K\left(\frac{1}{z}\right) = -z\left(1+cz\tilde{b}_K(z)\right).
\end{equation}

We emphasize that the relations \eqref{relation 1} and \eqref{relation 2} are very useful in the following derivations.

\subsubsection{The Supports of $F_K$ and $\tilde{F}_K$}
In order to obtain the limiting spectral measures $F_K$ and $\tilde{F}_K$, we need to solve \eqref{9} and \eqref{11} and then use the Stieltjes  inversion formula. Unfortunately, both of them  are explicitly solvable only in the case $\mathbf{R}_N=\sigma^2\mathbf{I}_N$. In other cases, we are unable to get the explicit expression for $F_K$ or $\tilde{F}_K$.

Fortunately, for the majority of problems that we are concerned with, it suffices to leverage certain properties of $F_K$ and $\tilde{F}_K$ rather than seeking their explicit representations. The supports of $F_K$ and $\tilde{F}_K$ (i.e., $\mathcal{S}$ and $\tilde{\mathcal{S}}$ respectively) are useful attributes for comprehending the behavior of $F_K$ and $\tilde{F}_K$. 

In the seminal paper \cite{Mestre2008On}, Mestre provided an exactly characterization of the support of $F_K$. He claimed that the support of $F_K$ can be divided into  $Q$ disjoint compacts, i.e., $\mathcal{S} = \bigcup_{k=1}^Q\mathcal{S}_k$ ( $1\leqslant Q\leqslant L$),  when a spectral separation condition holds true. In this paper, we consider the case $Q=L$. In this case, the spectral separation condition could be formulated as the following assumption:
\begin{itemize}
	\item (A4) ${\inf_{1\leqslant i\leqslant L-1}}\Psi(\bar{f}_i)>0$ where the function $\Psi(f)$ is define by 
	\begin{equation}\label{Psi(f)}
		\Psi(f) = 1-\frac{c}{N}\sum_{i=1}^LN_i\left(\frac{\lambda_i}{\lambda_i-f}\right)^2
	\end{equation}
	and $\bar{f}_i$ are the real-valued solutions to the equation $\Psi'(f) = 0$ at $f$ under convention $0<\bar{f}_1<\bar{f}_2<\cdots<\bar{f}_{L-1}$.
\end{itemize}

If assumption (A4) is satisfied, it holds true that $\mathcal{S} = \bigcup_{k=1}^L\mathcal{S}_k$ with $\mathcal{S}_k = (x_k^-,x_k^+)$ being associated with $\lambda_k$. For more details about $\mathcal{S}$, we refer the readers to Mestre's seminal papers \cite{Mestre2008On} and \cite{Mestre2008Improved}.

Next we will provide an explicit characterization of the support of $\tilde{F}_K$. We consider a map $z\mapsto \omega=z(1+cz{\tilde{b}_K(z)})$. Then using this map in \eqref{11}, one may find that \eqref{11} is equivalent to the equation
\begin{equation}\label{key}
	z = \Phi(\omega)
\end{equation}
where 
\begin{equation}\label{key}
	\Phi(\omega) = \frac{\omega}{\chi(\omega)}
\end{equation}
and 
\begin{equation}\label{key}
	\chi(\omega) = 1+\omega cb_{\mathbf{M}}(\omega)
\end{equation}
with $b_{\mathbf{M}}=\frac{1}{N}\sum_{i=1}^LN_i\frac{1}{\gamma_i-\omega}$ being the Stieltjes transform of the spectral density of $\mathbf{M}_N$.

In Fig. \ref{typical representations of Phi}, we depict three possible representations of $\Phi(\omega)$ as a function of $\omega$ at $L=3$. With reference to Fig. \ref{typical representations of Phi} and after a straightforward analysis, we may find that $\Phi(\omega)$ has the following properties.
\begin{itemize}
	\item \textit{Property 1:} $\Phi(w)$ has $L+1$ zeros: $\{\gamma_1,\gamma_2,\cdots,\gamma_L,0\}$.
	\item \textit{Property 2:} $\Phi(w)$ has $L$ poles, denoted by  $\eta_1>\eta_2>\cdots>\eta_L$, which are zeros of $\chi(\omega)$, i.e., $\chi(\eta_i)=0$ for $1\leqslant i\leqslant L$. 
	\item \textit{Property 3:} $\Phi(\omega)$ always has $2Q$ ($1\leqslant Q\leqslant L$) positive local extrema. If we denote their preimages by $\omega_i^-$, $\omega_i^{+}$, $1\leqslant i\leqslant Q$, which satisfy 
	$\omega_1^+>\omega_1^->\omega_2^+>\omega_2^->\cdots>\omega_Q^->\omega_Q^+>0$,  then $\omega_i^{\pm}$ are the real-valued solutions of equation $\xi(\omega)=0$ at $\omega$, where $\xi(\omega)$ is given by
	\begin{equation}\label{xi(w)}
		\begin{aligned}
			\xi(\omega) = 1-\frac{c}{N}\sum_{i=1}^LN_i\left(\frac{\omega}{\gamma_i-\omega}\right)^2.
		\end{aligned}
	\end{equation}
	\item \textit{Property 4:} Denote the interval $\mathcal{W}_i=(\omega_i^-,\omega_i^+)$, and denote $\mathcal{W} = \cup_{i=1}^Q\mathcal{W}_i$. Then the function $\Phi(\omega)$ is strictly increasing on $\mathbb{R}\setminus\mathcal{W}$, which leads to
	\begin{equation}\label{key}
		\tilde{x}_1^+>\tilde{x}_1^->\cdots>\tilde{x}_Q^+>\tilde{x}_Q^->0
	\end{equation}
	where $\tilde{x}_{i}^{\pm}=\Phi(\omega_i^{\pm})$ for all $i=1,2,...,Q$.
\end{itemize}

 From Fig. \ref{typical representations of Phi}, we observe that each eigenvalue $\gamma_k$ ($1\leqslant k\leqslant L$)  always locates at one, and only one of the intervals $\mathcal{W}_i$ for $1\leqslant i\leqslant Q$. However, it should be noted that this is not a one-to-one correspondence, since multiple consecutive eigenvalues of $\mathbf{M}_N$ may appear in the same interval  $\mathcal{W}_i$, as shown in Fig.\ref{typical representations of Phi}(a) and (b).

In this paper, we consider the situation of $Q=L$ (see Fig. \ref{typical representations of Phi}(c)), that is, each distinct eigenvalue $\gamma_i$ ($1\leqslant i\leqslant L$) of $\mathbf{M}_N$ locates in different interval $\mathcal{W}_i$ and hence the support of $\tilde{F}_K$, i.e., $\tilde{\mathcal{S}}$, will be divided into $L$ disjoint compacts. Now a question arises: how to guarantee that $\tilde{\mathcal{S}}$ is divided into $L$ disjoint compacts? From Fig. \ref{typical representations of Phi}(c), we can see that if the compact $\mathcal{W}$ is separated into $L$ different intervals, it is equivalent to ensure that each $\omega_i^{\pm}$ ($1\leqslant i \leqslant L$) is the local extreme of $\Phi(\omega)$. In other words, $\omega_i^{\pm}$ for $1\leqslant i \leqslant L$ are the zeros of $\xi(\omega)$.  In Fig. \ref{The function of derivative of Phi}, we depict the function $\xi(\omega)$. With reference to Fig. \ref{The function of derivative of Phi}, we formulate the exact spectral separation condition for $\tilde{F}_K$ in mathematical term:   
\begin{itemize}
	\item $\inf_{1\leqslant i\leqslant L-1}\xi(\bar{\omega}_i)>0$ where $\bar{\omega}_i$ for $1\leqslant i\leqslant L-1$ are the real-valued solutions to the equation $\xi'(\omega)=0$ at $\omega$ under convention $\bar{\omega}_1>\bar{\omega}_2>\cdots>\bar{\omega}_{L-1}$.
\end{itemize}

Observing the function $\Psi(\cdot)$ defined in \eqref{Psi(f)} and $\xi(\cdot)$ in \eqref{xi(w)}, and noticing the relation $\gamma_i = 1/\lambda_i$, it is easy to check that $\Psi(f) = \xi(1/f)$ and $\Psi'(f) =-\xi'(1/f)/f^2$. This implies $\bar{f}_i = 1/\bar{\omega}_i$ for $1\leqslant i\leqslant L-1$. Hence, we can conclude that the spectral separation condition for $\hat{\mathbf{M}}_N$ is equivalent to that of $\hat{\mathbf{R}}_N$.  In other words, the assumption (A4) also ensures the spectra separability of $\hat{\mathbf{M}}_N$. 

Now it is time to give an explicit characterization of the support of $\tilde{F}_K$: when assumption (A4) holds true, the support of $\tilde{F}_K$ is given by $\tilde{\mathcal{S}} =\cup_{k=1}^L\tilde{\mathcal{S}}_k $ with $\tilde{\mathcal{S}}_k  =(\tilde{x}_k^-,\tilde{x}_k^+) $ only being associated with $\gamma_k$. Furthermore, it is not difficult to find that $\tilde{\mathcal{S}}=\mathcal{S}^{-1}$, that is, for intervals $\tilde{\mathcal{S}}_k = (\tilde{x}_k^-,\tilde{x}_k^+)$ and $\mathcal{S}_k = (x_k^-,x_k^+)$, it holds true that $\tilde{x}_k^- = 1/x_k^+$ and $\tilde{x}_k^+ =1/x_k^-$.

\begin{figure}[]
	\centering
	\includegraphics[scale=0.5]{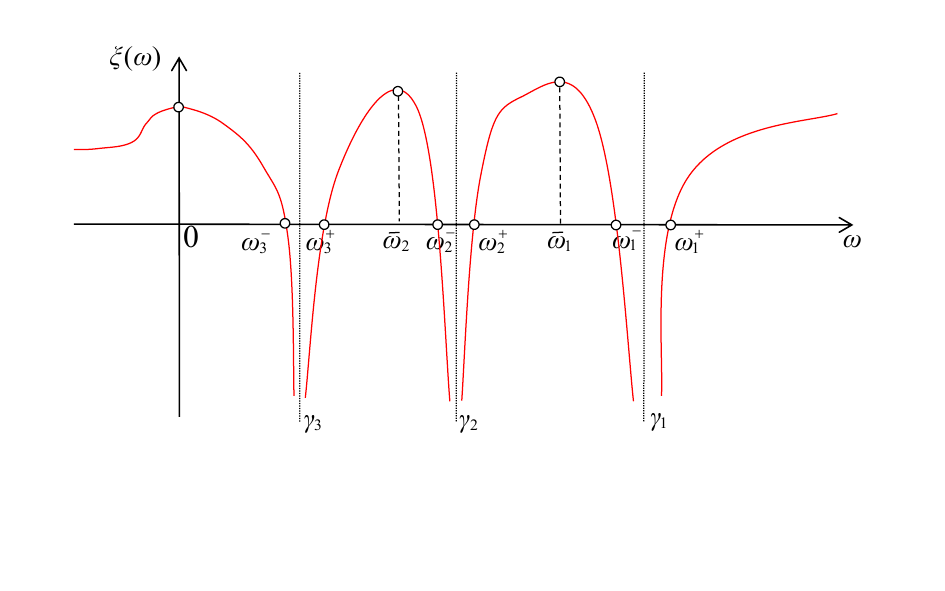}
	\caption{The function $\xi(\omega)$ at $Q=L=3$.}
	\label{The function of derivative of Phi}\vspace{-1em}
\end{figure}

\section{Main Results}
In this section, we present the main results of this paper. The main objective of this paper is to derive a consistent estimator for eigenvalues of $\mathbf{M}_N$. This improved estimator with its asymptotic properties are presented in Section III-B.   Before introducing this estimator, we first establish a preliminary theorem in Section III-A, which describes the convergence rate of the expected Stieltjes transform (with its derivative) of the spectra of $\hat{\mathbf{R}}_N$. We emphasize that this theorem serves as the cornerstone for accurately controlling the bias term of the proposed estimator.

\subsection{The Convergence Rates of Expected ${{b}_{\hat{\mathbf{R}}}}(z)$ and $b'_{\hat{\mathbf{R}}}(z)$}
As a preparatory step, we establish the following theorem to characterize the convergence rates of the expected ${{b}_{\hat{\mathbf{R}}}}(z)$ and ${{b}'_{\hat{\mathbf{R}}}}(z)$.  
\begin{mytheorem}\label{Theorem Convergence Rate of Expected Stieltjes Transform}
Under assumptions (A1)-(A3), it holds true that 
	\begin{align}
		&|Eb_{{\hat{\mathbf{R}}}}(z)-b_{K}(z)|\leqslant\frac{1}{K^2}P_9(|z|)P_{12}(|{\rm Im}z|^{-1})\label{Theorem Eb - b},
		\\&|Eb_{\hat{\mathbf{R}}}'(z) - b'_{K}(z)|\leqslant \frac{1}{K^2}P_{15}(|z|)P_{20}(|{\rm Im}z|^{-1})\label{Theorem Eb' - b'},
	\end{align}
for $z\in\mathbb{C}\setminus\mathbb{R}$ and all large $K$.
\end{mytheorem}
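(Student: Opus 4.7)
The plan is to establish \eqref{Theorem Eb - b} by deriving an \emph{approximate} Marcenko--Pastur fixed-point equation for $Eb_{\hat{\mathbf{R}}}(z)$ whose residual is of order $O(1/K^2)$, and then converting that residual into a pointwise bound on $|Eb_{\hat{\mathbf{R}}}(z)-b_K(z)|$ by a perturbative stability argument applied to \eqref{9}. The derivative bound \eqref{Theorem Eb' - b'} would then follow by rerunning exactly the same machinery with the resolvent $\mathbf{Q}(z)=(\hat{\mathbf{R}}_N-z\mathbf{I}_N)^{-1}$ replaced by $\mathbf{Q}^2(z)$, since $b_{\hat{\mathbf{R}}}'(z)=\frac{1}{N}{\rm tr}(\mathbf{Q}^2)$.

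The main tool is the complex Gaussian integration-by-parts (Stein) identity, available under (A3): for any sufficiently smooth $f$ one has $E[X_{ij}f(\mathbf{X})]=E[\partial f/\partial\bar{X}_{ij}]$. Starting from the resolvent identity $\hat{\mathbf{R}}_N\mathbf{Q}=\mathbf{I}+z\mathbf{Q}$, I would apply Stein to the entries of $\mathbf{Y}\mathbf{Y}^H\mathbf{Q}$ to obtain a first closed relation for $E[\mathbf{Q}]$ in terms of itself plus a residual. Crucially, instead of stopping at the customary $O(1/K)$ remainder, I would reapply Stein to that residual; the resulting $O(1/K)$ contributions cancel due to the vanishing pseudo-covariance of circularly symmetric complex Gaussians. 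After collecting the surviving terms the procedure should yield an identity of the form
\begin{equation*}
Eb_{\hat{\mathbf{R}}}(z)=\frac{1}{N}\sum_{i=1}^{L}N_i\frac{1}{\lambda_i(1-c_K-c_Kz\,Eb_{\hat{\mathbf{R}}}(z))-z}+\Delta(z),
\end{equation*}
where $|\Delta(z)|\le K^{-2}P(|z|)P(|{\rm Im}\,z|^{-1})$. The latter estimate would be verified term by term using the a priori bounds $\|\mathbf{Q}\|\le|{\rm Im}\,z|^{-1}$ and $\|\mathbf{R}_N^{1/2}\mathbf{Q}\mathbf{R}_N^{1/2}\|\le C|{\rm Im}\,z|^{-1}$, combined with Gaussian variance estimates for quadratic forms of the type $\mathbf{x}^H\mathbf{A}\mathbf{x}-{\rm tr}\,\mathbf{A}$.

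The passage from the approximate equation to the pointwise bound is a standard stability argument for \eqref{9}. Writing $g(b,z)=b-\frac{1}{N}\sum_{i=1}^{L}N_i[\lambda_i(1-c_K-c_Kzb)-z]^{-1}$, the implicit function theorem applied at the root $b=b_K(z)$ provided by Theorem \ref{Theorem MP equation} gives $|Eb_{\hat{\mathbf{R}}}(z)-b_K(z)|\le|\partial_b g(b_K,z)|^{-1}|\Delta(z)|$ up to higher-order corrections. What remains is to bound $|\partial_b g(b_K,z)|^{-1}$ by a polynomial in $|z|$ and $|{\rm Im}\,z|^{-1}$; this is a direct computation from the definition of $g$ together with the elementary estimate $|{\rm Im}\,b_K(z)|\le|{\rm Im}\,z|^{-1}$, and it accounts for part of the polynomial-degree inflation between $\Delta(z)$ and the final bound.

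The hard part will be the bookkeeping required to certify that every remainder is genuinely $O(K^{-2})$ rather than the generic $O(K^{-1})$: this forces carrying the Stein expansion one full order beyond what mere consistency would require, tracking the exact cancellation of the $O(K^{-1})$ terms peculiar to the circularly symmetric complex case, and uniformly controlling a family of higher-order resolvent moments such as $E|{\rm tr}(\mathbf{A}_1\mathbf{Q}\mathbf{A}_2\mathbf{Q}\cdots\mathbf{A}_p\mathbf{Q})|^q$ on $\mathbb{C}\setminus\mathbb{R}$. For \eqref{Theorem Eb' - b'} the same machinery is rerun with an additional factor of $\mathbf{Q}$ throughout, and the higher polynomial degrees there directly reflect the extra factor of $\|\mathbf{Q}\|\le|{\rm Im}\,z|^{-1}$ in each surviving term together with the derivative of the stability factor $|\partial_b g(b_K,z)|^{-1}$.
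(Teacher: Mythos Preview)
Your high-level strategy matches the paper's: obtain an approximate Marcenko--Pastur relation for $Eb_{\hat{\mathbf{R}}}(z)$ with an $O(K^{-2})$ residual, then convert this into a pointwise bound via the stability of \eqref{9}. The execution, however, differs in two places worth noting.

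First, for the $O(K^{-2})$ residual the paper does \emph{not} iterate Stein and track cancellations. Instead it follows the Dumont--Hachem--Loubaton scheme: introduce the deterministic equivalents $\mathbf{T}(z)$ (built from $b_K$) and $\mathbf{V}(z)$ (built from $Eb_{\hat{\mathbf{R}}}$), and show that the residual $\epsilon(z)=\frac{1}{K}{\rm tr}(E\mathbf{Q}(z)-\mathbf{V}(z))$ can be written as a covariance $E[\overset{\circ}{\varsigma}\cdot(\text{centered quantity})]$; Cauchy--Schwarz together with the Poincar\'e--Nash variance bound (Lemma~\ref{Lemma inequality}) then yields $|\epsilon(z)|\le K^{-2}P_2(|z|)P_6(|{\rm Im}z|^{-1})$. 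Your iterated-Stein route should also work, but the paper's approach replaces the delicate cancellation bookkeeping by a single variance estimate. For the stability step, the paper does not linearize at $b_K$; it writes $\tilde\beta-\tilde\alpha=\epsilon(z)/\bigl(1-\tfrac{z}{K}{\rm tr}(\mathbf{V}\mathbf{\Lambda}\mathbf{T})\bigr)$, a secant-type identity with \emph{both} $\mathbf{V}$ and $\mathbf{T}$ in the denominator, and then lower-bounds that denominator via a $2\times2$ linear-system argument (matrices $\mathbf{G}_1,\mathbf{G}_2$). This sidesteps the ``up to higher-order corrections'' caveat in your implicit-function-theorem formulation.

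Second, your plan for \eqref{Theorem Eb' - b'} has a gap. Replacing $\mathbf{Q}$ by $\mathbf{Q}^2$ is not enough: ${\rm tr}\,\mathbf{Q}^2$ does not satisfy a self-contained fixed-point equation of Marcenko--Pastur type, so there is nothing to ``rerun'' the stability argument against. The paper instead \emph{differentiates} the identity for $\tilde\beta-\tilde\alpha$ with respect to $z$, obtaining $\tilde\beta'-\tilde\alpha'=\bigl(u_4(\tilde\beta-\tilde\alpha)+\epsilon'(z)\bigr)/\bigl(1-\tfrac{z}{K}{\rm tr}(\mathbf{V}\mathbf{\Lambda}\mathbf{T})\bigr)$, and then separately bounds $|\epsilon'(z)|$ (Lemma~\ref{lemma bound of epsilon'(z)}) by rerunning the Poincar\'e--Nash machinery on quantities involving $\mathbf{Q}'=\mathbf{Q}^2$. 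The denominator is the \emph{same} one already controlled in the first part, which is why the derivative bound follows with only moderate degree inflation.
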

\begin{proof}
	See Appendix A in Supplementary Material.
\end{proof}
\textit{Remark 2}: We emphasize that \eqref{Theorem Eb - b} is a strengthen version of the result in \cite{Bai2004CLT}, which stated that $Eb_{{\hat{\mathbf{R}}}}(z)-{b}_{K}(z)$ is only an $O(K^{-1})$ term. 

\textit{Remark 3}: We note that \eqref{Theorem Eb - b} bears a resemblance to \cite[Lemma 8]{Vallet2012Improved} (see also \cite[Proposition 4]{Vallet2010ImprovedOnLine}), yet a crucial distinction lies in the differing assumptions made regarding the observation matrix $\mathbf{Y}$. In \cite{Vallet2012Improved}, $\mathbf{Y}$ is modeled as $\mathbf{B} + \mathbf{W}$, where $\mathbf{B}$ is a deterministic matrix containing the signals contribution and $\mathbf{W}$ is a complex Gaussian white noise matrix with i.i.d. elements having zero mean and variance $\sigma^2/K$. Conversely, we here assume that $\mathbf{Y} = \mathbf{R}_N^{1/2}\mathbf{X}$ with $\mathbf{R}_N$ being an arbitrary Hermitian matrix and $\mathbf{X}$ being a complex Gaussian matrix with i.i.d. elements having zero mean and variance $1/K$. Notably, this latter assumption is prevalent in Radar signal processing applications, such as adaptive detection \cite{Kelly1986An, Robey1992A, Orlando2022A} and filtering \cite{Rubio2012A, Ginolhac2013Performance}, where the target signal is usually surrounded by colored noise/clutter, and the noise/clutter covariance matrix is typically estimated using signal-free data (often referred to as the secondary data).

From Theorem \ref{Theorem Convergence Rate of Expected Stieltjes Transform}, we can get the following useful corollary.
\begin{corollary}\label{Corollaryb 1}
	Under the assumptions (A1)-(A3), it holds true that
\begin{align}
		&E\left[\left(b_{\hat{\mathbf{R}}}(z)-b_K(z)\right)^2\right]\leqslant \frac{1}{K^2}P_{18}(|z|)P_{24}(|{\rm Im}z|^{-1}), \label{E(bR-b)^2}
		\\& E\left[\left(b'_{\hat{\mathbf{R}}}(z)-b'_K(z)\right)^2\right]\leqslant \frac{1}{K^2}P_{30}(|z|)P_{40}(|{\rm Im}z|^{-1}),\label{E(bR'-b')^2}
\end{align}
for $z\in \mathbb{C}\setminus\mathbb{R}$ and all large $K$.
\end{corollary}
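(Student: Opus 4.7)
The proof splits $b_{\hat{\mathbf{R}}}(z) - b_K(z)$ into its centered fluctuation and deterministic bias:
\begin{equation*}
b_{\hat{\mathbf{R}}}(z) - b_K(z) = \bigl(b_{\hat{\mathbf{R}}}(z) - E b_{\hat{\mathbf{R}}}(z)\bigr) + \bigl(E b_{\hat{\mathbf{R}}}(z) - b_K(z)\bigr).
\end{equation*}
Since the cross term vanishes in expectation, this yields
\begin{equation*}
E\bigl[(b_{\hat{\mathbf{R}}}(z) - b_K(z))^2\bigr] \leq \mathrm{Var}\bigl(b_{\hat{\mathbf{R}}}(z)\bigr) + \bigl|E b_{\hat{\mathbf{R}}}(z) - b_K(z)\bigr|^2,
\end{equation*}
and analogously for $b'_{\hat{\mathbf{R}}}$. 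The squared bias is controlled directly by squaring \eqref{Theorem Eb - b}, producing $K^{-4}P_{18}(|z|)P_{24}(|\mathrm{Im}\,z|^{-1})$, which is absorbed into the target $K^{-2}P_{18}(|z|)P_{24}(|\mathrm{Im}\,z|^{-1})$ rate since $K^{-2}\leq 1$.

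The main task is then to bound the variance by $K^{-2}$ times a polynomial of modest degree. Under (A3), $\mathbf{X}$ has i.i.d.\ complex Gaussian entries of variance $1/K$, so the Nash--Poincar\'e inequality applies: for any $\mathcal{C}^1$ functional $f$,
\begin{equation*}
\mathrm{Var}\bigl(f(\mathbf{X})\bigr) \leq \frac{1}{K}\sum_{i,j}\Bigl(E\bigl|\partial_{X_{ij}} f\bigr|^2 + E\bigl|\partial_{\bar{X}_{ij}} f\bigr|^2\Bigr).
\end{equation*}
Setting $\mathbf{Q}(z) = (\hat{\mathbf{R}}_N - z\mathbf{I})^{-1}$ and using $\partial_{X_{ij}} \mathbf{Q} = -\mathbf{Q}\mathbf{R}_N^{1/2}\mathbf{E}_{ij}\mathbf{X}^H\mathbf{R}_N^{1/2}\mathbf{Q}$ (with $\mathbf{E}_{ij}$ the canonical unit), the derivatives of $b_{\hat{\mathbf{R}}}(z) = N^{-1}\mathrm{tr}\,\mathbf{Q}(z)$ evaluate in closed form; summing their squared moduli gives $N^{-2}\mathrm{tr}\bigl(\mathbf{R}_N^{1/2}\mathbf{Q}^2\mathbf{R}_N\mathbf{Q}^{2H}\mathbf{R}_N^{1/2}\mathbf{X}\mathbf{X}^H\bigr)$. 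Invoking the deterministic bounds $\|\mathbf{Q}(z)\| \leq |\mathrm{Im}\,z|^{-1}$ and $\|\mathbf{R}_N\| \leq \lambda_{\max}$ (from (A1)), together with $E\,\mathrm{tr}(\mathbf{X}\mathbf{X}^H) = N$, yields $\mathrm{Var}(b_{\hat{\mathbf{R}}}(z)) = O\bigl(K^{-2}|\mathrm{Im}\,z|^{-4}\bigr)$; the conjugate derivatives contribute at the same order. Combined with the squared bias, this produces \eqref{E(bR-b)^2}.

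The derivative statement \eqref{E(bR'-b')^2} is handled by the same scheme with $b'_{\hat{\mathbf{R}}}(z) = N^{-1}\mathrm{tr}\,\mathbf{Q}(z)^2$. Each differentiation inside the trace now spawns one additional resolvent factor, so the Poincar\'e computation carries $\|\mathbf{Q}\|^3$ in place of $\|\mathbf{Q}\|^2$, giving $\mathrm{Var}(b'_{\hat{\mathbf{R}}}(z)) = O\bigl(K^{-2}|\mathrm{Im}\,z|^{-6}\bigr)$. The squared bias contribution from \eqref{Theorem Eb' - b'} equals $K^{-4}P_{30}(|z|)P_{40}(|\mathrm{Im}\,z|^{-1})$ and dominates the polynomial degrees, so the sum fits comfortably inside $K^{-2}P_{30}(|z|)P_{40}(|\mathrm{Im}\,z|^{-1})$.

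The principal obstacle is pure bookkeeping: one has to verify that the crude bounds on $\|\mathbf{Q}\|$, $\|\mathbf{R}_N\|$, and moments of $\mathrm{tr}(\mathbf{X}\mathbf{X}^H)$ produce polynomial degrees no larger than $P_{18}/P_{24}$ (respectively $P_{30}/P_{40}$). Since the squared bias from Theorem~\ref{Theorem Convergence Rate of Expected Stieltjes Transform} already attains these degrees, the variance's much milder $|\mathrm{Im}\,z|^{-4}$ (resp.\ $|\mathrm{Im}\,z|^{-6}$) contribution is absorbed, and the advertised bounds follow without any delicate estimate.
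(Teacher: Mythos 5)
Your proposal is correct and follows essentially the same route as the paper: the paper likewise expands $E[(b_{\hat{\mathbf{R}}}-b_K)^2]=\mathrm{Var}(b_{\hat{\mathbf{R}}})+(Eb_{\hat{\mathbf{R}}}-b_K)^2$, absorbs the squared bias from Theorem~\ref{Theorem Convergence Rate of Expected Stieltjes Transform} at rate $K^{-4}$ into $K^{-2}$, and controls the variance via a Poincar\'e--Nash bound (its Lemmas in Appendices D--E), treating the derivative case identically. Your only cosmetic deviation is carrying out the Poincar\'e--Nash computation directly in the entries of $\mathbf{X}$ rather than citing the paper's pre-established variance lemmas for $\frac{1}{K}\mathrm{tr}(\mathbf{C}_1\mathbf{Q}\mathbf{C}_2)$ and its derivative, which changes nothing substantive.
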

\begin{proof}
	See Appendix F in Supplementary Material.
\end{proof}

 We emphasize that, similar to \cite{Vallet2012Improved}, the results presented in Theorem \ref{Theorem Convergence Rate of Expected Stieltjes Transform} and Corollary \ref{Corollaryb 1} are only valid in the complex Gaussian case. This limitation arises because the proofs of these results employ the Poincare-Nash inequality (see e.g., \cite[Lemma 7]{Vallet2012Improved}) to rigorously control the bounds of the variance.  Notably, this mathematical tool is tailored specifically for the complex Gaussian scenario. 
\subsection{Improved Estimator with Its Asymptotic Behavior}
\subsubsection{Proposed estimator and its first-order asymptotic property}In the following theorem, we introduce the proposed estimator and present its first-order asymptotic property.
\begin{mytheorem}\label{Theorem first-order behavior}
	Consider the following estimator 
	\begin{equation}\label{proposed estimator}
		\breve{\gamma}_m = \frac{1}{N_m}\sum_{k\in\mathcal{N}_m}\hat{\rho}_k\left(1-c_K-\frac{c_K}{N}\sum_{i=1\ne k}^N\frac{\hat{\rho}_i}{\hat{\rho}_k-\hat{\rho}_i}\right)
	\end{equation}
	for all $1\leqslant m\leqslant L$, where $\mathcal{N}_m$ denotes the set of indices
	\begin{equation}\label{key}
		\mathcal{N}_m = \left\{\sum_{i=1}^{m-1}N_i+1,\sum_{i=1}^{m-1}N_i+2,...,\sum_{i=1}^mN_i\right\}.
	\end{equation}
	
	Then under the assumptions (A1)-(A4), $\breve{\gamma}_m$ is a consistent estimator of $\gamma_m$, i.e., the following convergence holds true:
	\begin{equation}\label{a.s. convergence of estimator}
		|\breve{\gamma}_m-\gamma_m|\overset{a.s.}{\longrightarrow}0.
	\end{equation}
 Moreover, the bias term of $\breve{\gamma}_m$ is $O(K^{-2})$ term, i.e.,
	\begin{equation}\label{bias term of estiamtor}
		|E\breve{\gamma}_m - \gamma_m|=O\left(\frac{1}{K^2}\right)
	\end{equation}
	for all $1\leqslant m\leqslant L$.
\end{mytheorem}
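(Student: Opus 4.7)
The plan is to recast $\breve{\gamma}_m$ as a contour integral in $b_{\hat{\mathbf{M}}}(z)$, prove consistency by inserting the a.s.\ limit from Theorem~\ref{Theorem Weak Convergence of u_N}, and control the bias through Theorem~\ref{Theorem Convergence Rate of Expected Stieltjes Transform} together with Corollary~\ref{Corollaryb 1}. First I would use the identity $\hat{\rho}_i/(\hat{\rho}_k-\hat{\rho}_i) = -1 + \hat{\rho}_k/(\hat{\rho}_k - \hat{\rho}_i)$ to simplify the bracket in \eqref{proposed estimator} to $1 - c_K/N - (c_K\hat{\rho}_k/N)\sum_{i\ne k}(\hat{\rho}_k - \hat{\rho}_i)^{-1}$. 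Assumption (A4) combined with the no-eigenvalue-outside-support property for $\hat{\mathbf{R}}_N$, transferred to $\hat{\mathbf{M}}_N$ via $\hat{\rho}_i = 1/\hat{\sigma}_i$, ensures that with probability one and for all large $K$, the set $\{\hat{\rho}_k : k\in\mathcal{N}_m\}$ consists exactly of those sample eigenvalues enclosed by a fixed contour $\tilde{\mathcal{C}}_m$ surrounding only $\tilde{\mathcal{S}}_m$. Applying Cauchy's residue theorem to $-Nb_{\hat{\mathbf{M}}}(z) = \sum_i(z-\hat{\rho}_i)^{-1}$ for the linear sum $\sum_k \hat{\rho}_k$, and to $(Nb_{\hat{\mathbf{M}}}(z))^2$ for the quadratic cross-sum $\sum_k\hat{\rho}_k^2 \sum_{i\ne k}(\hat{\rho}_k-\hat{\rho}_i)^{-1}$, produces the representation
\begin{equation*}
\breve{\gamma}_m = -\frac{N}{2\pi\mathsf{i} N_m}\oint_{\tilde{\mathcal{C}}_m} z b_{\hat{\mathbf{M}}}(z)\!\left[1 + \frac{c_K z b_{\hat{\mathbf{M}}}(z)}{2}\right]\!dz.
\end{equation*}

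For \eqref{a.s. convergence of estimator}, I would invoke Theorem~\ref{Theorem Weak Convergence of u_N} to replace $b_{\hat{\mathbf{M}}}(z)$ by $\tilde{b}_K(z)$ uniformly on the compact contour $\tilde{\mathcal{C}}_m$. The change of variable $z = \Phi(\omega)$ from Section~II-B maps $\tilde{\mathcal{C}}_m$ onto a contour $\mathcal{D}_m$ enclosing only $\gamma_m$. Since $\omega = z(1 + cz\tilde{b}_K(z))$ gives $z\tilde{b}_K(z) = (\omega - z)/(cz)$ and $1 + cz\tilde{b}_K(z)/2 = (\omega + z)/(2z)$, the limiting integrand simplifies to $(\omega^2 - z^2)/(2cz^2)$; combined with $\Phi'(\omega) = (\chi - \omega\chi')/\chi^2$, $\omega/\Phi(\omega) = \chi(\omega)$, and the identity $\chi(\omega) - \omega\chi'(\omega) = 1 - c\omega^2 b_{\mathbf{M}}'(\omega)$, the integral collapses to $\frac{N}{4\pi\mathsf{i} N_m}\oint_{\mathcal{D}_m}\omega^2 b_{\mathbf{M}}'(\omega)d\omega$. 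Only the $i=m$ term of $b_{\mathbf{M}}'(\omega) = \frac{1}{N}\sum_i N_i(\gamma_i - \omega)^{-2}$ contributes; the double-pole residue at $\omega = \gamma_m$ equals $2\gamma_m N_m/N$, yielding the limit $\gamma_m$.

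For the bias \eqref{bias term of estiamtor}, I would take expectation inside the contour representation to obtain
\begin{equation*}
E\breve{\gamma}_m - \gamma_m = -\frac{N}{2\pi\mathsf{i} N_m}\oint_{\tilde{\mathcal{C}}_m} z\!\left[(Eb_{\hat{\mathbf{M}}} - \tilde{b}_K) + \frac{c_K z}{2}(Eb_{\hat{\mathbf{M}}}^2 - \tilde{b}_K^2)\right]\!dz,
\end{equation*}
using that the deterministic integrand already produces $\gamma_m$. The linear deviation $Eb_{\hat{\mathbf{M}}} - \tilde{b}_K$ is $O(K^{-2})$ uniformly on $\tilde{\mathcal{C}}_m$: via \eqref{relation between bR and bM} and the substitution $u = 1/z$, which carries $\tilde{\mathcal{C}}_m$ to a contour enclosing $\mathcal{S}_m = \tilde{\mathcal{S}}_m^{-1}$, this reduces to $Eb_{\hat{\mathbf{R}}}(u) - b_K(u)$ and is bounded by Theorem~\ref{Theorem Convergence Rate of Expected Stieltjes Transform}. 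For the quadratic term, decompose
\begin{equation*}
Eb_{\hat{\mathbf{M}}}^2 - \tilde{b}_K^2 = (Eb_{\hat{\mathbf{M}}} - \tilde{b}_K)(Eb_{\hat{\mathbf{M}}} + \tilde{b}_K) + \mathrm{Var}(b_{\hat{\mathbf{M}}});
\end{equation*}
the first piece is $O(K^{-2})$ by the linear bound, and the second is $O(K^{-2})$ by Corollary~\ref{Corollaryb 1} (again after transfer to $\hat{\mathbf{R}}$). Because $\tilde{\mathcal{C}}_m$ sits at a fixed positive distance from $\mathbb{R}$ and has bounded length, the polynomial factors $P_{\cdot}(|z|)P_{\cdot}(|\mathrm{Im}\,z|^{-1})$ from Theorem~\ref{Theorem Convergence Rate of Expected Stieltjes Transform} and Corollary~\ref{Corollaryb 1} are uniformly bounded, so the $O(K^{-2})$ rate survives integration.

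The principal obstacle is the quadratic-in-$b_{\hat{\mathbf{M}}}$ term in the contour integrand: Theorem~\ref{Theorem Convergence Rate of Expected Stieltjes Transform} alone would only yield $O(K^{-1})$ after squaring, so Corollary~\ref{Corollaryb 1} is indispensable for dominating $\mathrm{Var}(b_{\hat{\mathbf{M}}})$ at the same $O(K^{-2})$ order. A secondary subtlety is the substitution $z = \Phi(\omega)$: since $\tilde{b}_K$ is only implicitly defined by \eqref{11}, a direct residue computation in the $z$-plane is intractable, whereas in the $\omega$-plane the integrand becomes an explicit rational function whose residue at $\gamma_m$ is straightforward.
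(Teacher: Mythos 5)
Your treatment of the consistency claim is essentially correct and is, in substance, the paper's own argument run in the reverse direction: your integrand $-zb_{\hat{\mathbf{M}}}(z)\bigl(1+\tfrac{c_Kzb_{\hat{\mathbf{M}}}(z)}{2}\bigr)$ differs from the paper's $\hat g(z)$ in \eqref{hat g} only by exact differentials (of $z^2b_{\hat{\mathbf{M}}}^2$ and $z^3b_{\hat{\mathbf{M}}}^2$), your residues do reproduce \eqref{proposed estimator}, and your $\omega$-plane evaluation of the limit is legitimate because the awkward piece $(\chi-\omega\chi')/\chi^2=\Phi'(\omega)$ is an exact derivative of a single-valued rational function, so the zeros of $\chi$ lying inside $\mathcal{W}_m$ contribute nothing; combined with the exact-separation input from \cite{Bai1998No}, \cite{Bai1999Exact} and dominated convergence this yields \eqref{a.s. convergence of estimator} just as in the paper's Lemma \ref{Lemma gamma}.

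The bias claim \eqref{bias term of estiamtor}, however, has a genuine gap. Your contour $\tilde{\mathcal{C}}_m$ must enclose the cluster $\tilde{\mathcal{S}}_m\subset\mathbb{R}$ (it contains the real sample eigenvalues $\hat\rho_k$, $k\in\mathcal{N}_m$), so it necessarily crosses the real axis at two points outside the support; it cannot ``sit at a fixed positive distance from $\mathbb{R}$.'' Consequently the bounds of Theorem \ref{Theorem Convergence Rate of Expected Stieltjes Transform} and Corollary \ref{Corollaryb 1}, which have the form $K^{-2}P_a(|z|)P_b(|{\rm Im}z|^{-1})$ with $P_b$ of high degree, are \emph{not} uniformly bounded on $\tilde{\mathcal{C}}_m$: near the crossing points $|{\rm Im}z|^{-1}\to\infty$, and integrating $|{\rm Im}z|^{-b}$ along a segment through the axis diverges for $b\geqslant 1$, so ``the $O(K^{-2})$ rate survives integration'' does not follow; the same issue undermines the interchange of $E$ with the contour integral, since Theorem \ref{Theorem Convergence Rate of Expected Stieltjes Transform} gives no control of $Eb_{\hat{\mathbf{R}}}$ at real arguments. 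This is precisely where the paper needs extra machinery: it collapses the rectangle onto the real axis, writing $E\breve\gamma_m-\gamma_m$ as the limit $y\to0^+$ of integrals over horizontal segments at heights $\pm y$ (as in \cite{Mestre2008Improved}), and then invokes the Haagerup--Thorbjornsen-type result (Lemma 9 of \cite{Vallet2012Improved}, see also \cite{Haagerup2005A}, \cite{Capitaine2007Strong}), which guarantees that a holomorphic function obeying a bound $K^{-2}P(|z|)P(|{\rm Im}z|^{-1})$ still satisfies $\lim_{y\to0^+}\int_{t_1}^{t_2}|\cdot|\,dx=O(K^{-2})$. Your decomposition of the deviation (linear term via Theorem \ref{Theorem Convergence Rate of Expected Stieltjes Transform}; quadratic term via $(Eb-\tilde b)(Eb+\tilde b)+{\rm Var}$ and Corollary \ref{Corollaryb 1}) mirrors the paper's $\nu_1,\nu_2,\nu_3$ splitting and is the right idea, but without the collapsing-contour/Haagerup--Thorbjornsen step (or an equivalent device controlling the integrand near the real-axis crossings) the proof of the $O(K^{-2})$ bias is incomplete.
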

\begin{proof}
The proof  is deferred to Section IV-A. 
\end{proof}

\textit{Remark 4:} Theorem \ref{Theorem first-order behavior} states that the estimator $\breve{\gamma}_m$ is consistent under large dimensional regime. Actually, this consistency holds true even if $c\to 0$. In other words, the estimator $\breve{\gamma}_m$ is also consistent under the large sample size case.

\textit{Remark 5:} Notice that the assumption (A4) is quite strict and can be relaxed to some extent. For example, if only the cluster associated with $\gamma_m$ ($m\in\{1,2,...,L\}$) meets the spectral separation condition, then both the convergence $|\breve{\gamma}_m-\gamma_m|\overset{a.s.}{\longrightarrow}0$ and the convergence rate $O(1/K^2)$ still hold true. From \cite{Mestre2008On} and the arguments in Section II, we know that the spectral separation condition for $\gamma_m$ can be formulated as $\inf_K\phi_K(m)>0$ with $\phi_K(m)$ given by
		\begin{equation}\label{key}
			\phi_K(m) = \begin{cases}
				\Psi(\bar{f}_1), & m=1 \\
				\max\{\Psi\left(\bar{f}_{m-1}\right),\Psi\left(\bar{f}_{m}\right)\}, & 1<m<L \\
				\Psi(\bar{f}_{L-1}),& m=L
			\end{cases}
		\end{equation}
		where $\Psi(f)$ is defined in \eqref{Psi(f)}; $\bar{f}_i$ are the real-valued solutions to the equation $\Psi'(f) = 0$ at $f$ under convention $0<\bar{f}_1<\bar{f}_2<\cdots<\bar{f}_{L-1}$. 
	

\subsubsection{The second-order asymptotic behavior}
In the following theorem, we establish a CLT to describe the asymptotic fluctuation of the new estimator $\breve{\gamma}_m$ ($1\leqslant m\leqslant L$).
\begin{mytheorem}\label{Theorem Second-order asymptotic behavior}
	Let assumptions (A1)-(A4) hold true. Define the vectors $\breve{\bm{\gamma}} = [\breve{\gamma}_1,\breve{\gamma}_2,\cdots,\breve{\gamma}_L]^T$ and ${\bm \gamma} = [{\gamma}_1,{\gamma}_2,\cdots,{\gamma}_L]^T$. Then it holds true that
	\begin{equation}\label{Asymptotic Gaussianity}
		K\left(\breve{\bm{\gamma}} - {\bm \gamma}\right)\overset{d}{\rightarrow} \mathcal{N}_L \left(\mathbf{0}, \mathbf{\Theta } \right)
	\end{equation}
	where $\mathcal{N}_L \left(\mathbf{0}, \mathbf{\Theta }\right)$ denotes a real $L$-dimensional Gaussian distribution with mean $\mathbf{0}$ and covariance matrix $\mathbf{\Theta } \in \mathbb{R}^{L\times L}$. The $(m, n)$th element of $\mathbf{\Theta }$ is given by 
		\begin{equation}\label{28}
			\small\begin{aligned}
				\mathbf{\Theta }_{mn} =  -\frac{1}{4\pi^2c_mc_n}\oint_{{\Gamma}_m^+}\oint_{{\Gamma}_n^{'+}}\frac{{\underline{b}}_K\left(\frac{1}{z_1}\right){\underline{b}}_K\left(\frac{1}{z_2}\right)}{z_1^2z_2^2}\kappa\left(\frac{1}{z_1},\frac{1}{z_2}\right)dz_1dz_2
			\end{aligned}
		\end{equation}
	where the function $\kappa(z_1,z_2)$ is given by
	\begin{equation}\label{kappa}
		\kappa(z_1,z_2) = 	\frac{{\underline{b}}'_K(z_1){\underline{b}}'_K(z_2)}{\left({\underline{b}}_K(z_1)-{\underline{b}}_K(z_2)\right)^2}-\frac{1}{(z_1-z_2)^2}
	\end{equation}
	and the contours $\Gamma_{m}^+$ and $\Gamma_{n}^{'+}$ are positively oriented rectangles, symmetric with respect to real axis, and encloses the cluster $\tilde{\mathcal{S}}_m$ and $\tilde{\mathcal{S}}_n$, respectively. In addition, they verify
	\begin{equation}
		\label{30}
		\begin{aligned}
			&	\Gamma^{+}_{m} \cap \Gamma^{+}_{n} = 	\Gamma^{'+}_{m}\cap \Gamma^{'+}_{n} = \emptyset,&\text{\rm for $m\ne n$} \\
			&\Gamma^{+}_{m}\cap\Gamma^{'+}_{n}=\emptyset,	&\text{\rm for all $m, n$} 
		\end{aligned}
	\end{equation}
	and the families $\Gamma^{+}_{m}$ and $\Gamma^{'+}_{m}$ ($1\leqslant m\leqslant L$) are non-overlapping.
\end{mytheorem}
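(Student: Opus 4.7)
My plan follows the Mestre--Vallet paradigm for CLTs of this type. The first move is to represent $\breve{\gamma}_m$ as a contour integral. Using the algebraic identity $\hat{\rho}_k\hat{\rho}_i/(\hat{\rho}_k-\hat{\rho}_i)=1/(\hat{\sigma}_i-\hat{\sigma}_k)$ one can rewrite the estimator purely in terms of the SCM eigenvalues. By the exact separation of eigenvalues \cite{Bai1999Exact} combined with assumption (A4), for all large $K$ the indices $k\in\mathcal{N}_m$ correspond almost surely to sample eigenvalues sitting in a single cluster, so one can pick disjoint positively-oriented contours $\Gamma_m^{+}$ around $\tilde{\mathcal{S}}_m$ and, with the substitution $z\mapsto 1/z$ and the relations \eqref{relation between bR and bM}, \eqref{relation 1}, express $\breve{\gamma}_m$ as a contour integral whose integrand is a rational expression in $b_{\hat{\mathbf{R}}}(1/z)$ and $b'_{\hat{\mathbf{R}}}(1/z)$. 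The derivative enters because the $i\ne k$ exclusion in the inner sum prevents collapsing it into a Stieltjes transform at a single point. Substituting $b_{\hat{\mathbf{R}}}=b_K+(b_{\hat{\mathbf{R}}}-b_K)$ and $b'_{\hat{\mathbf{R}}}=b'_K+(b'_{\hat{\mathbf{R}}}-b'_K)$ and expanding, the zeroth-order term yields exactly $\gamma_m$ (from the unique interior pole of $b_K$ produced by the reciprocal map applied to $\tilde{\mathcal{S}}_m$), the first-order term is a linear contour functional of $(b_{\hat{\mathbf{R}}}-b_K)$ and $(b'_{\hat{\mathbf{R}}}-b'_K)$, and the remainder is at least quadratic in these fluctuations; by Corollary~\ref{Corollaryb 1} the remainder is $O_{L^2}(1/K^2)$, hence $o_{\mathbb{P}}(1)$ after multiplication by $K$ and drops out of the limit.

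With the problem reduced to a linear contour functional of $(b_{\hat{\mathbf{R}}}-b_K)$ and $(b'_{\hat{\mathbf{R}}}-b'_K)$, I would invoke the Bai--Silverstein CLT for linear spectral statistics of the SCM in the complex Gaussian case \cite{Bai2004CLT} (together with the multivariate extension implicit in \cite{Zheng2017CLT}): this furnishes joint weak convergence of $K(b_{\hat{\mathbf{R}}}(z_\ell)-Eb_{\hat{\mathbf{R}}}(z_\ell))$ at different $z$'s (and of its derivative) to a centered complex Gaussian process whose covariance is the kernel $\kappa(z_1,z_2)$ in \eqref{kappa}. Theorem~\ref{Theorem Convergence Rate of Expected Stieltjes Transform} is precisely what allows the replacement of $Eb_{\hat{\mathbf{R}}}$ by $b_K$ at this stage: the centering differences $Eb_{\hat{\mathbf{R}}}-b_K$ and $Eb'_{\hat{\mathbf{R}}}-b'_K$ are both $O(1/K^2)$ uniformly on the contour, so after multiplication by $K$ they are $O(1/K)$ and vanish, leaving a zero-mean Gaussian limit consistent with the $O(1/K^2)$ bias already asserted in Theorem~\ref{Theorem first-order behavior}. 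To assemble the covariance, I would substitute $\kappa$ into the double contour integral (one contour for $m$, one for $n$), change variable $z\mapsto 1/z$ back to SMI-space using $\underline{b}_K(1/z)=-z(1+cz\tilde{b}_K(z))$ from \eqref{relation 2}, and read off the weights $\underline{b}_K(1/z_i)/z_i^2$ and the prefactor $1/(4\pi^2 c_m c_n)$ (the $1/(c_mc_n)$ arising from the $N_mN_n$ normalization together with $c_m=\lim N_m/K$). The contour constraints in \eqref{30} are exactly what one needs in order that the $1/(z_1-z_2)^2$ singularity inside $\kappa$ not contribute spurious residues.

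I expect Step~1 (the contour representation of the double sum) to be the main technical hurdle: the exclusion $i\ne k$ forces the appearance of $b'_{\hat{\mathbf{R}}}$ in addition to $b_{\hat{\mathbf{R}}}$, so $\breve{\gamma}_m$ is strictly more than a linear spectral statistic of the SCM and both objects must be controlled simultaneously. This is what makes Theorem~\ref{Theorem Convergence Rate of Expected Stieltjes Transform} essential and genuinely sharper than the $O(1/K)$ bound of \cite{Bai2004CLT}: the $O(1/K^2)$ rate for \emph{both} $Eb_{\hat{\mathbf{R}}}-b_K$ and $Eb'_{\hat{\mathbf{R}}}-b'_K$ is exactly the margin needed for the rescaled bias to vanish and for the CLT to center at zero. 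Once that representation is in hand, the remaining steps are largely mechanical: a Taylor expansion, an off-the-shelf CLT for linear spectral statistics of a complex Gaussian SCM, and a change of variable in the double contour integral to cast the answer in the form \eqref{28}.
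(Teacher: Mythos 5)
Your overall strategy (contour representation of $\breve{\gamma}_m$, a CLT for the Stieltjes-transform process along the contour, then a covariance computation) is the same paradigm the paper follows, but the scaffolding differs in ways worth noting. The paper never linearizes and never uses Theorem \ref{Theorem Convergence Rate of Expected Stieltjes Transform} or Corollary \ref{Corollaryb 1} in this proof: starting from the exact identity \eqref{hat gamma_m - gamma_m} it writes $K(\breve{\gamma}_m-\gamma_m)$ \emph{exactly} as a contour functional $\Delta_K$ of $(M_K,M'_K,u'_K)$, where $M_K=K(b_{\underline{\hat{\mathbf{R}}}}-\underline{b}_K)$, $M'_K=K(b'_{\underline{\hat{\mathbf{R}}}}-\underline{b}'_K)$ and the random coefficient $u'_K=b'_{\underline{\hat{\mathbf{R}}}}$ absorbs what you treat as a quadratic remainder; it then applies the joint process CLT of Lemma \ref{convergence of process M and diff of M} (Yao et al.), which is already centered at the deterministic equivalent and already includes the derivative process with covariances given by $\partial\kappa$ and $\partial^2\kappa$, followed by the continuous mapping theorem and a Riemann-sum argument for Gaussianity of the limit. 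Your route (center at $Eb_{\hat{\mathbf{R}}}$, recenter via Theorem \ref{Theorem Convergence Rate of Expected Stieltjes Transform}, Taylor-expand and discard the quadratic term) can be made to work, but your claim that Theorem \ref{Theorem Convergence Rate of Expected Stieltjes Transform} is ``essential'' for the CLT misreads its role: in the paper it is needed only for the $O(1/K^2)$ bias statement, not for the Gaussian limit.

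Three points in your plan need repair. First, kernel bookkeeping: $\kappa$ in \eqref{kappa} is the limiting covariance of the fluctuations of the \emph{companion} transform, i.e.\ of $K(b_{\underline{\hat{\mathbf{R}}}}-\underline{b}_K)$, not of $K(b_{\hat{\mathbf{R}}}-Eb_{\hat{\mathbf{R}}})$; by \eqref{relation between bRunder and bR} the two fluctuation processes differ by a factor $c_K$, so attributing $\kappa$ to the latter is off by a factor $c^2$ and would propagate into \eqref{28}. Second, your control of the quadratic remainder by Corollary \ref{Corollaryb 1} is not immediate: those bounds blow up polynomially in $|{\rm Im}z|^{-1}$ while $\Gamma_m^+$ is a rectangle crossing the real axis, so you would need either the Haagerup--Thorbj{\o}rnsen-type argument (Lemma 9 of \cite{Vallet2012Improved}, which the paper invokes only for the bias) or, more simply, tightness of $(M_K,M'_K)$ on the contour, which makes the remainder $O_P(1/K)$ directly; the paper's exact decomposition avoids the issue altogether. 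Third, the covariance is not obtained by ``reading off'' weights after a change of variables: the raw limiting covariance is \eqref{covariance}, involving $\zeta(z_i)=\underline{b}_K(1/z_i)+z_i$ and the mixed derivatives of $\kappa$ coming from the covariances of $(M,M')$, and reducing it to the compact form \eqref{28} requires two integrations by parts; you also still owe an argument that the limiting contour functional is Gaussian, which the paper supplies via the Riemann-sum approximation. With these repairs your proposal is a viable alternative derivation, essentially trading the paper's exact decomposition plus Yao's joint lemma for a linearization plus a recentered Bai--Silverstein-type CLT.
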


\begin{proof}
The proof is postponed to Section IV-B.
\end{proof}

\textit{Remark 6:} The assumption (A3), i.e., the Gaussianity of the observations,  is very strict. We remark that this assumption can be relaxed by the techniques introduced in  \cite{Pan2014Comparison} and \cite{Bodnar2016Spectral}.

\textit{Remark 7:} In Theorem \ref{Theorem Second-order asymptotic behavior},  assumption (A4) is also very strict and can be relaxed. Specifically, it is not necessary for all $L$ distinct eigenvalues to be separated. If only $M$ ($M\leqslant L$) distinct eigenvalues satisfy the spectral separation condition, the estimates obtained using \eqref{proposed estimator} will  still asymptotically follow an $M$-dimensional Gaussian distribution. To be more precise, define  $\bm{\gamma}'=[{\gamma}_{k_1},{\gamma}_{k_2},...,{\gamma}_{k_M}]^T$ ($\{k_1,k_2,...,k_M\}\subset\{1,2,...,L\}$), with each $\gamma_{k_i}$ satisfy the spectral separation condition $\inf_K\phi_K(k_i)>0$ for all $i=1,2,...,M$. Further define the estimates $\breve{\bm{\gamma}}' = [\breve{\gamma}_{k_1},\breve{\gamma}_{k_2},...,\breve{\gamma}_{k_M}]$. Then it holds that $K(\breve{\bm{\gamma}}'-\bm{\gamma}')\overset{d}{\longrightarrow}\mathcal{N}_M(\mathbf{0},\tilde{\mathbf{\Theta}})$ with $\tilde{\mathbf{\Theta}}\in\mathbb{C}^{M\times M}$. The $(m,n)$th entry of $\tilde{\mathbf{\Theta}}$ is given by
\begin{equation}\label{}\nonumber
	\small\begin{aligned}
		&\tilde{\mathbf{\Theta }}_{mn} = 
		\\& -\frac{1}{4\pi^2c_{k_m}c_{k_n}}\oint_{{\Gamma}_{k_m}^+}\oint_{{\Gamma}_{k_n}^{'+}}\frac{{\underline{b}}_K\left(\frac{1}{z_1}\right){\underline{b}}_K\left(\frac{1}{z_2}\right)}{z_1^2z_2^2}\kappa\left(\frac{1}{z_1},\frac{1}{z_2}\right)dz_1dz_2
	\end{aligned}
\end{equation}
for all $1\leqslant m,n \leqslant M$. 

\textit{Remark 8:} The convergence rate established in \eqref{bias term of estiamtor} is exclusively applicable to the complex Gaussian situation, as its derivation is based on Theorem \ref{Theorem Convergence Rate of Expected Stieltjes Transform} and Corollary \ref{Corollaryb 1}. Similarly,  the asymptotic Gaussianity in Theorem \ref{Theorem Second-order asymptotic behavior} is also limited to the complex Gaussian case, as its proof relies on \cite[Lemma 1]{Yao2013Fluctuations} which utilizes the Poincare-Nash inequality for complex Gaussian random vectors.  However,  we highlight that the estimator presented in \eqref{proposed estimator},  along with its almost sure convergence stated in \eqref{a.s. convergence of estimator}, is not restricted to the complex Gaussian case and can be naturally extended to the real Gaussian case. This broader applicability stems from the fact that both the derivation of \eqref{proposed estimator} and almost sure convergence in \eqref{a.s. convergence of estimator} are based on the Theorem \ref{Theorem Weak Convergence of u_N}, which remains valid in both the complex and real Gaussian case.

Theorem \ref{Theorem Second-order asymptotic behavior} characterizes the asymptotic performance of the new estimator $\breve{\gamma}_m$ under large dimensional regime. Unfortunately, the elements of  $\mathbf{\Theta}$ are defined by integrals, which rely on the prior knowledge of eigenvalues of  $\mathbf{M}_N$. In the following theorem, we provide a pointwise consistent estimator for $\mathbf{\Theta}$, which only uses the sample eigenvalues of $\hat{\mathbf{M}}_N$.

\begin{mytheorem}\label{Theorem the consistent estimator for Theta}
	Denote an estimator of $\mathbf{\Theta}$ by $\hat{\mathbf{\Theta}}$, whose $(m,n)$th element is given by
	\begin{equation}\label{the expression of thetakl}
		\hat{\mathbf{\Theta}}_{mn} = \begin{cases}
			\frac{1}{N_m^2}\sum\limits_{k\in\mathcal{N}_m}\sum\limits_{l\in\mathcal{N}^c_m}\frac{\hat{\rho}_k^2\hat{\rho}_l^2}{(\hat{\rho}_k-\hat{\rho}_l)^2}+\frac{K-N}{N_m^2}\sum\limits_{k\in\mathcal{N}_m}\hat{\rho}_k^2,&m=n\\	-\frac{1}{N_mN_n }\sum\limits_{k\in\mathcal{N}_m}\sum\limits_{l\in\mathcal{N}_n}\frac{\hat{\rho}_k^2\hat{\rho}_l^2}{(\hat{\rho}_k-\hat{\rho}_l)^2},&  m\ne n
		\end{cases}
	\end{equation}
	for all $1\leqslant m,n\leqslant L$, where $\mathcal{N}^c_m$ represents the set $\{1,2,...,N\}\setminus\mathcal{N}_m$.
	
	Then under the assumptions (A1)-(A4), $|\hat{\mathbf{\Theta}}_{mn}-\mathbf{\Theta}_{mn}|\overset{a.s.}{\longrightarrow}0$
	for all $1\leqslant m,n\leqslant L$.
\end{mytheorem}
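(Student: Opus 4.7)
The plan is to evaluate the double contour integral defining $\mathbf{\Theta}_{mn}$ by residues after replacing the deterministic Stieltjes transform $\underline{b}_K$ with the empirical one $b_{\underline{\hat{\mathbf{R}}}}$, and to identify the resulting sums with $\hat{\mathbf{\Theta}}_{mn}$. First, I perform the change of variables $w_i = 1/z_i$ in \eqref{28}. Using the relation $\tilde{\mathcal{S}}=\mathcal{S}^{-1}$ derived in Section II-B, the contours $\Gamma_m^+,\Gamma_n^{'+}$ around $\tilde{\mathcal{S}}_m,\tilde{\mathcal{S}}_n$ become contours $\mathcal{C}_m^+,\mathcal{C}_n^{'+}$ around $\mathcal{S}_m,\mathcal{S}_n$; the Jacobian factor $dz_i=-dw_i/w_i^2$ exactly cancels the $1/z_i^2$ prefactor, while the orientation reversal of $w\mapsto 1/w$ restores the original sign, giving
\begin{equation*}
\mathbf{\Theta}_{mn} = -\frac{1}{4\pi^2 c_m c_n}\oint_{\mathcal{C}_m^+}\oint_{\mathcal{C}_n^{'+}} \underline{b}_K(w_1)\,\underline{b}_K(w_2)\,\kappa(w_1,w_2)\,dw_1\,dw_2,
\end{equation*}
with $\kappa$ as in \eqref{kappa} but expressed in $\underline{b}_K$ rather than its precomposition with $1/z$.

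Next, I introduce the sample counterpart $\mathbf{\Theta}_{mn}^{\rm samp}$ obtained by replacing $\underline{b}_K$ with $b_{\underline{\hat{\mathbf{R}}}}(w) = \frac{1}{K}\sum_{i=1}^N (\hat{\sigma}_i - w)^{-1} - \frac{K-N}{Kw}$. Under (A1)--(A4), Theorem \ref{Theorem MP equation} combined with the exact separation of eigenvalues of Bai--Silverstein guarantees that, almost surely for all large $K$, the contour $\mathcal{C}_m^+$ encloses exactly the sample eigenvalues $\hat{\sigma}_k$ with $k\in\mathcal{N}_m$ and avoids $0$. Moreover $b_{\underline{\hat{\mathbf{R}}}}\overset{a.s.}{\longrightarrow}\underline{b}_K$ uniformly on these contours, and by Cauchy's formula the convergence carries over to the derivatives. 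A dominated-convergence argument then yields $\mathbf{\Theta}_{mn}^{\rm samp}-\mathbf{\Theta}_{mn}\overset{a.s.}{\longrightarrow}0$, so it remains to show $\mathbf{\Theta}_{mn}^{\rm samp}=\hat{\mathbf{\Theta}}_{mn}(1+o(1))$ almost surely.

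The heart of the argument is the residue evaluation of $\mathbf{\Theta}_{mn}^{\rm samp}$. The key observation is that at each pole $\hat{\sigma}_l$ of $b_{\underline{\hat{\mathbf{R}}}}(w_2)$, the composite factor $b'_{\underline{\hat{\mathbf{R}}}}(w_1)b'_{\underline{\hat{\mathbf{R}}}}(w_2)/\bigl(b_{\underline{\hat{\mathbf{R}}}}(w_1)-b_{\underline{\hat{\mathbf{R}}}}(w_2)\bigr)^2$ is actually analytic in $w_2$ with value $Kb'_{\underline{\hat{\mathbf{R}}}}(w_1)$, because the double pole of $b'_{\underline{\hat{\mathbf{R}}}}(w_2)$ is exactly cancelled by the double zero of the inverse squared difference. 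Coupled with the simple-pole residue $-1/K$ of $b_{\underline{\hat{\mathbf{R}}}}(w_2)$ at $\hat{\sigma}_l$, the inner residue reduces to $-b'_{\underline{\hat{\mathbf{R}}}}(w_1)+\frac{1}{K(w_1-\hat{\sigma}_l)^2}$. Summing over $l$ in the appropriate index set and repeating the residue calculation at each $w_1=\hat{\sigma}_k$, the apparent third-order singularities of the product do not contribute to the residue, while the remaining Laurent coefficients combine to produce, for $m\ne n$, $\mathbf{\Theta}_{mn}^{\rm samp}=-\frac{1}{K^2 c_m c_n}\sum_{k\in\mathcal{N}_m,\,l\in\mathcal{N}_n}(\hat{\sigma}_k-\hat{\sigma}_l)^{-2}$, and for $m=n$ an additional contribution $\frac{K-N}{K^2 c_m^2}\sum_{k\in\mathcal{N}_m}\hat{\sigma}_k^{-2}$ that originates from the regular piece $\frac{K-N}{Kw^2}$ of $b'_{\underline{\hat{\mathbf{R}}}}$ evaluated at $\hat{\sigma}_k$. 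Applying $(\hat{\sigma}_k-\hat{\sigma}_l)^{-2}=\hat{\rho}_k^2\hat{\rho}_l^2/(\hat{\rho}_k-\hat{\rho}_l)^2$ together with $N_m/K\to c_m$ identifies $\mathbf{\Theta}_{mn}^{\rm samp}$ with $\hat{\mathbf{\Theta}}_{mn}(1+o(1))$ almost surely, thereby yielding the claim.

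The main obstacle will be the $m=n$ case of the residue computation. The contours $\mathcal{C}_m^+$ and $\mathcal{C}_m^{'+}$ must be chosen nested (say $\mathcal{C}_m^{'+}$ strictly inside $\mathcal{C}_m^+$), so that the singular piece $-(w_1-w_2)^{-2}$ of $\kappa$ contributes only via residues at $w_1=\hat{\sigma}_k$ and not at $w_1=w_2$. The integrand then has apparent third-order poles at each $\hat{\sigma}_k$ whose leading singular parts, coming from $b_{\underline{\hat{\mathbf{R}}}}(w_1)$ and the combination $-N_m b'_{\underline{\hat{\mathbf{R}}}}(w_1)+\frac{1}{K}\sum_{j\in\mathcal{N}_m}(w_1-\hat{\sigma}_j)^{-2}$, must be tracked carefully because the triple-pole coefficient does not contribute to the residue while the double-pole coefficient does. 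Extracting the $(K-N)$ correction likewise requires care: it does not arise from a residue at $w=0$ (which lies outside the contour) but from evaluating the regular tail $\frac{K-N}{Kw^2}$ of $b'_{\underline{\hat{\mathbf{R}}}}$ at the point $w_1=\hat{\sigma}_k$ within the residue expansion.
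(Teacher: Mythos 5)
Your proposal is correct and is, at its core, the same strategy as the paper's: plug the empirical Stieltjes transform $b_{\underline{\hat{\mathbf{R}}}}$ into the integral formula \eqref{28}, justify consistency by exact separation plus dominated convergence, and evaluate the resulting double contour integral exactly by residues at the sample eigenvalues. The execution differs in bookkeeping: you change variables $w=1/z$ and work around the SCM clusters $\mathcal{S}_m$ with poles at $\hat{\sigma}_k$, doing the inner $w_2$-residues first and killing the $b'b'/(b-b)^2$ cross term through $\oint b\,b'\,dw=\tfrac12\oint d(b^2)=0$, whereas the paper stays in the $z$-domain with poles at $\hat{\rho}_k$, splits the integrand into $I_1$ and $I_2$, and shows $I_1=0$ by checking that every residue vanishes (Proposition 1, Appendix G); the two computations are equivalent, and your observation that the singularity of $b'(w_2)/(b(w_1)-b(w_2))^2$ at $\hat{\sigma}_l$ is removable with value $Kb'(w_1)$ matches the paper's first-order-pole count. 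Your route makes the origin of the $(K-N)$ term transparent (the $-\tfrac{K-N}{Kw}$ tail of the dual transform), which in the paper appears as the $(1-c_K)$ term of $b_{\underline{\hat{\mathbf{R}}}}(1/z)$. Two small points to tighten: the sign after the substitution comes from $dz/z^2=-dw$ in each variable (the inversion actually preserves orientation of contours not enclosing the origin), so the net sign is right but for the reason of two cancelling minus signs rather than an orientation reversal; and in the $m=n$ case your summary states only the ``additional'' $(K-N)$ term, while the full residue computation shows that the within-cluster double sum from the simple poles cancels against part of $h_k'(\hat{\sigma}_k)$ (where $b(w)=-\tfrac{1}{K(w-\hat{\sigma}_k)}+h_k(w)$), leaving the complement sum $\sum_{k\in\mathcal{N}_m}\sum_{l\in\mathcal{N}_m^c}$ with a positive sign — this should be made explicit, since that cancellation is exactly what produces the stated form of $\hat{\mathbf{\Theta}}_{mm}$.
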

\begin{proof}
	See Section IV-C.
\end{proof}

We emphasize that Theorem \ref{Theorem the consistent estimator for Theta} is useful in practical applications because it provides an estimation of the degree of confidence for each $\breve{\gamma}_m$.  
\section{Proofs of Theorems \ref{Theorem first-order behavior}-\ref{Theorem the consistent estimator for Theta}}
In this section, we give the proofs of Theorems \ref{Theorem first-order behavior}-\ref{Theorem the consistent estimator for Theta}.
\subsection{Proof of Theorem \ref{Theorem first-order behavior}}
We first prove \eqref{a.s. convergence of estimator}. Using the Cauchy integral formula, we can express $\gamma_m$ in an integral form:
\begin{equation}\label{integral in w domain}
	\gamma_m  = \frac{N}{N_m2\pi \mathsf{i}}\oint_{\mathcal{C}_m^+}-\omega\frac{1}{N}\sum_{i=1}^LN_i\frac{1}{\gamma_i-\omega}d\omega
\end{equation}
where $\mathcal{C}_m^+$ is a positively oriented contour taking values on $\mathbb{C}\setminus \{\gamma_1,\gamma_2,...,\gamma_L\}$ and only enclosing $\gamma_m$. Now we need to carefully choose the integration contour $\mathcal{C}_m^+$. We know from Section II that $\gamma_m$ always locates in interval $\mathcal{W}_m$. Hence We can take the contour $\mathcal{C}_m^+$ as a rectangle symmetric with respect to real axis:
\begin{equation}\label{key}\nonumber
	\mathcal{C}_m^{+} = \{\omega\in\mathbb{C}:\tau_1\leqslant{\rm Re}(\omega)\leqslant \tau_2, |{\rm Im}(z)|\leqslant \omega_y\}
\end{equation}
where $\omega_y$ is a positive value, and $\tau_1$ and $\tau_2$ are two real values such that $\mathcal{W}_m\subset\left(\tau_1,\tau_2\right)$.

With the change of variable $\omega = z(1+cz\tilde{b}_K(z))$, the integral \eqref{integral in w domain} is transformed into $z$-domain:
\begin{equation}\label{34}
	\begin{aligned}
		\gamma_m  &= \frac{N}{N_m2\pi \mathsf{i}}\oint_{\Gamma_m^+}-\frac{1}{N}\sum_{i=1}^LN_i\frac{z(1+cz\tilde{b}_K(z))}{\gamma_i-z(1+cz\tilde{b}_K(z))}\omega'dz
		\\& = \frac{N}{N_m2\pi \mathsf{i}}\oint_{\Gamma_m^+}g(z)dz
	\end{aligned}
\end{equation}
where
\begin{equation}\label{key}
	g(z)=	-z\tilde{b}_K(z)\left(1+2zc\tilde{b}_K(z)+z^2c\tilde{b}'_K(z)\right),
\end{equation}
and the contour $\mathcal{C}_m^+$ is transformed to $\Gamma_m^+$, which is a positively oriented rectangle symmetric with respect to real axis:
\begin{equation}\label{key}
	\Gamma_m^+ =\{z\in\mathbb{C}:t_1\leqslant{\rm Re}(z)\leqslant t_2, |{\rm Im}(z)|\leqslant y\}
\end{equation}
where $y$ is a positive value; $t_1$ and $t_2$  are two real values such that the interval $(t_1,t_2)$ only encloses $\tilde{\mathcal{S}}_m$. This can be ensured by the assumption (A4).

Now by using Theorem 2 and the dominated convergence theorem, we can obtain a pointwise strongly consistent estimator for $g(z)$:
\begin{equation}\label{hat g}
	\begin{aligned}
		\hat{g}(z) = -zb_{\hat{\mathbf{M}}}(z)\left(1+2c_Kzb_{\hat{\mathbf{M}}}(z)+z^2c_Kb'_{\hat{\mathbf{M}}}(z)\right)
	\end{aligned}
\end{equation}
which will be used to estimate $\gamma_m$. Before proceeding, we introduce the following useful lemma.
\begin{mylemma}\label{Lemma gamma}
	Let assumptions (A1)-(A4) hold true. Then it holds true that
	\begin{equation}\label{key}
		\left| \frac{N}{N_m2\pi \mathsf{i}}\oint_{\Gamma_m^+}\hat{g}(z)dz -\gamma_m\right|\overset{a.s.}{\longrightarrow}0
	\end{equation}
	for all $1\leqslant m\leqslant L$.
\end{mylemma}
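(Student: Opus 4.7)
The plan is to combine \eqref{34} with Theorem \ref{Theorem Weak Convergence of u_N} and a Vitali-type uniform convergence argument on the contour $\Gamma_m^+$. By \eqref{34}, $\gamma_m = \frac{N}{N_m 2\pi\mathsf{i}}\oint_{\Gamma_m^+} g(z)\,dz$, so the lemma reduces to establishing
\begin{equation}\nonumber
\oint_{\Gamma_m^+}\bigl(\hat{g}(z)-g(z)\bigr)\,dz \overset{a.s.}{\longrightarrow} 0.
\end{equation}
Since both $g$ and $\hat{g}$ are polynomial combinations of $z$, $b_{\hat{\mathbf M}}$ (resp.\ $\tilde{b}_K$), $b'_{\hat{\mathbf M}}$ (resp.\ $\tilde{b}'_K$), and $c_K$ (resp.\ $c$), it suffices to control the Stieltjes transforms and their derivatives on $\Gamma_m^+$.

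First, Theorem \ref{Theorem Weak Convergence of u_N} supplies the pointwise a.s.\ convergence $b_{\hat{\mathbf M}}(z)\to \tilde{b}_K(z)$ for every $z\in\mathbb{C}\setminus\mathbb{R}$. To promote this to the derivative, I would apply Cauchy's integral formula $b'_{\hat{\mathbf M}}(z)=\frac{1}{2\pi\mathsf{i}}\oint_{\gamma}\frac{b_{\hat{\mathbf M}}(\zeta)}{(\zeta-z)^2}d\zeta$ on a small circle $\gamma\subset\mathbb{C}\setminus\mathbb{R}$ around $z$, combined with the trivial bound $|b_{\hat{\mathbf M}}(\zeta)|\le 1/|\mathrm{Im}\,\zeta|$, to deduce $b'_{\hat{\mathbf M}}(z)\to \tilde{b}'_K(z)$ a.s. Together with $c_K\to c$, this gives pointwise a.s.\ convergence $\hat{g}(z)\to g(z)$ for all $z\in\Gamma_m^+\setminus\mathbb{R}$.

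The main obstacle is upgrading pointwise convergence off the real axis to convergence of the contour integral, because $\Gamma_m^+$ meets $\mathbb{R}$ at $(t_1,0)$ and $(t_2,0)$ and $b_{\hat{\mathbf M}}$ has poles at the eigenvalues $\hat{\rho}_i$. Here I would invoke the exact-separation result of Bai and Silverstein \cite{Bai1998No,Bai1999Exact}, transferred from $\hat{\mathbf R}_N$ to $\hat{\mathbf M}_N$ via $\hat{\rho}_i=1/\hat{\sigma}_i$ and the reciprocal relation $\tilde{\mathcal{S}}=\mathcal{S}^{-1}$ between the supports. Under assumption (A4) the support $\tilde{\mathcal{S}}$ splits into $L$ disjoint compacts, so for all large $N$ almost surely every $\hat{\rho}_i$ lies strictly inside one of the $\tilde{\mathcal{S}}_k$. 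Since $\Gamma_m^+$ was chosen so that $(t_1,t_2)$ encloses only $\tilde{\mathcal{S}}_m$ and $\Gamma_m^+\cap\tilde{\mathcal{S}}=\emptyset$, there exists $\delta>0$ with $\mathrm{dist}(\Gamma_m^+,\{\hat{\rho}_1,\ldots,\hat{\rho}_N\})\ge\delta$ eventually a.s. Consequently $b_{\hat{\mathbf M}}$ and $b'_{\hat{\mathbf M}}$ are analytic in an open neighborhood of $\Gamma_m^+$ and uniformly bounded there (by $1/\delta$ and $1/\delta^2$ respectively).

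To conclude, I would apply Vitali's convergence theorem: the sequence $b_{\hat{\mathbf M}}(\cdot)$ consists of analytic functions on a fixed neighborhood of $\Gamma_m^+$ that are locally uniformly bounded and converge pointwise a.s.\ to the analytic limit $\tilde{b}_K(\cdot)$; hence the convergence is uniform on $\Gamma_m^+$, and the same holds for the derivatives. Therefore $\hat{g}(z)\to g(z)$ uniformly on $\Gamma_m^+$ a.s., and since $\Gamma_m^+$ has finite length,
\begin{equation}\nonumber
\frac{N}{N_m 2\pi\mathsf{i}}\oint_{\Gamma_m^+}\hat{g}(z)\,dz \overset{a.s.}{\longrightarrow} \frac{N}{N_m 2\pi\mathsf{i}}\oint_{\Gamma_m^+}g(z)\,dz=\gamma_m,
\end{equation}
which is the desired conclusion.
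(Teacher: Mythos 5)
Your proposal is correct and follows essentially the same route as the paper: reduce the claim to showing $\oint_{\Gamma_m^+}(\hat{g}(z)-g(z))\,dz\to 0$ a.s., invoke the Bai--Silverstein exact-separation results to guarantee that for all large $N,K$ the sample eigenvalues $\hat{\rho}_i$ stay away from $\Gamma_m^+$ (so $\hat{g}$ is analytic and bounded near the contour), and then pass to the limit in the contour integral using the pointwise consistency supplied by Theorem \ref{Theorem Weak Convergence of u_N}. The only difference is cosmetic: you finish with Vitali's theorem (uniform convergence on the contour, derivatives handled via Cauchy's formula), while the paper bounds $\sup_{z\in\Gamma_m^+}|\hat{g}(z)-g(z)|$ and applies the dominated convergence theorem; both are valid conclusions of the same argument.
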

\begin{proof}
	By using the integral representation of $\gamma_m$, we have
	\begin{equation}\label{key}
		\begin{aligned}
			\left| \frac{N}{N_m2\pi \mathsf{i}}\oint_{\Gamma_m^+}\hat{g}(z)dz -\gamma_m\right|
			&= \left|\frac{N}{N_m2\pi \mathsf{i}}\oint_{\Gamma_m^+}(\hat{g}(z)-g(z))dz \right|
			\\&\leqslant \frac{N}{N_m2\pi }\oint_{\Gamma_m^+}\left|\hat{g}(z)-g(z)\right||dz |.
		\end{aligned}
	\end{equation}
	
	Next we need to prove that $\left|\hat{g}(z)-g(z)\right|$ is bounded over the contour $\Gamma_m^+$, which is equivalent to show that 
	\begin{equation}\label{41}
		\sup_{z\in\Gamma_m^+} |\hat{g}(z)-g(z)|<\infty
	\end{equation}
	for all large $N,K$. 
	
	We note that $\hat{g}(z)$ is holomorphic on $\mathbb{C}$, except for a set of poles: $\{\hat{\rho}_1,\hat{\rho}_2,...,\hat{\rho}_N\}$. Hence the problem to prove \eqref{41} lies in the discontinuity of $\hat{g}(z)$ on the real axis. This can be handled by the exact spectral separation arguments in \cite{Bai1998No} and \cite{Bai1999Exact}. According to the results in \cite{Bai1998No} and \cite{Bai1999Exact}, we can easily check that only the eigenvalues \{$\hat{\rho}_k,k\in\mathcal{N}_m\}$ are located inside $\Gamma_m^+$ and there will be no sample eigenvalues outside the support $\tilde{\mathcal{S}}$ for all large $N,K$. This implies that $\sup_{z\in\Gamma_m^+}|\hat{g}(z)|<\infty$.
	
	Hence from the above analysis, we have
	\begin{equation}\label{key}
		\sup_{z\in\Gamma_m^+} |\hat{g}(z)-g(z)|\leqslant 	\sup_{z\in\Gamma_m^+}|\hat{g}(z)| + 	\sup_{z\in\Gamma_m^+}|g(z)|<\infty
	\end{equation}
	almost surely for all large $N,K$. Consequently, applying the dominated convergence theorem yields
	\begin{equation}\label{key}
		\frac{N}{N_m2\pi }\oint_{\Gamma_m^+}\left|\hat{g}(z)-g(z)\right||dz |\overset{a.s.}{\longrightarrow} 0
	\end{equation}
	for all large $N,K$. This completes the proof of Lemma \ref{Lemma gamma}.
\end{proof}

If we write
\begin{equation}\label{hat gamma m integral form}
	\breve{\gamma}_m = \frac{N}{N_m2\pi \mathsf{i}}\oint_{\Gamma_m^+}\hat{g}(z)dz,
\end{equation}
then Lemma \ref{Lemma gamma} implies that $\breve{\gamma}_m$ is a strongly consistent estimator of $\gamma_m$. To complete the proof of \eqref{a.s. convergence of estimator}, we only need to check that the integral \eqref{hat gamma m integral form}  is equal to \eqref{proposed estimator}. 

By substituting the expressions of $b_{\hat{\mathbf{M}}}(z)$ and $b'_{\hat{\mathbf{M}}}(z)$ into \eqref{hat g}, we expand the expression of $\hat{g}(z)$ as
\begin{equation}\label{key}
	\hat{g}(z) = -\left(\frac{1}{N}\sum_{i=1}^N\frac{z}{\hat{\rho}_i-z}\right) \left(1-c_K+\frac{c_K}{N}\sum_{r=1}^N\left(\frac{\hat{\rho}_r}{\hat{\rho}_r-z}\right)^2\right).
\end{equation}

Substituting this expression into \eqref{hat gamma m integral form} yields
\begin{equation}\label{45}
	\hat{\gamma}_m= \frac{1}{N_m}\sum_{i=1}^N\frac{1}{2\pi \mathsf{i}}\oint_{\Gamma_m^+}\hat{g}_i(z)dz
\end{equation}
where 
\begin{equation}\label{key}
		\hat{g}_i(z) = -\frac{z\left(1-c_K+\frac{c_K}{N}\sum_{r=1}^N\left(\frac{\hat{\rho}_r}{\hat{\rho}_r-z}\right)^2\right)}{\hat{\rho}_i-z}.
\end{equation}

A straightforward analysis shows that  the function $\hat{g}_i(z)$ has poles $\left\{\hat{\rho}_k,k\in\mathcal{N}_m\right\}$ within the contour $\Gamma_m^+$. Hence using the residue theorem, we can evaluate $\hat{\gamma}_m$ in \eqref{45} by
\begin{equation}\label{47}
	\breve{\gamma}_m = \frac{1}{N_m}\sum_{k\in\mathcal{N}_m}\sum_{i=1}^N{\rm Res}(g_i(z),\hat{\rho}_k).
\end{equation}

Next we need to carefully analyze these poles. 
\begin{itemize}
	\item When $i=k$, it is easy to find that $\left\{\hat{\rho}_k,k\in\mathcal{N}_m\right\}$ are the third-order poles of $\hat{g}_i(z)$. The corresponding residue is therefore calculated by
	\begin{equation}\label{48}
		{\rm Res}(\hat{g}_k(z),\hat{\rho}_k) = \hat{\rho}_k\left(1-c_K+\frac{c_K}{N}\sum_{r=1\ne k}^N\left(\frac{\hat{\rho}_r}{\hat{\rho}_r-\hat{\rho}_k}\right)^2\right).
	\end{equation}
	\item When $i\ne k$, we find that $\{\hat{\rho}_k,k\in\mathcal{N}_m\}$ are the second-order poles of $\hat{g}_i(z)$. The residue is computed by
	\begin{equation}\label{49}
		{\rm Res}(\hat{g}_i(z),\hat{\rho}_k) = -\frac{c_K}{N}\frac{\hat{\rho}_k^2\hat{\rho}_i}{\hat{\rho}_k-\hat{\rho}_i}.
	\end{equation}
\end{itemize}

Then by substituting \eqref{48} and \eqref{49} into \eqref{47} and after some simple manipulations, we get
\begin{equation}\label{key}
	\small	\begin{aligned}
		\breve{\gamma}_m &= \frac{1}{N_m}\sum_{k\in\mathcal{N}_m}\left({\rm Res}(g_k(z),\hat{\rho}_k) +\sum_{i=1\ne k}^{N}{\rm Res}(g_i(z),\hat{\rho}_k)\right)
		\\& = \frac{1}{N_m}\sum_{k\in\mathcal{N}_m}\hat{\rho}_k\left(1-c_K-\frac{c_K}{N}\sum_{i=1\ne k}^N\frac{\hat{\rho}_i}{\hat{\rho}_k-\hat{\rho}_i}\right)
	\end{aligned}
\end{equation}
which completes the proof of \eqref{a.s. convergence of estimator}.

Next our goal is to establish \eqref{bias term of estiamtor}. From the relation \eqref{relation 2}, we have
\begin{equation}\label{50}
	\tilde{b}_K(z) = - \frac{1}{zc}\left(\frac{1}{z}{\underline{b}_K(1/z)}+1\right).
\end{equation}

Taking derivative of both sides of \eqref{50} with respect to $z$ yields
\begin{equation}\label{51-1}
	\tilde{b}'_K(z) =  \frac{1}{c}\frac{{\underline{b}}'_K(1/z)+2z{\underline{b}}_K(1/z)+z^2}{z^4}.
\end{equation}

Substituting \eqref{50} and \eqref{51-1} into \eqref{34}, we obtain an alternative integral representation of $\gamma$:
\begin{equation}\label{gamma_m}
		\gamma_m = \frac{1}{c_m2\pi \mathsf{i}}\oint_{\Gamma_m^+}\frac{1}{z^2}{\underline{b}}'_K(1/z) \left(\frac{1}{z}{{\underline{b}}_K(1/z)}+1\right)dz.
\end{equation}

Likewise, we can re-express $\breve{\gamma}_m$ by the following integral from:
\begin{equation}\label{hat gamma_m}
		\breve{\gamma}_m =  \frac{K}{N_m2\pi \mathsf{i}}\oint_{\Gamma_m^+}\frac{1}{z^2}b'_{\underline{\hat{\mathbf{R}}}}(1/z) \left(\frac{1}{z}{b_{\underline{\hat{\mathbf{R}}}}(1/z)}+1\right)dz.
\end{equation}

Then combining \eqref{gamma_m} and \eqref{hat gamma_m}, and noticing from  assumption (A2) that $N_m/K\to c_m$, we have
\begin{equation}\label{hat gamma_m - gamma_m}
	\begin{aligned}
		&\breve{\gamma}_m-\gamma_m
		\\& = \frac{1}{c_m2\pi\mathsf{i}}\oint_{\Gamma_m^+}\frac{1}{z^3}\left[\left({\underline{b}}_K(1/z)+z\right)\left(b'_{\hat{\underline{\mathbf{R}}}}(1/z)-{\underline{b}}'_K(1/z)\right)\right.
		\\&\qquad\qquad\qquad\qquad \left.+b'_{\hat{\underline{\mathbf{R}}}}(1/z)\left(b_{\hat{\underline{\mathbf{R}}}}(1/z)-{\underline{b}}_K(1/z)\right)\right]dz.
	\end{aligned}
\end{equation}

If we define
\begin{equation}\label{key}
	\begin{aligned}
		&\nu_1(z) = \frac{1}{z^3}\left({\underline{b}}_K(1/z)+z\right)\left(b'_{\hat{\underline{\mathbf{R}}}}(1/z)-{\underline{b}}'_K(1/z)\right),
		\\&\nu_2(z) = \frac{1}{z^3}\left(b'_{\hat{\underline{\mathbf{R}}}}(1/z)- \underline{b}'_K(1/z)\right)\left(b_{\hat{\underline{\mathbf{R}}}}(1/z)-{\underline{b}}_K(1/z)\right),
		\\&\nu_3(z) = \frac{1}{z^3} \underline{b}'_K(1/z)\left(b_{\hat{\underline{\mathbf{R}}}}(1/z)-{\underline{b}}_K(1/z)\right),
	\end{aligned}
\end{equation}
then \eqref{hat gamma_m - gamma_m} can be transformed to
\begin{equation}\label{key}
\hat{\gamma}_m-\gamma_m = \frac{1}{c_m2\pi\mathsf{i}}\oint_{\Gamma_m^+}(\nu_1(z)+\nu_2(z)+\nu_3(z))dz.
\end{equation}

Using Fubini's theorem, we have
\begin{equation}\label{59}
	E\hat{\gamma}_m-\gamma_m =  \frac{1}{c_m2\pi\mathsf{i}}\oint_{\Gamma_m^+}(E\nu_1(z)+E\nu_2(z)+E\nu_3(z))dz
\end{equation}
where
\begin{equation}\label{key}
	\small\begin{aligned}
		&E\nu_1(z) = \frac{1}{z^3}\left({\underline{b}}_K(1/z)+z\right)\left(Eb'_{\hat{\underline{\mathbf{R}}}}(1/z)-{\underline{b}}'_K(1/z)\right),
		\\&E\nu_2(z) = \frac{1}{z^3}E\left[\left(b'_{\hat{\underline{\mathbf{R}}}}(1/z)- \underline{b}'_K(1/z)\right)\left(b_{\hat{\underline{\mathbf{R}}}}(1/z)-{\underline{b}}_K(1/z)\right)\right],
		\\&E\nu_3(z) = \frac{1}{z^3} \underline{b}'_K(1/z)\left(Eb_{\hat{\underline{\mathbf{R}}}}(1/z)-{\underline{b}}_K(1/z)\right).
	\end{aligned}
\end{equation}

Applying Theorem \ref{Theorem Convergence Rate of Expected Stieltjes Transform} and using the facts that $|\underline{b}_K(1/z)|\leqslant\frac{1}{|{\rm Im}(1/z)|}=\frac{|z|^2}{|{\rm Im}z|}$ and $|\underline{b}'_K(1/z)|\leqslant \frac{1}{|{\rm Im}(1/z)|^2} = \frac{|z|^4}{|{\rm Im}z|^2}$, it is easy to check that 
\begin{equation}\label{61}
	\begin{aligned}
	|E\nu_1(z)| &\leqslant \frac{1}{|z|^3}\left(\frac{|z|^2}{|{\rm Im}z|}+|z|\right)\frac{1}{K^2}P_{15}(|1/z|)P_{20}\left(|{\rm Im}(1/z)|^{-1}\right)
	\\&\leqslant\frac{1}{K^2}P_{42}(|z|)P_{39}(|{\rm Im}z|^{-1}),
\end{aligned}
\end{equation}
and 
\begin{equation}\label{62}
	\begin{aligned}
	|E\nu_3(z)|&\leqslant \frac{1}{|z|^3}\frac{|z|^4}{|{\rm Im}z|^2}\frac{1}{K^2}P_9(|1/z|)P_{12}(|{\rm Im}(1/z)|^{-1})
	\\ &\leqslant \frac{1}{K^2}P_{25}(|z|)P_{23}(|{\rm Im}z|^{-1}).
\end{aligned}
\end{equation}

As for the bound of $|E\nu_2(z)|$, we need to use Cauchy-Schwarz inequality and Corollary \ref{Corollaryb 1}, which yield
\begin{equation}\label{63}
	\small \begin{aligned}
	&|E\nu_2(z)| 
	\\&\leqslant \frac{1}{|z|^3}\sqrt{E\left[(b'_{\underline{\hat{\mathbf{R}}}}(1/z)-b'_K(1/z))^2\right]E\left[(b_{\underline{\hat{\mathbf{R}}}}(1/z)-b_K(1/z))^2\right]}
	\\&\leqslant\frac{1}{K^2}P_{64}(|z|)P_{59}(|{\rm Im}z|^{-1}).
\end{aligned}
\end{equation}  

Given these useful bounds, we are now available to find the convergence rate  of $E\hat{\gamma}_m-\gamma_m$. Before proceeding, we can use the same arguments in \cite{Mestre2008Improved} to write the expression in \eqref{59} as
\begin{equation}\label{64}
	\begin{aligned}
		E\hat{\gamma}_m-\gamma_m =  \frac{1}{c_m}\sum_{k=1}^3\frac{1}{2\pi\mathsf{i}}\lim_{y\to0^+}&\left[\int_{t_1}^{t_2}E\nu_k(x+\mathsf{i}y)dx\right.
		\\&\left.-\int_{t_1}^{t_2}E\nu_k(x-\mathsf{i}y)dx\right].
	\end{aligned}
\end{equation}

In order to obtain the bound of \eqref{64}, we need to use Lemma 9 in \cite{Vallet2012Improved}, which is originally proved in \cite{Haagerup2005A} and generalized in \cite{Capitaine2007Strong}. By using Lemma 9 in \cite{Vallet2012Improved}  and noticing from \eqref{61}-\eqref{63} that $|E\nu_k(z)|\leqslant \frac{1}{K^2}P_m(|z|)P_n(|{\rm Im}z|^{-1})$ for $k=1,2,3$ and some positive integers $m,n$, we obtain
\begin{equation}\label{65-1}
	\lim_{y\to0^+}\int_{t_1}^{t_2}|E\nu_k(x+\mathsf{i}y)|dx\leqslant \frac{a_k}{K^2}< \infty
\end{equation}
for some positive real numbers $a_k$ ($k=1,2,3$) independent of $K$. 

Likewise, using the same arguments as in \cite{Haagerup2005A} and \cite{Capitaine2007Strong}, we can also obtain
\begin{equation}\label{66-1}
	\lim_{y\to0^+}\int_{t_1}^{t_2}|E\nu_k(x-\mathsf{i}y)|dx \leqslant \frac{d_k}{K^2}<\infty
\end{equation}
for some positive real numbers $d_k$ ($k=1,2,3$) independent of $K$. 

Hence, by using \eqref{65-1} and \eqref{66-1} into \eqref{64}, we finally obtain
\begin{equation}\label{key}
	|E\hat{\gamma}_m-\gamma_m|\leqslant \frac{1}{c_m}\sum_{k=1}^3\frac{1}{2\pi}\left(\frac{a_k}{K^2}+\frac{d_k}{K^2}\right) = O\left(\frac{1}{K^2}\right)
\end{equation}
which completes the proof of \eqref{bias term of estiamtor}.
\subsection{Proof of Theorem \ref{Theorem Second-order asymptotic behavior} }
We first consider a process
\begin{align*}
	(M_K,M'_K,u'_K): \quad&\mathcal{K}\to\mathbb{C}^3\\
	&z\mapsto (M_K(z),M'_K(z),u'_K(z))
\end{align*}
where $M_K(z) = K\left(b_{\hat{\underline{\mathbf{R}}}}(z) - {\underline{b}}_K(z)\right)
$, $M'_K(z) = K\left(b'_{\hat{\underline{\mathbf{R}}}}(z) - {\underline{b}}'_K(z)\right)$, and $u'_K(z) = b'_{\hat{\underline{\mathbf{R}}}}(z)$.

Let $\mathcal{K}$ be $\Gamma_m^+$. Then from \eqref{hat gamma_m - gamma_m}, we have
\begin{equation}
	\begin{aligned}
		&K(\breve{\gamma}_m-\gamma_m)\\ &=\frac{1}{c_m2\pi\mathsf{i}}\oint_{{\Gamma}_m^+}\frac{1}{z^3}\left[\zeta(z)M'_K(1/z)+u'_K(1/z)M_K(1/  z)\right]dz\\
		&\triangleq \Delta_K(M_K,M'_K,u'_K,{\Gamma}_m^+)
	\end{aligned}
\end{equation}
where we define $\zeta(z) \triangleq {\underline{b}}_K(1/z)+{z}$.

In what follows, we will divide the proof of Theorem \ref{Theorem Second-order asymptotic behavior} into three steps: (i) to prove that the process $(M_K(1/z),M'_K(1/z),u'_K(1/z))$ converges in distribution to a Gaussian process; (ii) to transfer the convergence from the process $(M_K(1/z),M'_K(1/z),u'_K(1/z))$ to $\Delta_K(M_K,M'_K,u'_K,\Gamma_m^+)$ by using the continuous mapping theorem and to check that the limit of $\Delta_K(M_K,M'_K,u'_K,\Gamma_m^+)$ is Gaussian; and (iii) to calculate the limiting covariance between $\Delta_K(M_K,M'_K,u'_K,\Gamma_m^+)$ and $\Delta_K(M_K,M'_K,u'_K,\Gamma_n^+)$.

Now let us start the proof.
	\subsubsection{Convergence of $(M_K(1/z),M'_K(1/z),u'_K(1/z))$}
In order to obtain the convergence of $(M_K,M'_K,u'_K)$, we need to use the following lemma that was proved in \cite{Yao2013Fluctuations} and \cite{Yao2012Eigenvalue}.
\begin{mylemma}\label{convergence of process M and diff of M}
	Consider a compact set $\mathcal{K}\subset\mathbb{C}$, which is symmetric with respect to the real axis. Then the process
	\begin{align*}
		(M_K,M'_K): \quad&\mathcal{K}\to\mathbb{C}^2\\
		&z\mapsto (M_K(z),M'_K(z))
	\end{align*}
	converges in distribution to a stochastic process $(M,M')$, satisfying $\overline{M(z)}=M(\bar{z})$ and $\overline{M'(z)}=M'(\bar{z})$. The process $(M,M')$ is a centered real Gaussian process with mean zero and covariance function defined as follows
	\begin{align*}
		\begin{matrix}
		&EM(z)M(\tilde{z}) = \kappa(z,\tilde{z});&EM(z)M'(\tilde{z}) = \frac{\partial{\kappa(z,\tilde{z})}}{\partial \tilde{z}}\\
		&EM'(z)M(\tilde{z}) = \frac{\partial{\kappa(z,\tilde{z})}}{\partial {z}}; &EM'(z)M'(\tilde{z}) = \frac{\partial^2}{\partial z\partial \tilde{z}}\kappa(z,\tilde{z})
		\end{matrix}
	\end{align*}
	where the function $\kappa(\cdot,\cdot)$ has been given by \eqref{kappa}.
\end{mylemma}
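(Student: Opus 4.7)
The plan is to piggyback on the classical CLT for linear spectral statistics of sample covariance matrices due to Bai--Silverstein and refined by Yao \emph{et al.}, and then to upgrade it from $M_K$ alone to the joint pair $(M_K, M_K')$ by a holomorphy argument. First I would invoke the one-dimensional CLT to establish, for each compact $\mathcal{K}\subset \mathbb{C}\setminus \underline{\mathcal{S}}$ symmetric about the real axis, that the random holomorphic function $z\mapsto M_K(z) = K(b_{\hat{\underline{\mathbf{R}}}}(z) - \underline{b}_K(z))$ converges in distribution to a centered Gaussian process $M$ with covariance $\kappa(z,\tilde z)$. The reality condition $\overline{M(z)} = M(\bar z)$ is inherited from the corresponding symmetry of both $b_{\hat{\underline{\mathbf{R}}}}$ and $\underline{b}_K$.

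The key step is then to promote this to joint convergence of the derivative process. Since the eigenvalues of $\underline{\hat{\mathbf{R}}}$ almost surely fall within any $\varepsilon$-neighborhood of $\underline{\mathcal{S}}$ for all large $N,K$ (by the no-eigenvalue-outside-the-support theorem and the exact separation of Bai--Silverstein), I may slightly enlarge $\mathcal{K}$ to a compact $\mathcal{K}'$ whose interior still lies in $\mathbb{C}\setminus\underline{\mathcal{S}}$ and in whose interior $M_K$ is almost surely holomorphic. Viewing $M_K$ as a random element of the Banach space of continuous functions on $\mathcal{K}'$ holomorphic in the interior, tightness together with the finite-dimensional CLT gives weak convergence of $M_K$ to $M$ in that space. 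Now Cauchy's formula
\begin{equation*}
M_K'(z) = \frac{1}{2\pi \mathsf{i}}\oint_{\partial \mathcal{K}'} \frac{M_K(\zeta)}{(\zeta-z)^2}\,d\zeta, \qquad z\in \mathcal{K},
\end{equation*}
exhibits $M_K \mapsto M_K'$ as a continuous linear map on this function space, so the continuous mapping theorem delivers the joint convergence $(M_K, M_K') \to (M, M')$ with $M' = \partial_z M$.

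The covariance formulas for $M'$ follow by differentiating $\kappa$ past the expectation: the Cauchy integral representation applied to the covariance kernel justifies the interchange, yielding $EM(z)M'(\tilde z) = \partial_{\tilde z}\kappa(z,\tilde z)$, $EM'(z)M(\tilde z) = \partial_z\kappa(z,\tilde z)$, and $EM'(z)M'(\tilde z) = \partial_z\partial_{\tilde z}\kappa(z,\tilde z)$. Gaussianity of $(M, M')$ is preserved under this continuous linear operation. The main obstacle is establishing tightness of $M_K$ in the holomorphic-function topology, which requires controlling $E|M_K(z)|^2$ uniformly on compact subsets bounded away from $\underline{\mathcal{S}}$; under assumption (A3), these variance bounds are obtained via the Poincar\'e--Nash inequality together with the Gaussian integration-by-parts formula, exactly the tools already deployed in Theorem \ref{Theorem Convergence Rate of Expected Stieltjes Transform} and Corollary \ref{Corollaryb 1}. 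Everything else reduces to an application of existing CLT machinery in the cited works.
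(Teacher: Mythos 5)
Your proposal is correct in outline, but be aware that the paper itself does not prove this lemma: it is imported wholesale from \cite{Yao2013Fluctuations} and \cite{Yao2012Eigenvalue}, so the only ``comparison'' to make is with the route taken in those references, which is essentially the one you reconstruct — the Bai--Silverstein CLT for the linear-statistics process $M_K$ on a contour (in the complex Gaussian case, which is what makes the limit centered and the covariance exactly $\kappa$), upgraded to the derivative process through analyticity. Two points in your sketch need more care. First, for finite $K$ the eigenvalues of $\underline{\hat{\mathbf{R}}}$ can fall on or near the real-axis portion of $\mathcal{K}$ with small but positive probability, so $M_K$ is not literally a random element of your space of functions holomorphic on a neighborhood; one must either work with the truncated process of Bai--Silverstein near the real axis or argue on the event (of probability tending to one by the no-eigenvalue-outside-the-support and exact-separation theorems) on which $\mathcal{K}'$ contains no sample eigenvalues, and check the substitution is harmless. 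Second, when $\mathcal{K}$ is a closed contour encircling a component of the support, the enlarged compact $\mathcal{K}'$ must be a tubular (annular) neighborhood of $\mathcal{K}$ avoiding the support — not a filled region, since $b_{\hat{\underline{\mathbf{R}}}}$ and $\underline{b}_K$ are not holomorphic across the support — and Cauchy's formula for $M_K'$ is then taken over both boundary curves of that annulus with the correct orientations; continuity of the map $M_K\mapsto M_K'$ in sup norm then follows because $\mathcal{K}$ is at positive distance from $\partial\mathcal{K}'$. With these repairs, your argument yields the joint convergence of $(M_K,M_K')$, and the covariance identities for $M'$ indeed follow by differentiating $\kappa$ under the expectation, justified via the Cauchy representation or $L^2$ convergence of difference quotients.
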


A direct consequence of the Lemma \ref{convergence of process M and diff of M} indicates that the process $(M_K(1/z),M'_K(1/z))$ converges in distribution to a Gaussian process $(M(1/z),M'(1/z))$ with mean zero and covariance function:
\begin{equation}\label{65}
	\begin{aligned}
		&EM(1/z)M(1/\tilde{z}) = \kappa(1/z,1/\tilde{z}),\\
		&EM(1/z)M'(1/\tilde{z}) = -\tilde{z}^2\frac{\partial{\kappa(1/z,1/\tilde{z})}}{\partial \tilde{z}},\\
		&EM'(1/z)M(1/\tilde{z}) = -z^2\frac{\partial{\kappa(1/z,1/\tilde{z})}}{\partial {z}},\\
		&EM'(1/z)M'(1/\tilde{z}) = z^2\tilde{z}^2\frac{\partial^2}{\partial z\partial \tilde{z}}\kappa(1/z,1/\tilde{z}).
	\end{aligned}
\end{equation}

In addition, by using the dominated convergence theorem, $u'_K(1/z)$ converges to ${\underline{b}}'_K(1/z)$ almost surely. Hence, the process $\left(M_K(1/z),M'_K(1/z),u'_K(1/z)\right)$ converges in distribution to a Gaussian process $\left(M(1/z),M'(1/z),{\underline{b}}'_K(1/z)\right)$ with mean $(0,0,{\underline{b}}'_K(1/z))$ and covariance structure given by \eqref{65}. Now step (i) is established.
\subsubsection{Convergence of $\Delta_K(M_K,M'_K,u'_K,\Gamma_m^+)$}
Now our goal is to prove that $\Delta_K(M_K,M'_K,u'_K,\Gamma_m^+)$ converges to a Gaussian distribution. Because $M(1/z),M'(1/z),\underline{b}'_K(1/z)$ are continuous over the contour $\Gamma_m^+$ ($1\leqslant m\leqslant L$), we can transfer the convergence of $\left(M_K(1/z),M'_K(1/z),u'_K(1/z)\right)$ to $\Delta_K(M_K,M'_K,u'_K,\Gamma_m^+)$ by using the continuous mapping theorem \cite[Theorem 4]{Yao2013Fluctuations}. Therefore, we obtain the following convergence
\begin{equation}\label{66}
	\Delta_K(M_K,M'_K,u'_K,{\Gamma}_m^+)\overset{d}{\longrightarrow}\Delta(M,M',{\underline{b}}'_K,{\Gamma}_m^+)
\end{equation}
where
\begin{equation}\label{67}
	\begin{aligned}
		&\Delta(M,M',{\underline{b}}'_K,{\Gamma}_m^+)
		\\&= \frac{1}{c_m2\pi\mathsf{i}}\oint_{{\Gamma}_m^+}\frac{1}{z^3}\left[\zeta(z)M'\left(\frac{1}{z}\right)+\underline{b}'_K\left(\frac{1}{z}\right)M\left(\frac{1}{z}\right)\right]dz.
	\end{aligned}
\end{equation}

The convergence of \eqref{66} only indicates that  $\Delta_K(M_K,M'_K,u'_K,\Gamma_m^+)$ converges in distribution to a random variable $\Delta(M,M',\underline{b}'_K,{\Gamma}_m^+)$, but its Gaussianity is still not verified. Next we need to show the Gaussianity of $\Delta(M,M',\underline{b}'_K,{\Gamma}_m^+)$. By noticing that the integrand of \eqref{67} is a linear combination of Gaussian random variables $M(1/z)$ and $M'(1/z)$, it is easy to check that the integrand of \eqref{67} still follows a Gaussian distribution. Hence the integral \eqref{67} can be written as the limit of a finite Riemann sum of a series of Gaussian random variables, which results in the Gaussianity of $\Delta(M,M',\underline{b}'_K,{\Gamma}_m^+)$.

Applying the convergence of $\Delta_K(M_K,M'_K,u'_K,\Gamma_m^+)$ to the whole vector $K\left(\bm{\breve{\gamma}} -\bm{ {\gamma}}\right)$ yields
\begin{equation}\label{key}
	\begin{pmatrix}
		\Delta_K(M_K,M'_K,u'_K,\Gamma_1^+)\\
		\vdots 
		\\
		\Delta_K(M_K,M'_K,u'_K,\Gamma_L^+)
	\end{pmatrix}\overset{d}{\longrightarrow}{\bm \Delta} = \begin{pmatrix}
		\Delta(M,M',\underline{b}'_K,\Gamma_1^+)\\
		\vdots 
		\\
		\Delta(M,M',\underline{b}'_K,\Gamma_L^+)
	\end{pmatrix}
\end{equation}
where ${\bm{\Delta}}\sim \mathcal{N}_L(\mathbf{s},{\bm \Theta})$ with $\mathbf{s}$ being an $L\times 1$ vector and $\bm \Theta$ being an $L\times L$ real-valued matrix.

Denote the $m$th element of $\mathbf{s}$ by $\mathbf{s}_m$. Then $\mathbf{s}_m$ is computed by	
\begin{equation}\label{key}
	\begin{aligned}
		\mathbf{s}_m &={E}\Delta(M,M',\underline{b}'_K,{\Gamma}_m^+)
\\&{=} \frac{1}{c_m2\pi\mathsf{i}}\oint_{{\Gamma}_m^+}\frac{1}{z^3}\left[\zeta(z){E}M'\left(\frac{1}{z}\right)+\underline{b}'_K\left(\frac{1}{z}\right){E}M\left(\frac{1}{z}\right)\right]dz
		\\& = 0
	\end{aligned}
\end{equation}
where we use the Fubini's theorem and the fact that ${E}M\left(1/z\right)={E}M'\left(1/z\right)=0$. 
Hence we have $\mathbf{s}=0$. Now we complete the proof of step (ii).
\subsubsection{Calculation of Covariance} We are now in position to calculate the covariance between $\Delta(M,M',\underline{b}'_K,{\Gamma}_m^+)$ and $\Delta(M,M',\underline{b}'_K,{\Gamma}_n^+)$ for all $1\leqslant m,n\leqslant L$. According to the definition of covariance, we have
\begin{equation}\label{70}
	\begin{aligned}
		\bm{\Theta}_{mn} &= {E}\Delta(M,M',\underline{b}'_K,{\Gamma}_m^+)\Delta(M,M',\underline{b}'_K,{\Gamma}_n^+)
		\\& =  \beta_{mn} \mathbb{E}\left(\oint_{{\Gamma}_m^+}\frac{1}{z_1^3}\left[\zeta(z_1)M'(\frac{1}{z_1})+\underline{b}'_K(\frac{1}{z_1})M(\frac{1}{z_1})\right]dz_1\right.
		\\&\qquad\quad\left.\times\oint_{{\Gamma}_n^+}\frac{1}{z_2^3}\left[\zeta(z_2)M(\frac{1}{z_2})+\underline{b}'_K(\frac{1}{z_2})M(\frac{1}{z_2})\right]dz_2\right)
	\end{aligned}
\end{equation}
where we define $\beta_{mn}\triangleq -\frac{1}{4\pi^2c_mc_n}$ in order to save space.

Note that we can take two different contours $\Gamma_n^+$ and $\Gamma_n^{'+}$ that satisfy \eqref{30} and only enclose the $n$th cluster, such that the contour integral \eqref{70} remains unchanged. Hence we can arbitrarily choose two non-overlapping contours of the same cluster $\tilde{\mathcal{S}}_n$ for calculation. By replacing $\Gamma_n^+$ with another non-overlapping contour $\Gamma_n^{'+}$ and using the Fubini's theorem and the covariance variance given in \eqref{65}, we obtain \eqref{covariance} at the top of the following page, where $\partial_1$, $\partial_2$ and $\partial^2_{12}$ represent $\partial/\partial z_1$,  $\partial/\partial z_2$ and  $\partial^2/\partial z_1\partial z_2$, respectively.

\begin{figure*}
	\begin{equation}\label{covariance}
		\begin{aligned}
			\bm{\Theta}_{mn}  &=\beta_{mn}\oint_{{\Gamma}_m^+}\oint_{{\Gamma}_n^{'+}}\frac{1}{z_1z_2}\left[\zeta (z_1)\zeta (z_2)\partial^2_{12}\kappa(1/z_1,1/z_2) + \zeta (z_1)\frac{\partial \underline{b}_K(1/z_2)}{\partial z_2}\partial_1\kappa(1/z_1,1/z_2)\right.
			\\&\qquad\qquad\qquad\qquad\qquad \left.+ \zeta (z_2)\frac{\partial \underline{b}_K(1/z_1)}{\partial z_1}\partial_2\kappa(1/z_1,1/z_2) + \frac{\partial \underline{b}_K(1/z_1)}{\partial z_1}\frac{\partial \underline{b}_K(1/z_2)}{\partial z_2}\kappa(1/z_1,1/z_2)\right] dz_1dz_2
		\end{aligned}
	\end{equation}
	\hrulefill
\end{figure*}

Next our task is to simplify \eqref{covariance}. By integration by parts, we have
\begin{equation}\label{73}
	\small	\begin{aligned}
		&\oint \frac{1}{z_1z_2}\zeta (z_1)\frac{\partial \underline{{b}}_K\left(1/z_2\right)}{\partial z_2}\partial_1\kappa(1/z_1,1/z_2) dz_1
		\\&=- \oint \frac{1}{z_1^2z_2}\frac{\partial \underline{b}_K(\frac{1}{z_2})}{\partial z_2}\left(z_1\frac{\partial \underline{b}_K(\frac{1}{z_1})}{\partial z_1} -\underline{b}_K\left(\frac{1}{z_1}\right)  \right)\kappa\left(\frac{1}{z_1},\frac{1}{z_2}\right)dz_1.
	\end{aligned}
\end{equation}

Likewise,
\begin{equation}\label{74}
	\small\begin{aligned}
		&\oint \frac{1}{z_1z_2}\zeta (z_1)\zeta (z_2)\partial^2_{12}\kappa(1/z_1,1/z_2)dz_1
		\\& = -\oint \frac{1}{z_1^2z_2}\zeta(z_2)\left(z_1\frac{\partial \underline{b}_K(\frac{1}{z_1})}{\partial z_1} -\underline{b}_K\left(\frac{1}{z_1}\right)  \right)\partial_2\kappa\left(\frac{1}{z_1},\frac{1}{z_2}\right)dz_1.
	\end{aligned}
\end{equation}

Substituting \eqref{73} and \eqref{74} into \eqref{covariance}, we get
\begin{equation}\label{75}
	\begin{aligned}
		&\bm{\Theta}_{mn} =
		\beta_{mn}\oint_{{\Gamma}_m^+}\oint_{{\Gamma}_n^{'+}} \frac{1}{z_1^2z_2}\left[ \zeta (z_2)\underline{b}_K(1/z_1)\partial_2\kappa(1/z_1,1/z_2) \right.
		\\&\qquad\qquad\qquad
		\left.+\frac{\partial \underline{b}_K(1/z_2)}{\partial z_2}  \underline{b}_K(1/z_1)\kappa(1/z_1,1/z_2)\right]dz_1dz_2.
	\end{aligned}
\end{equation}

Proceeding similarly, the integration by parts yields
\begin{equation}\label{76}
	\small	\begin{aligned}
		&\oint \frac{1}{z_1^2z_2}\zeta (z_2)\tilde{\underline{b}}_N(1/z_1)\partial_2\kappa(1/z_1,1/z_2)dz_2
		\\& = -\oint \frac{{\underline{b}}_K\left(\frac{1}{z_1}\right)}{z_1^2z_2^2} \left(z_2\frac{\partial \underline{b}_K\left(\frac{1}{z_2}\right)}{\partial z_2} -\underline{b}_K\left(\frac{1}{z_2}\right) \right)\kappa\left(\frac{1}{z_1},\frac{1}{z_2}\right)dz_2.
	\end{aligned}
\end{equation}

Finally, substituting \eqref{76} into \eqref{75}, we obtain \eqref{28}, which completes the proof of Theorem \ref{Theorem Second-order asymptotic behavior}.
\subsection{Proof of Theorem \ref{Theorem the consistent estimator for Theta}}
Reviewing the integral representation of $\bm{\Theta}_{mn}$ given in \eqref{28}, we can replace $\underline{b}_K(z)$ and $\underline{b}'_K(z)$ with their consistent estimators $b_{\underline{\hat{\mathbf{R}}}}(z)$ and $b'_{\underline{\hat{\mathbf{R}}}}(z)$. After this, we naturally obtain a consistent estimator for $\bm{\Theta}_{mn}$, given by 
\begin{equation}\label{77}
	\hat{\bm{\Theta}}_{mn} = \hat{\beta}_{mn}\oint_{{\Gamma}_m^+}\oint_{{\Gamma}_n^{'+}}\hat{h}(z_1,z)dz_1dz_2
\end{equation}
where $\hat{\beta}_{mn}\triangleq -\frac{K^2}{4\pi^2N_mN_n}$, and the integrand is given by
\begin{equation}
\begin{aligned}
		&\hat{h}(z_1,z_2) =
		\\& b_{\underline{\hat{\mathbf{R}}}}\left(\frac{1}{z_1}\right)b_{\underline{\hat{\mathbf{R}}}}\left(\frac{1}{z_2}\right)\left(\frac{ \frac{\partial b_{\underline{\hat{\mathbf{R}}}}(\frac{1}{z_1})}{\partial z_1} \frac{\partial b_{\underline{\hat{\mathbf{R}}}}(\frac{1}{z_2})}{\partial z_2} }{(b_{\underline{\hat{\mathbf{R}}}}(\frac{1}{z_1})-b_{\underline{\hat{\mathbf{R}}}}(\frac{1}{z_2}))^2} - \frac{1}{(z_1-z_2)^2}\right).
	\end{aligned}
\end{equation}
Note that the consistency of $\hat{\bm{\Theta}}_{mn}$ is guaranteed by the dominated convergence theorem.

The remaining work is to evaluate the integral \eqref{77} by using the residue theorem. We can split the integral \eqref{77} into two parts, i.e., $\hat{\bm{\Theta}}_{mn} = I_1+I_2$, where
\begin{equation}\label{79}
	\begin{aligned}
		I_1 = \hat{\beta}_{mn}\oint_{{\Gamma}_m^+}\oint_{{\Gamma}_n^{'+}} b_{\underline{\hat{\mathbf{R}}}}(\frac{1}{z_1})b_{\underline{\hat{\mathbf{R}}}}(\frac{1}{z_2})\frac{ \frac{\partial b_{\underline{\hat{\mathbf{R}}}}(\frac{1}{z_1})}{\partial z_1} \frac{\partial b_{\underline{\hat{\mathbf{R}}}}(\frac{1}{z_2})}{\partial z_2} }{(b_{\underline{\hat{\mathbf{R}}}}(\frac{1}{z_1})-b_{\underline{\hat{\mathbf{R}}}}(\frac{1}{z_2}))^2}dz_1dz_2
	\end{aligned}
\end{equation}
and
\begin{equation}\label{80}
	I_2 =  \hat{\beta}_{mn}\oint_{{\Gamma}_m^+}\oint_{{\Gamma}_n^{'+}}-\frac{ b_{\underline{\hat{\mathbf{R}}}}(\frac{1}{z_1})b_{\underline{\hat{\mathbf{R}}}}(\frac{1}{z_2})}{(z_1-z_2)^2}dz_1dz_2.
\end{equation}

We give the explicit results of $I_1$ and $I_2$ in the following proposition.

\begin{myproposition}\label{I1 and I2}
	The integrals $I_1$ and $I_2$ are respectively calculated by $I_1=0$ and 
	\begin{equation}\label{81}
		I_2= \begin{cases}
			\frac{1}{N_m^2}\sum\limits_{k\in\mathcal{N}_m}\sum\limits_{l\in{\mathcal{N}}^c_m}\frac{\hat{\rho}_k^2\hat{\rho}_l^2}{(\hat{\rho}_k-\hat{\rho}_l)^2}+\frac{K-N}{N_m^2}\sum\limits_{k\in\mathcal{N}_m}\hat{\rho}_k^2,& m=n\\	-\frac{1}{N_mN_n}\sum\limits_{k\in\mathcal{N}_m}\sum\limits_{l\in\mathcal{N}_n}\frac{\hat{\rho}_k^2\hat{\rho}_l^2}{(\hat{\rho}_k-\hat{\rho}_l)^2},& m\ne n.
			\end{cases}
	\end{equation}
\end{myproposition}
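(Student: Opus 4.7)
The plan is to evaluate both contour integrals directly by the residue theorem, after putting $g(z):=b_{\underline{\hat{\mathbf{R}}}}(1/z)$ into a form where its pole structure is transparent. Combining \eqref{relation between bRunder and bR} with $b_{\hat{\mathbf{M}}}(z)=\frac{1}{N}\sum_{j=1}^N(\hat{\rho}_j-z)^{-1}$ and doing partial fractions gives
\begin{equation*}
g(z)=\tfrac{N-K}{K}\,z+\tfrac{1}{K}\mathrm{tr}(\hat{\mathbf{M}}_N)+\sum_{j=1}^N\frac{\hat{\rho}_j^{\,2}/K}{z-\hat{\rho}_j},
\end{equation*}
so $g$ is a rational function of degree $N+1$ whose finite singularities are simple poles at the sample eigenvalues $\hat{\rho}_j$ with residues $\hat{\rho}_j^{\,2}/K$. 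The exact-separation results of \cite{Bai1998No,Bai1999Exact} guarantee that, for all large $N,K$, the contour $\Gamma_m^+$ encloses only the poles $\{\hat{\rho}_k:k\in\mathcal{N}_m\}$, and similarly for $\Gamma_n^{'+}$.

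For $I_2$ I would perform the inner $z_2$-integration first. For $m\neq n$, only the simple poles $\{\hat{\rho}_l:l\in\mathcal{N}_n\}$ lie inside $\Gamma_n^{'+}$, yielding residue $g(z_1)(\hat{\rho}_l^{\,2}/K)(z_1-\hat{\rho}_l)^{-2}$; the subsequent $z_1$-integration over $\Gamma_m^+$ picks up simple poles at $\{\hat{\rho}_k:k\in\mathcal{N}_m\}$ and collects the off-diagonal formula in \eqref{81} immediately. For $m=n$ I would take $\Gamma_m^{'+}$ as the outer contour so that $z_2=z_1$ lies inside; the second-order pole at $z_2=z_1$ contributes residue $g(z_1)g'(z_1)$, but the ensuing outer integral $\oint_{\Gamma_m^+}g(z_1)g'(z_1)dz_1=\frac{1}{2}\oint d(g^2)=0$ vanishes since $g^2$ is single-valued on $\Gamma_m^+$. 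The remaining residues at $\{\hat{\rho}_l:l\in\mathcal{N}_m\}$ require a Laurent expansion of $g$ around each $\hat{\rho}_l$ -- writing $g(z)=(\hat{\rho}_l^{\,2}/K)(z-\hat{\rho}_l)^{-1}+h_l(z)$ with $h_l$ analytic at $\hat{\rho}_l$ and $h_l'(\hat{\rho}_l)=\frac{N-K}{K}-\frac{1}{K}\sum_{j\neq l}\hat{\rho}_j^{\,2}/(\hat{\rho}_l-\hat{\rho}_j)^2$ -- after which telescoping the within-cluster cross terms against the $-$ sign in $h_l'(\hat{\rho}_l)$ reduces the double sum to $\sum_{l\in\mathcal{N}_m}\sum_{j\in\mathcal{N}_m^c}$ and produces exactly the diagonal formula in \eqref{81}.

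For $I_1$ the plan rests on two residue identifications for the integrand $\Phi(z_1,z_2):=\frac{g(z_1)g(z_2)g'(z_1)g'(z_2)}{(g(z_1)-g(z_2))^2}$ viewed as a function of $z_2$: (a) at each simple pole $z_2=\hat{\rho}_l$ of $g$, a Laurent expansion based on $g(z_2)\sim(\hat{\rho}_l^{\,2}/K)(z_2-\hat{\rho}_l)^{-1}$ yields residue $-g(z_1)g'(z_1)$; (b) at each simple zero $z_0$ of $g(z_2)-g(z_1)$ lying inside $\Gamma_n^{'+}$, expanding $g(z_2)-g(z_1)=g'(z_0)(z_2-z_0)+O((z_2-z_0)^2)$ and $g(z_2)g'(z_2)$ to subleading order yields residue $+g(z_1)g'(z_1)$. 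Summing, the inner integral equals $2\pi i\,g(z_1)g'(z_1)\bigl(N_z(z_1)-N_n\bigr)$, where $N_z(z_1)$ is the number of preimages of $g(z_1)$ inside $\Gamma_n^{'+}$, which equals $\frac{1}{2\pi i}\oint_{\Gamma_n^{'+}}\frac{g'(z_2)}{g(z_2)-g(z_1)}dz_2$ by the argument principle. Well-definedness of the original double integral forces $g(\Gamma_m^+)\cap g(\Gamma_n^{'+})=\varnothing$, so as $z_1$ traverses the connected curve $\Gamma_m^+$ its image $g(z_1)$ never crosses $g(\Gamma_n^{'+})$ and $N_z(z_1)-N_n$ remains a fixed integer $w_{mn}$. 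Hence $I_1=\hat{\beta}_{mn}\cdot 2\pi i\,w_{mn}\oint_{\Gamma_m^+}g(z_1)g'(z_1)dz_1=0$, the final equality again being $\oint d(g^2)=0$.

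The main obstacle is the subleading Laurent-coefficient bookkeeping in steps (a) and (b) above: because $g(z_2)-g(z_1)$ appears squared, one must expand both numerator and denominator to one order past the leading singularity and verify the exact cancellation that leaves the $(z_2-\hat{\rho}_l)^{-1}$ coefficient equal to $-1$ (respectively the $(z_2-z_0)^{-1}$ coefficient equal to $+1$), so that the residue depends on $z_1$ only through the clean prefactor $g(z_1)g'(z_1)$. Once this local identification is secured, the global vanishing of $I_1$ is automatic via the argument-principle packaging combined with the exact differential $g(z_1)g'(z_1)\,dz_1=\tfrac{1}{2}d\bigl(g(z_1)^2\bigr)$; the parallel diagonal contribution in $I_2$ is dispatched by the same $\oint d(g^2)=0$ observation, and the remaining diagonal calculation is a telescoping computation on the real-axis partial-fraction expansion of $g$.
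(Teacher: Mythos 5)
Your evaluation of $I_2$ is correct and is essentially the computation in the paper's Appendix G: the partial-fraction form of $g(z)=b_{\underline{\hat{\mathbf{R}}}}(1/z)$ you write down is exactly the expansion the paper works with, the off-diagonal case is the same double residue count, and your diagonal bookkeeping via $h_l'(\hat{\rho}_l)$ at the order-three pole, followed by cancellation of the within-cluster cross terms, reproduces the paper's own telescoping and yields \eqref{81} (the intermediate residues you quote drop the overall minus sign of the integrand, and the $z_2=z_1$ residue is $-g(z_1)g'(z_1)$ rather than $+g(z_1)g'(z_1)$, but neither slip affects the final formulas since that term integrates to zero either way, $g\,g'\,dz_1=\tfrac12\,d(g^2)$). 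Where you genuinely depart from the paper is $I_1$: the paper fixes one variable, asserts that the only poles inside the other contour are the simple poles at the sample eigenvalues (plus $z_1=z_2$ when $m=n$), and then gets the vanishing by checking directly that the residues of $b_{\underline{\hat{\mathbf{R}}}}(1/z)\,\partial b_{\underline{\hat{\mathbf{R}}}}(1/z)$ at its higher-order poles are zero; you instead obtain the vanishing from the exact differential $\oint d(g^2)=0$ and, more substantively, you allow for additional second-order poles of $(g(z_1)-g(z_2))^{-2}$ at preimages $g(z_2)=g(z_1)$, showing their residues ($+g(z_1)g'(z_1)$) combine with the eigenvalue residues ($-g(z_1)g'(z_1)$) into $g(z_1)g'(z_1)$ times a winding number that is constant along $\Gamma_m^+$. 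Both local residue identifications check out, so your route covers a configuration the paper's proof silently assumes away (a rational function of this Herglotz type need not be injective on a half-plane), at the price of the premise $g(\Gamma_m^+)\cap g(\Gamma_n^{'+})=\emptyset$, which you justify only by well-definedness of $\hat{\bm{\Theta}}_{mn}$; that is acceptable, since the paper needs the same tacit assumption for its residue enumeration, but it should be stated as a hypothesis rather than "forced". Two small repairs: the argument-principle integral you display equals $N_z(z_1)-N_n$ (zeros minus poles), not $N_z(z_1)$, although you use the correct combination afterwards; and the subleading Laurent bookkeeping you flag as the main obstacle (and the issue of non-simple preimages) can be bypassed entirely with the identity $\frac{g(z_2)g'(z_2)}{(g(z_2)-g(z_1))^2}=\frac{g'(z_2)}{g(z_2)-g(z_1)}-g(z_1)\,\partial_{z_2}\bigl[(g(z_2)-g(z_1))^{-1}\bigr]$, whose second term integrates to zero over $\Gamma_n^{'+}$ and whose first term gives the winding-number form at once.
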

\begin{proof}
	See Appendix G in Supplementary Material.
\end{proof}

Substituting the results of Proposition \ref{I1 and I2} into $\hat{\bm{\Theta}}_{mn} = I_1+I_2$, we obtain \eqref{the expression of thetakl}, which completes the proof of Theorem \ref{Theorem the consistent estimator for Theta}.

\section{Numerical Examples}
In this section, we will present numerical examples to demonstrate the superiority of the proposed estimator and validate the accuracy of the Gaussian approximation. We consider two scenarios to assess the estimation performance of the proposed estimator, depending on whether the multiplicities of the eigenvalues of $\mathbf{M}_N$ are known. In Section V-A, assuming that the multiplicities are known, we compare the estimation performance of the proposed estimator against the maximum likelihood (ML) estimator and Mestre's estimator \cite{Mestre2008Improved}. Additionally, we compare the computational complexity of the proposed estimator with that of Mestre's estimator. In Section V-B, assuming that the multiplicities of $\mathbf{M}_N$ are unknown, we conduct a comparative study on the estimation performance of proposed estimator with the ML estimator, Mestre's estimator, and the moment method developed in \cite{Yao2012Eigenvalue}. Finally, in Section V-C, we verify by simulations the accuracy of the CLT and the Gaussian approximation respectively stated in Theorems \ref{Theorem Second-order asymptotic behavior} and \ref{Theorem the consistent estimator for Theta}. 


\subsection{ Performance Comparisons with Known Multiplicities}
\subsubsection{Performance comparisons of the proposed estimator against Mestre's estimator and ML estimator}
The ML estimator (see for instance \cite[Theorem 9.3.1]{Muirhead1982Aspects}, \cite[Theorem 2]{Anderson1963Asymptotic}, and  \cite[Theorem 2]{Mestre2008On}) and Mestre's improved estimator (see \cite[Theorem 3]{Mestre2008Improved}) are traditionally employed to estimate the eigenvalues of covariance matrix. In this study, we invert them to estimate the eigenvalues of precision matrix. It is worth noting that the inverted versions of these two estimators are still referred to as the ML estimator and Mestre's estimator, respectively. In the subsequent comparisons, we will utilize two key metrics: bias and mean square error (MSE),  to quantitatively evaluate the performance of the proposed estimator against ML and Mestre's estimators. In our experiments, the bias and MSE are respectively calculated by
\begin{equation}\label{key}
	{\rm bias} \triangleq \sum_{i=1}^L \left|\hat{m}_i-\gamma_i\right|,
\end{equation}
and 
\begin{equation}\label{key}
	{\rm MSE} \triangleq \sum_{i=1}^L\left(\frac{1}{N_S}\sum_{k=1}^{N_S}(\hat{\gamma}_i^{(k)}-\gamma_i)^2\right)
\end{equation}
where $\hat{m}_i=\frac{1}{N_S}\sum_{k=1}^{N_S}\hat{\gamma}_i^{(k)}$; $N_S$ denotes the numbers of simulation trials, and $\hat{\gamma}_i^{(k)}$ denotes the estimated $i$th eigenvalues at $k$th simulation. 
\begin{figure}[]
	\centering
	\subfloat[bias]{
		\centering
		\includegraphics[trim=1cm 0cm 1cm 1.8cm, scale=0.4]{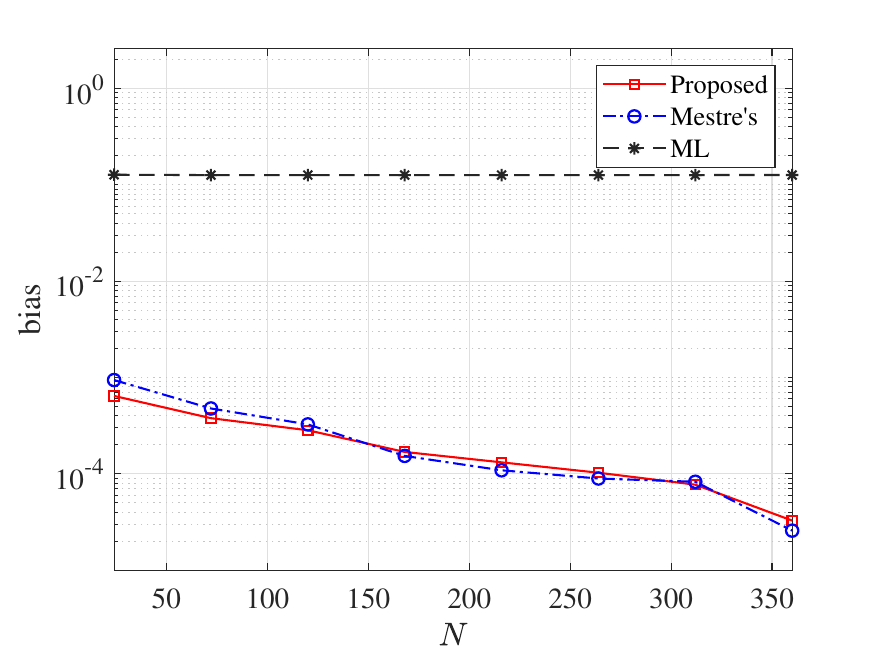}
	}\\
	\subfloat[MSE]{
		\centering
		\includegraphics[trim=0.5cm 0 1cm 1cm, scale=0.4]{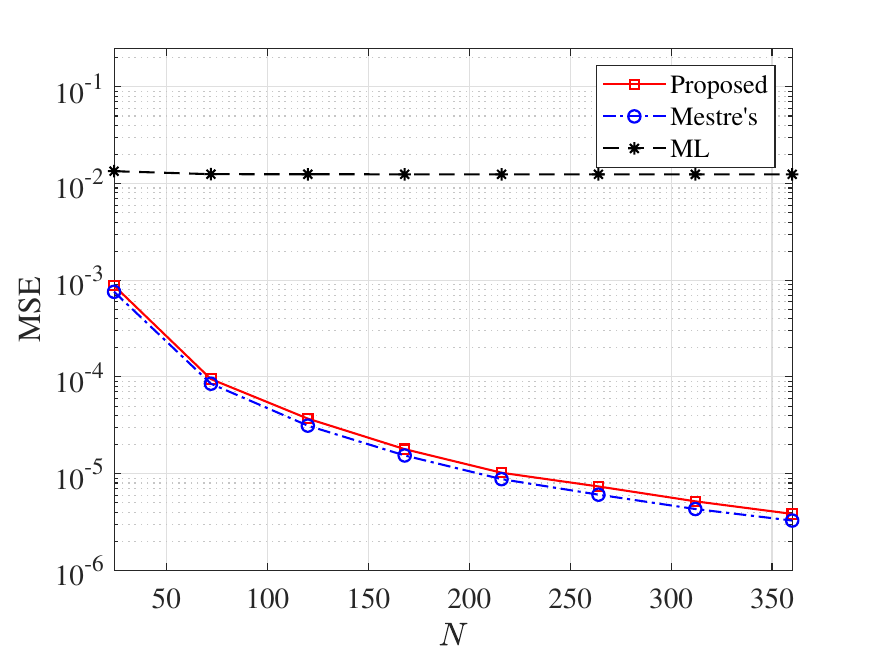}
	}\\
	\caption{Performance comparisons when the separability condition is satisfied. The experiment parameters are as follows: $\frac{N}{K}=\frac{3}{20}$, $(\rho_1,\rho_2,\rho_3) = (1,1/3,1/7)$, $(\frac{N_1}{N},\frac{N_2}{N},\frac{N_3}{N}) = \left(0.5,0.25,0.25\right)$, and $N$ increases from 24 to 360.}
	\label{Performance comparisons when the separability condition is satisfied.}\vspace{-0.5em}
\end{figure}

In what follows, we set $N_S = 1000$, that is, we will conduct 1000 simulation trials. We distinguish two cases to discuss depending on weather the separability condition (i.e., assumption (A4)) holds true or not.

We first consider the case where the separability condition is satisfied. The comparisons results are depicted in Fig. \ref{Performance comparisons when the separability condition is satisfied.}. We assume that the population matrix $\mathbf{R}_N$ has three distinct eigenvalues $(\lambda_1,\lambda_2,\lambda_3)=(1,3,7)$ with respective multiplicities $N_1$, $N_2$ and $N_3$ such that $N_1/N=0.5$, $N_2/N=N_3/N=0.25$. Additionally, we set $\frac{N}{K} = \frac{3}{20}$. Notice that these parameters are the same as those of the first experiment in  \cite{Yao2012Eigenvalue}. Correspondingly, the eigenvalues of  precision matrix $\mathbf{M}_N$ are $(\gamma_1,\gamma_2,\gamma_3)=(1,1/3,1/7)$ with the multiplicities $N_1$, $N_2$ and $N_3$. From Fig. \ref{Performance comparisons when the separability condition is satisfied.}, it can be observed that, the ML estimator, as an inconsistent estimator under large dimensional regime (as shown in \cite{Mestre2008On}), suffers from the worst performance in terms of bias. On the contrary, the proposed estimator along with Mestre's estimator stands out in terms of their accuracy, evidenced by their low biases. This means that both the proposed estimator and Mestre's estimator achieve the consistency when $N$ and $K$ increase at the same rate. If we use MSE as a metric to assess the overall performance, we find that the Mestre's estimator achieves a marginally lower MSE than our proposed estimator. However, as we shall further discuss in the subsequent subsection, the superiority of the Mestre's estimator actually comes with the cost of relatively high computational complexity.
\begin{figure}[]
	\centering
	\subfloat[bias]{
		\centering
		\includegraphics[trim=1cm 0 1cm 1.8cm, scale=0.4]{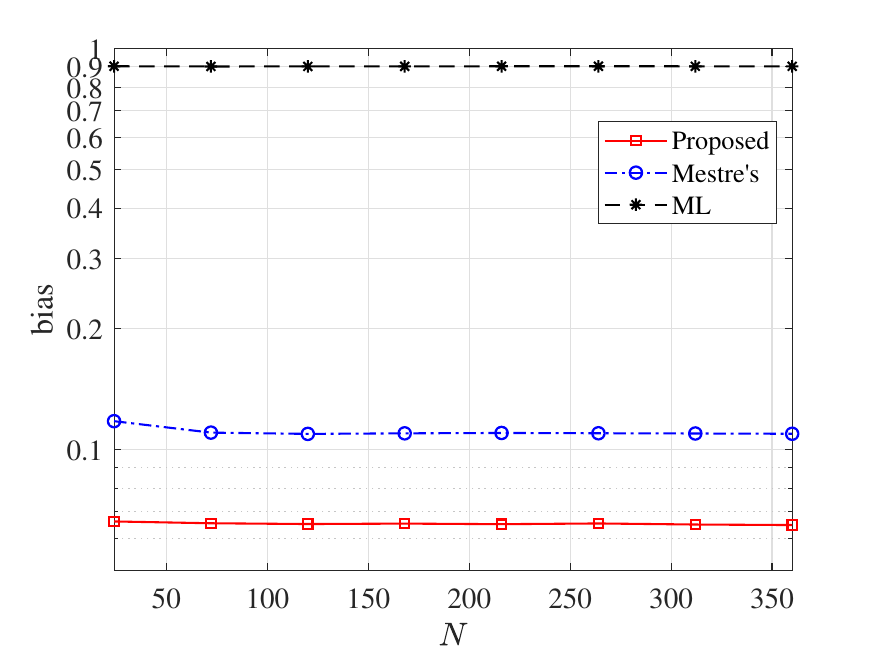}
	}\\
	\subfloat[MSE]{
		\centering
		\includegraphics[trim=0.5cm 0 1cm 1cm, scale=0.4]{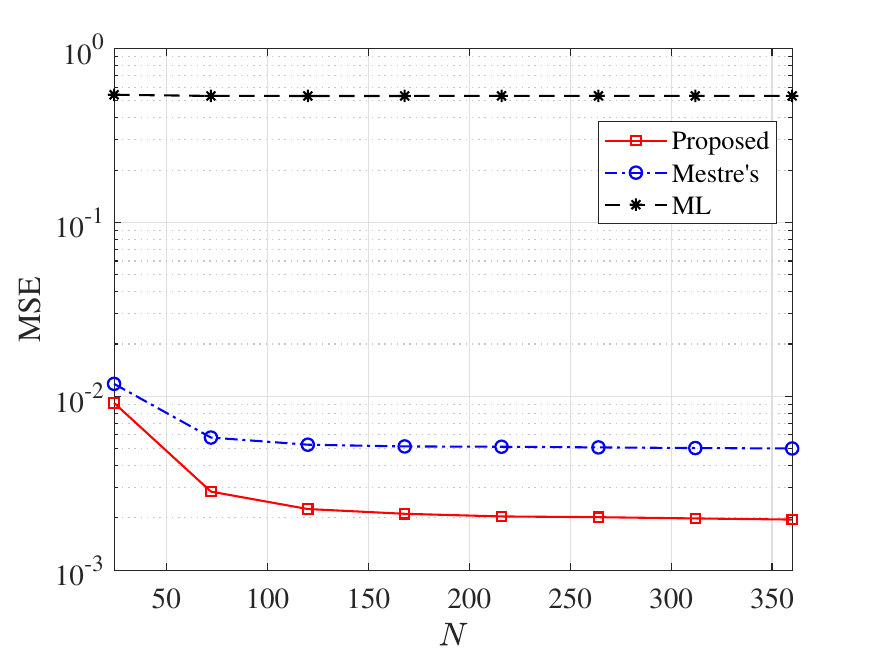}
	}\\
	\caption{Performance comparisons when the separability condition does not hold true. The experiment parameters are as follows: $\frac{N}{K}=\frac{3}{8}$, $(\rho_1,\rho_2,\rho_3) = (1,1/2,1/3)$, $(\frac{N_1}{N},\frac{N_2}{N},\frac{N_3}{N}) = \left(1/3,1/3,1/3\right)$, and $N$ increases from 24 to 360.}
	\label{Performance comparisons when the separability condition does not hold true.}\vspace{-1em}
\end{figure}

Next we consider the case where the separability condition does not hold true. Actually, this case is much more realistic in practical applications, as stated in \cite{Yao2012Eigenvalue}. Here, we use the same parameters as those of the second experiment in \cite{Yao2012Eigenvalue}. Specifically, we assume that the covariance matrix $\mathbf{R}_N$ has 3 different eigenvalues $(\lambda_1,\lambda_2,\lambda_3) = (1,2,3)$ with respective multiplicities $N_1$, $N_2$ and $N_3$ such that $\frac{N_1}{N} = \frac{N_2}{N}=\frac{N_3}{N}=\frac{1}{3}$. Accordingly, the precision matrix has three distinct eigenvalues $(\gamma_1,\gamma_2,\gamma_3)=(1,1/2,1/3)$ with 
respective multiplicities $N_1$, $N_2$ and $N_3$. The performance comparison results in terms of bias and MSE are given in Fig. \ref{Performance comparisons when the separability condition does not hold true.}. In this experiment, we set $\frac{N}{K}=\frac{3}{8}$.  As expected, Fig. \ref{Performance comparisons when the separability condition does not hold true.} reveals that the ML estimator has the worst performance. It exhibits an unacceptably large bias, resulting in a high MSE. In contrast, the proposed estimator exhibits the smallest bias.  We can see from Fig. \ref{Performance comparisons when the separability condition does not hold true.}(b) that the proposed estimator outperforms Mestre's estimator, if using MSE as a metric to evaluate the overall performance. This fact illustrates the  superiority of the proposed estimator in the case where the separability condition is not met.

The above numerical experiments focus on the estimation performance in estimating the individual eigenvalue. However, in many applications, such as the hypothesis test and discriminant analysis \cite{Bodnar2016Direct}, estimating functionals of the precision matrix, such as $\frac{1}{N}{\rm tr}(\mathbf{M}_N)$ and/or $\frac{1}{N}{\rm tr}(\mathbf{M}_N^2)$, is of significant interest. Hence, in the following, we consider estimating $\frac{1}{N}{\rm tr}(\mathbf{M}_N)$ to compare the performance of the proposed estimator, Mestre's estimator and ML estimator. We denote $g_1 = \frac{1}{N}{\rm tr}(\mathbf{M}_N)$. By employing the estimated eigenvalues, we estimate $m_1$ by
	\begin{equation}\label{key}\nonumber
		\hat{g}_1= \frac{1}{N}\sum_{i=1}^LN_i\hat{\gamma}_i.
	\end{equation}

	We still distinguish two cases to discuss based on whether the separability condition holds. We use the MSE to evaluate the estimation performance. The MSE for $\hat{g}_1$ is calculated by 
	\begin{equation}\label{key}\nonumber
		{\rm MSE} = \frac{1}{N_s}\sum_{k=1}^{N_s}|\hat{g}_1^{(k)}-g_1|^2
	\end{equation}
where $N_s$ is the number of simulation trials and $\hat{g}_1^{(k)}$ is the estimated $\hat{g}_1$ at $k$th simulation. We set $N_s=1000$ in our experiments.

The results are illustrated in Fig. \ref{Performance comparisons in estimating trM}(a) and (b), respectively. It should be pointed out that the experiment parameters are identical to those in Figs. \ref{Performance comparisons when the separability condition is satisfied.} and \ref{Performance comparisons when the separability condition does not hold true.}. As expected, the ML estimator, as an inconsistent estimator, performs the worst regardless of whether the separability condition is met. The proposed estimator and Mestre's estimator exhibits comparable performance when the separability condition holds true, as shown in Fig. \ref{Performance comparisons in estimating trM}(a). However, when the separability condition is not satisfied, as shown in Fig. \ref{Performance comparisons in estimating trM}(b), the performance of Mestre's estimator deteriorates significantly, while the proposed estimator still maintains excellent performance.

\subsubsection{Comparison of computational complexity between the proposed estimator and Mestre's estimator}
Given that ML estimator shows poor performance in large dimensional regime, we here will only compare the computational complexity of the proposed estimator and Mestre's estimator. Reviewing Mestre's estimator (see Theorem 3 in \cite{Mestre2008Improved}), we find that it involves a quantity $\hat{\mu}$, which necessitates solving a polynomial equation  of degree $N$ (refer to Eq. (19) in \cite{Mestre2008Improved}).  The computational complexity of solving a high-degree polynomial equation rises significantly with the increase of degree $N$.  In practice, the resolution of this high-degree polynomial equation relies heavily on  iterative methods like Newton's method, which typically require numerous iterations to converge, making the entire process extremely inefficient and time consuming. Conversely, our estimator simply combines sample estimates, thereby achieving high computational efficiency.
\begin{figure}[] 
	\centering
	\subfloat[Separability condition holds true]{
		\centering
		\includegraphics[trim=1cm 0 1cm 1.8cm, scale=0.4]{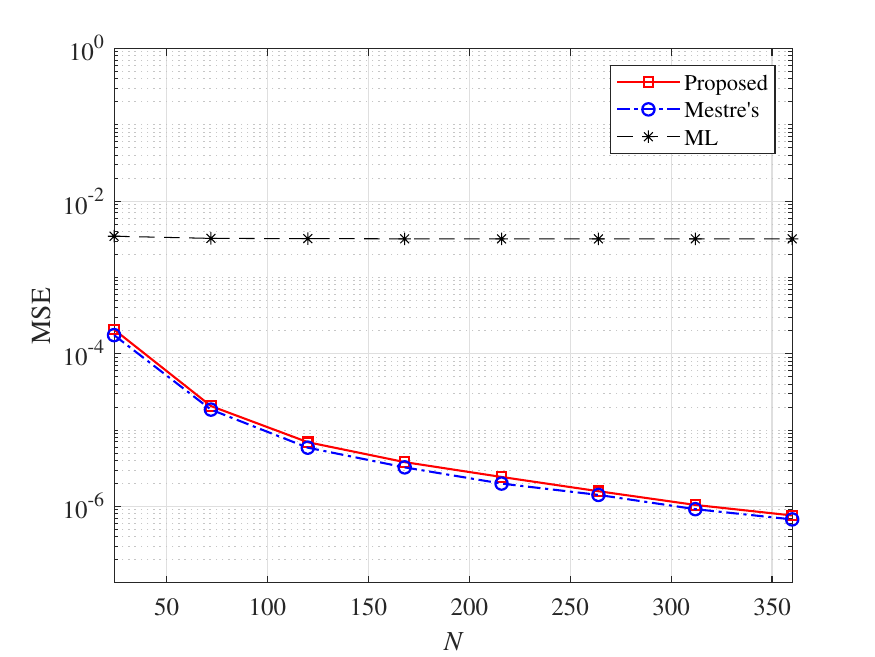}
	}\\
	\subfloat[Separability condition does not hold true]{
		\centering
		\includegraphics[trim=0.5cm 0 1cm 1cm, scale=0.4]{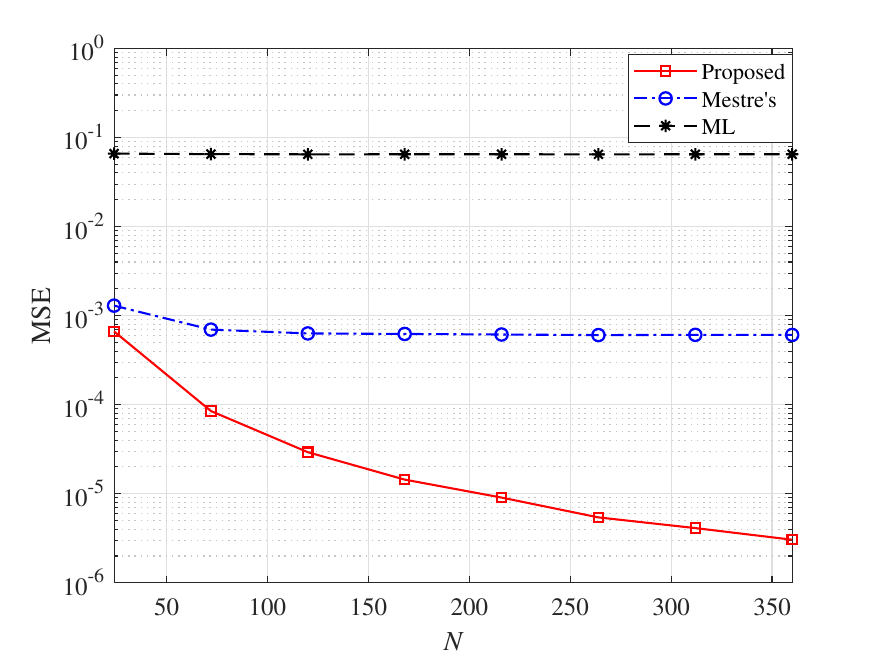}
	}\\
	\caption{Performance comparisons in estimating $g_1=\frac{1}{N}{\rm tr}(\mathbf{M}_N)$ when the multiplicities of eigenvalues of $\mathbf{M}_N$ are known. The experiment parameters are identical to those in Figs. \ref{Performance comparisons when the separability condition is satisfied.} and \ref{Performance comparisons when the separability condition does not hold true.}.}
	\label{Performance comparisons in estimating trM}
	\vspace{-1em}
\end{figure}
\begin{figure}[]
	\centering
	\subfloat[Separability condition holds true]{
		\centering
		\includegraphics[trim=1cm 0 1cm 1.8cm, scale=0.4]{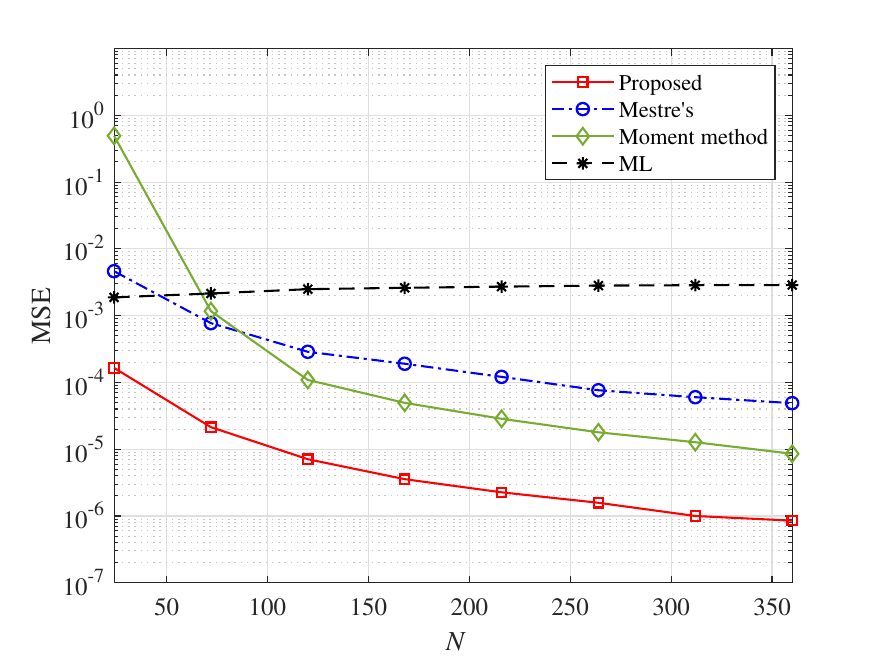}
	}\\
	\subfloat[Separability condition does not hold true]{
		\centering
		\includegraphics[trim=0.5cm 0 1cm 1cm, scale=0.4]{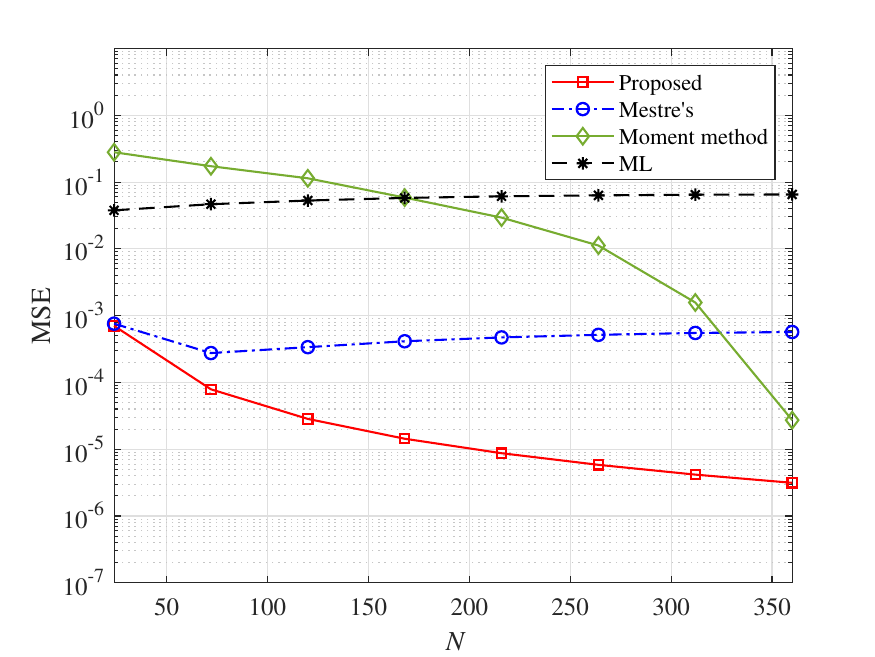}
	}\\
	\caption{Performance comparisons in estimating $g_1=\frac{1}{N}{\rm tr}(\mathbf{M}_N)$ in the case where the multiplicities of eigenvalues of $\mathbf{M}_N$ are unknown. The experiment parameters are identical to those in Figs. \ref{Performance comparisons when the separability condition is satisfied.} and \ref{Performance comparisons when the separability condition does not hold true.}.}
	\label{Performance comparisons in estimating trM in the case where the multiplicities of eigenvalues of M are unknown}
\end{figure}

In Table \ref{Execution Time}, we present comparisons of execution times between Mestre's estimator and the proposed estimator. The parameters are identical to those in the experiment in Fig. \ref{Performance comparisons when the separability condition does not hold true.}. For Mestre's estimator, the polynomial equation is solved utilizing the  \texttt{vpasolve} function in MATLAB.  Evidently, our estimator offers a substantial reduction in computational complexity compared to Mestre's estimator. Notably, the complexity of Mestre's estimator tends to increase extremely as $N,K$ increase.

\begin{table}[ht]
	\centering
	\caption{\textsc{Execution Time of the Proposed Estimator and Mestre's Estimator for One Realization (in Seconds).}}
	\label{Execution Time}
	\renewcommand{\arraystretch}{1.2}
	\resizebox{8cm}{!}{%
		\begin{tabular}{ccc}
			\toprule[1pt]
			\multicolumn{1}{c}{$N,K$}         & \multicolumn{1}{c}{Proposed estimator} & \multicolumn{1}{c}{Mestre's estimator} \\ \hline
			$N=30,K=80$   & 0.047s & 1.13s \\
			$N=120,K=320$ & 0.054s & 7.66s \\ 
			\multicolumn{1}{c}{$N=300,K=800$} & \multicolumn{1}{c}{0.066s}             & \multicolumn{1}{c}{73.41s}             \\ \bottomrule[1pt]
		\end{tabular}%
	}\vspace{-1em}
\end{table}
\subsection{Performance Comparisons with Unknown Multiplicities}
In practical applications, the multiplicities of eigenvalues of covariance/precision matrix are usually unknown and hence need to be estimated from the observations. In \cite{Yao2012Eigenvalue}, Yao \textit{et al}. proposed a moment-based estimator for the eigenvalues of covariance matrix, along with the corresponding multiplicities. It is worth mentioning that the moment method does not rely on the separability assumption and hence outperforms Mestre's estimator when the separability does not hold true (see \cite{Yao2012Eigenvalue} for details). 
	
In what follows, we will make a comparative study on the estimation performance of the proposed estimator, Mestre's estimator, moment method and ML estimator. We consider estimating $g_1=\frac{1}{N}{\rm tr}(\mathbf{M}_N)$ by using the following formula:
\begin{equation}\nonumber
	\hat{g}_1= \frac{1}{N}\sum_{i=1}^L\hat{N}_i\hat{\gamma}_i.
\end{equation}
Notice that different from the numerical examples presented in Section V-A, here we do not utilize the prior information about the multiplicities but to estimate them using the moment method. In other words, the estimated multiplicities $\hat{N}_i$ ($i=1,2,..,L$) are obtained by the moment method. 

The comparison results are presented in Fig. \ref{Performance comparisons in estimating trM in the case where the multiplicities of eigenvalues of M are unknown}. It can be seen that, regardless of whether the separability condition holds, the proposed estimator consistently outperforms Mestre's estimator, the moment method and ML estimator. This demonstrates the superiority of the proposed estimator, even in the scenario where the multiplicities are unknown. However, we want to emphasize that the proposed estimator lacks the capability to estimate the multiplicities. In practical applications, it may be beneficial to consider integrating the moment method with the proposed estimator. Specifically, one could initially use the moment method to estimate the unknown multiplicities and subsequently apply these estimated multiplicities to the proposed estimator in order to enhance the estimation performance.

\subsection{Accuracy of the CLT and Gaussian Approximation}
Finally, we conduct simulations to verify the results in Theorems \ref{Theorem Second-order asymptotic behavior} and \ref{Theorem the consistent estimator for Theta}. We consider the case where $N=60,K=120$ and the precision matrix has two different eigenvalues $(\gamma_1,\gamma_2)=(1,1/5)$ with respective multiplicities $N_1=40$ and $N_2=20$.
Fig. \ref{Comparisons between the histograms of estimated eigenvalues and the limiting spectral densities} depicts the histograms for each estimated eigenvalues and the corresponding limiting spectral densities based on Theorems \ref{Theorem Second-order asymptotic behavior} and \ref{Theorem the consistent estimator for Theta}. We observe that the histograms are in perfect agreement with the theoretical results, thus validating the accuracy of Theorems \ref{Theorem Second-order asymptotic behavior} and \ref{Theorem the consistent estimator for Theta}.
\begin{figure}[]
	\vspace{1em}
	\centering
	\subfloat[Fluctuation of $\hat{\gamma}_1$]{
		\centering
		\includegraphics[trim=1cm 0 1cm 1cm, scale=0.4]{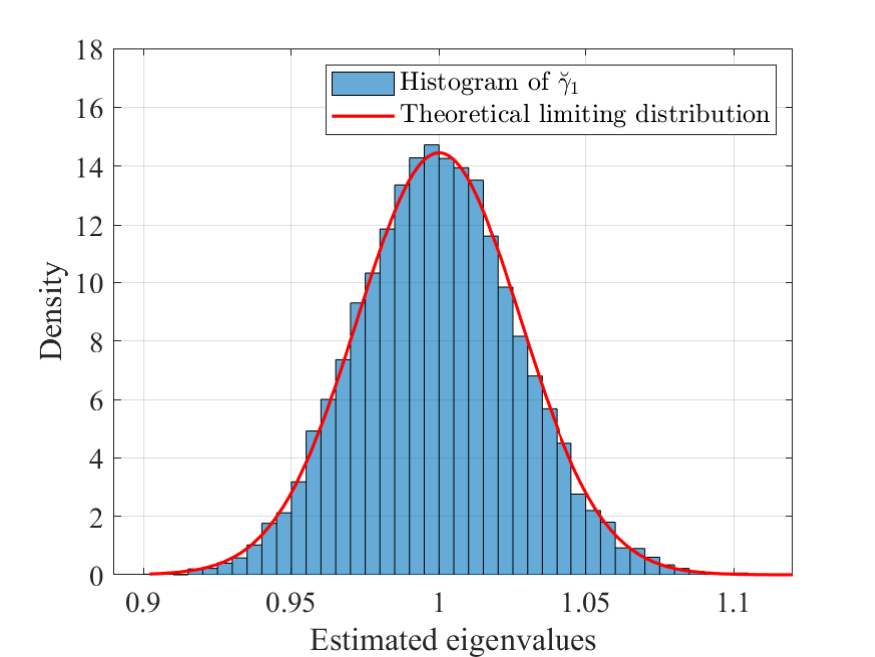}
	}\\
	\subfloat[Fluctuation of $\hat{\gamma}_2$]{
		\centering
		\includegraphics[trim=0.5cm 0 1cm 0.8cm, scale=0.4]{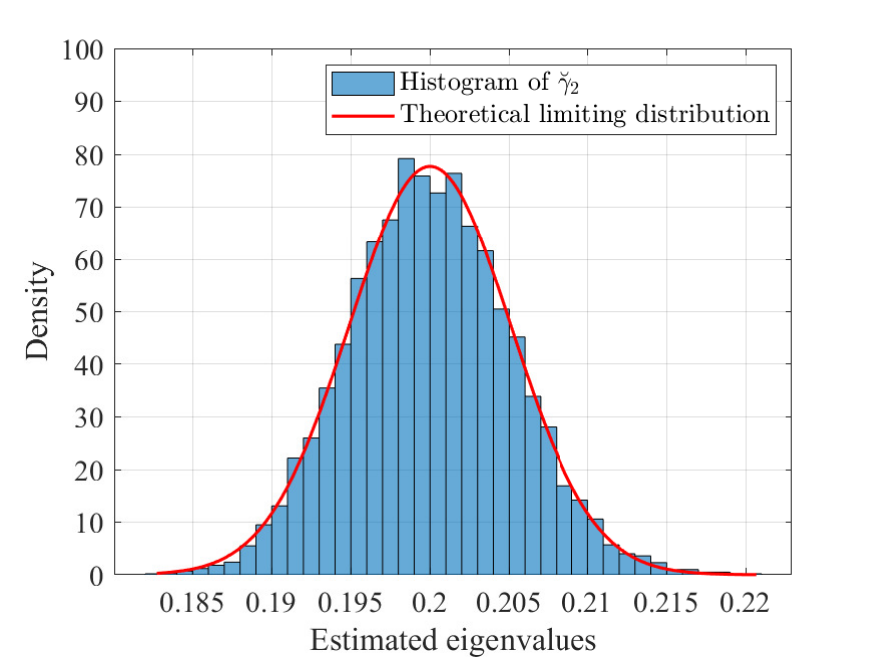}
	}
	\caption{Comparisons between the histograms of estimated eigenvalues and the limiting spectral densities based on Theorems \ref{Theorem Second-order asymptotic behavior} and \ref{Theorem the consistent estimator for Theta}.}
	\label{Comparisons between the histograms of estimated eigenvalues and the limiting spectral densities}\vspace{-1em}
\end{figure}

\section{Conclusion}
In this paper, we construct an improved estimator for eigenvalues of precision matrix, without inverting the eigenvalue estimator of covariance matrix. We prove the consistency of the proposed estimator under large dimensional regime. In order to obtain the asymptotic bias of the proposed estimator, we establish a theoretical result that characterizes the convergence rate of expected Stieltjes transform (with its derivative) of the spectra of SCM. This is a theoretical contribution of this paper.  Using this result, we prove that the bias of the proposed estimator is $O\left(\frac{1}{K^2}\right)$ term. We also highlight that using this result, one can also control the bias term of Mestre's improved estimator by following the same routine presented in this paper. We further  provide a CLT to describe  the asymptotic fluctuation of the proposed estimator. We also give an empirical but asymptotically accurate approximation of the covariance for the CLT. The simulation results show that the proposed estimator exhibits excellent performance, especially in the case where the spectral separation condition is not satisfied. In addition, the advantage of the proposed estimator lies in its high computational efficiency, making it a more viable choice for practical implementations.

We emphasize that, in this work, we restrict our analysis to the case $N/K<1$ (see assumption (A2)) to ensure the invertibility of the sample covariance matrix. However, in practical applications, the case $N/K>1$ may occur, where the sample covariance matrix becomes non-invertible. In future work, it would be interesting to explore the extension of our methodology to handle this case. A potential solution is to use the Moore-Penrose inverse of the sample covariance matrix, whose limiting spectral behavior udder large dimensional regime has been established by Bodnar, Dette, and Parolya \cite{Bodnar2016Spectral}. By applying their results and adapting the methodology developed in this paper, a similar estimator could be derived. 

\section*{Acknowledgment}
The authors would like to thank the editors and anonymous reviewers who pointed out a number of constructive improvements. The authors would also like to thank Ms. Yusi Fu and Dr. Xinzhong Hu for their assistance in validating part of the simulation results.
\bibliographystyle{IEEEtran}
\bibliography{IEEEexample}

\clearpage
\section*{Supplementary Material}
This document provides some additional proofs of results in the article ``Direct Estimation of Eigenvalues of Large Dimensional Precision Matrix”.
\section*{Appendix A\\Proof of Theorem 3}
Since we assume that $\mathbf{R}_N$ is an Hermitian matrix, it can be decomposed as $\mathbf{R}_N= \mathbf{U}^H\mathbf{\Lambda}\mathbf{U}$ where $\mathbf{\Lambda} = {\rm diag}(\underbrace{\lambda_1}_{N_1},\underbrace{\lambda_2}_{N_2},\cdots,\underbrace{\lambda_L}_{N_L})$ and $\mathbf{U}$ is a unitary matrix composed of eigenvectors of $\mathbf{R}_N$. Taking the unitary transformations $\bm{\mathcal{Y} }   = \mathbf{U}\mathbf{Y}$ and $\bm{\mathcal{X} } = \mathbf{U}\mathbf{X}$, we have $\bm{\mathcal{Y} }  = \mathbf{\Lambda}^{1/2}\bm{\mathcal{X} } $. By using $\bm{\mathcal{Y}}$, we obtain a new SCM defined by $\breve{\mathbf{R}}_N = \bm{\mathcal{Y} } \bm{\mathcal{Y} }^H$. It is easy to check that $\hat{\mathbf{R}}_N = \mathbf{U}^H\breve{\mathbf{R}}_N\mathbf{U}$, which implies that $\breve{\mathbf{R}}_N$ and $\hat{\mathbf{R}}_N$ have the same eigenvalues. Hence we have $b_{\hat{\mathbf{R}}}(z) = b_{\breve{\mathbf{R}}}(z) $, or equivalently $Eb_{\hat{\mathbf{R}}}(z) = Eb_{\breve{\mathbf{R}}}(z) $. Therefore, in order to prove Theorem 3, it is sufficient to prove 
\setcounter{equation}{87}
\begin{equation}\label{77}
	\left|Eb_{\breve{\mathbf{R}}}(z) - b_K(z)\right| \leqslant\frac{1}{K^2}P_9(|z|)P_{12}(|{\rm Im}z|^{-1}),
\end{equation}
and
\begin{equation}\label{78}
	\left|Eb_{\breve{\mathbf{R}}}'(z) - b'_K(z)\right| \leqslant\frac{1}{K^2}P_{15}(|z|)P_{20}(|{\rm Im}z|^{-1}).
\end{equation}

We remark that our proof relies on the ideas developed by Haagerup and Thorbjornsen \cite{Haagerup2005A}, and extensively used by Capitaine \textit{et al.} \cite{Capitaine2007Strong}, and Vallet, Loubaton and Mestre \cite{Vallet2012Improved} (see also \cite{Vallet2010ImprovedOnLine}).

We first establish \eqref{77}. We define  two matrices
\begin{equation}\label{key}
	\mathbf{Q}_N(z) = \left(\breve{\mathbf{R}}-z\mathbf{I}_N\right)^{-1} = \left(\bm{\mathcal{Y} }{\bm{\mathcal{Y} }^H}-z\mathbf{I}_N\right)^{-1}
\end{equation}
and
\begin{equation}\label{key}
	\tilde{\mathbf{Q}}_N(z) = \left(\bm{\mathcal{Y} }^H{\bm{\mathcal{Y} }}-z\mathbf{I}_K\right)^{-1}.
\end{equation}

In what follows, the subscript $N$ will be omitted when there is no danger of ambiguity. 

According to the definition of Stieltjes transform, we have
\begin{equation}\label{key}
	b_{\breve{\mathbf{R}}}(z) = \frac{1}{N}{\rm tr}{\mathbf{Q}(z)}.
\end{equation}

We also define the matrix
\begin{equation}\label{T(z)}
	\mathbf{T}(z) = -\frac{1}{z}\left(\tilde{\alpha}(z)\mathbf{\Lambda}+\mathbf{I}_N\right)^{-1}
\end{equation}
where
\begin{equation}\label{key}
	\tilde{\alpha}(z) = \alpha(z)- \frac{1-c_K}{z}
\end{equation}
and 
\begin{equation}\label{83}
	\alpha(z) = 	c_K{b}_K(z) = \frac{1}{K}{\rm tr}\mathbf{T}(z).
\end{equation}

We also define a matrix
\begin{equation}\label{V(z)}
	\mathbf{V}(z) = -\frac{1}{z}\left(\tilde{\beta}(z)\mathbf{\Lambda}+\mathbf{I}_N\right)^{-1}
\end{equation}
where
\begin{equation}\label{key}
	\tilde{\beta}(z) =\beta(z)- \frac{1-c_K}{z}
\end{equation}
and 
\begin{equation}\label{key}
	\beta(z)=c_KEb_{\breve{\mathbf{R}}}(z) =\frac{1}{K}E{\rm  tr}\mathbf{Q}(z)
\end{equation}
Apparently, $\tilde{\beta}(z) = \frac{1}{K}E{\rm tr}\tilde{\mathbf{Q}}(z)$. Then in order to prove \eqref{77}, it suffices to prove 
\begin{equation}\label{sup beta-alpha}
	|\beta(z)-\alpha(z)|=|\tilde{\beta}(z)-\tilde{\alpha}(z)| \leqslant\frac{1}{K^2}P_9(|z|)P_{12}(|{\rm Im}z|^{-1}).
\end{equation}

In what follows, we will denote  $\alpha(z)$,  $\beta(z)$, $\tilde{\alpha}(z)$ and $\tilde{\beta}(z)$ by $\alpha$,  $\beta$, $\tilde{\alpha}$ and $\tilde{\beta}$ respectively for the sake of clarity of presentation.

We express $\tilde{\beta}-\tilde{\alpha}$ by
\begin{equation}\label{beta-alpha}
	\begin{aligned}
		\tilde{\beta}-\tilde{\alpha} &= E\left(\frac{1}{K}{\rm tr}\mathbf{Q}(z)\right) - \frac{1}{K}{\rm tr}\mathbf{T}(z)
		\\& = \frac{1}{K}{\rm tr}\left(\mathbf{V}(z) - \mathbf{T}(z)\right) + \epsilon(z)
	\end{aligned}
\end{equation}
where
\begin{equation}\label{key}
	\epsilon(z) = \frac{1}{K}{\rm tr}\left(E\mathbf{Q}(z) - \mathbf{V}(z)\right).
\end{equation}

Notice that 
\begin{equation}\label{V-T}
	\begin{aligned}
		\mathbf{V}(z) - \mathbf{T}(z) &=\mathbf{V}(z)\left(\mathbf{T}^{-1}(z) - \mathbf{V}^{-1}(z)\right) \mathbf{T}(z)
		\\& = z(\tilde{\beta}-\tilde{\alpha})\mathbf{V}(z)\mathbf{\Lambda}\mathbf{T}(z).
	\end{aligned}
\end{equation}

Substituting \eqref{V-T} into \eqref{beta-alpha} yields
\begin{equation}\label{103}
	\tilde{\beta}-\tilde{\alpha} = (\tilde{\beta}-\tilde{\alpha})\frac{z}{K}{\rm tr}(\mathbf{V}(z)\mathbf{\Lambda}\mathbf{T}(z) )+ \epsilon(z)
\end{equation}
which is equivalent to
\begin{equation}\label{beta-alpha 2}
	\tilde{\beta}-\tilde{\alpha} = \frac{\epsilon(z)}{1-\frac{z}{K}{\rm tr}(\mathbf{V}(z)\mathbf{\Lambda}\mathbf{T}(z))}.
\end{equation}

In what follows,  we will follow the tricks in \cite{Vallet2012Improved} to find the bound of \eqref{beta-alpha 2}.  From \eqref{83}, we have
\begin{equation}\label{Im alpha}
	{\rm Im}(\alpha) = \frac{1}{K}{\rm tr}\left({\rm Im}\mathbf{T}(z)\right)
\end{equation}

Notice that 
\begin{equation}\label{Im T}
	\begin{aligned}
		{\rm Im}\mathbf{T}_N(z) &= \frac{1}{2\mathsf{i}}\left(\mathbf{T}(z)-\mathbf{T}(z)^{H}\right)
		\\& = \frac{1}{2\mathsf{i}}\left(\mathbf{T}(z)\left(\mathbf{T}(z)^{-H} - \mathbf{T}^{-1}(z)\right)\mathbf{T}(z)^H\right)
		\\& = \mathbf{T}(z)\mathbf{\Lambda}\mathbf{T}(z)^H{\rm Im}(z\tilde{\alpha}) + \mathbf{T}(z)\mathbf{T}(z)^H{\rm Im}(z).
	\end{aligned}
\end{equation}

Using \eqref{Im T} in \eqref{Im alpha} and noticing that ${\rm Im}(z\tilde{\alpha}) = {\rm Im}(z\alpha)$, we have
\begin{equation}\label{Im alpha2}
	{\rm Im}({\alpha}) = \frac{{\rm Im}(z{\alpha})}{K}{\rm tr}(\mathbf{T}(z)\mathbf{\Lambda}\mathbf{T}(z)^H) +\frac{{\rm Im}(z)}{K}{\rm tr}(\mathbf{T}(z)\mathbf{T}(z)^H).
\end{equation}

Likewise, we have
\begin{equation}\label{key}
	{\rm Im}(z\alpha) = \frac{1}{K}{\rm tr}\left({\rm Im}\left(z\mathbf{T}(z)\right)\right).
\end{equation}

By using the facts that
\begin{equation}\label{Im zT}
	\begin{aligned}
		&{\rm Im}\left(z\mathbf{T}(z)\right) 
		\\&= \frac{1}{2\mathsf{i}}\left(z\mathbf{T}(z) - \left(z\mathbf{T}(z)\right)^H\right)
		\\&=\frac{1}{2\mathsf{i}}\left(z\mathbf{T}(z)\left(\left(z\mathbf{T}(z)\right)^{-H} - \left(z\mathbf{T}(z)\right)^{-1} \right) \left(z\mathbf{T}(z)\right)^H  \right)
		\\& =|z|^2 \mathbf{T}(z)\mathbf{\Lambda}\mathbf{T}(z)^H{\rm Im}(\tilde{\alpha})
	\end{aligned}
\end{equation}
and 
\begin{equation}\label{key}
	\begin{aligned}
		{\rm Im}(\tilde{\alpha}) &= {\rm Im}(\alpha) - (1-c_K){\rm Im}\left(\frac{1}{z}\right)
		\\& = {\rm Im}(\alpha) +\frac{1-c_K}{|z|^2}{\rm Im}(z),
	\end{aligned}
\end{equation}
we have
\begin{equation}\label{Im zalpha}
	\begin{aligned}
		{\rm Im}(z\alpha) &= \frac{{\rm Im}(\alpha)}{K}|z|^2{\rm tr}(\mathbf{T}(z)\mathbf{\Lambda}\mathbf{T}(z)^H) 
		\\&\quad+ \frac{{\rm Im}(z)}{K}(1-c_K){\rm tr}(\mathbf{T}(z)\mathbf{\Lambda}\mathbf{T}(z)^H).
	\end{aligned}
\end{equation}

Hence from \eqref{Im alpha2} and \eqref{Im zalpha}, we obtain the following linear system
\begin{equation}\label{key}
	\begin{pmatrix}
		{\rm Im}(\alpha)\\{\rm Im}(z\alpha)
	\end{pmatrix} = \mathbf{G}_1\begin{pmatrix}
		{\rm Im}(\alpha)\\{\rm Im}(z\alpha)
	\end{pmatrix} + \begin{pmatrix}
		v_1\\(1-c_K)u_1
	\end{pmatrix}{\rm Im}(z)
\end{equation}
where 
\begin{equation}\label{key}
	\mathbf{G}_1	=\begin{pmatrix}
		0,&u_1\\|z|^2u_1,&0
	\end{pmatrix}
\end{equation}
with 
\begin{align}\label{key}
	&u_1=\frac{1}{K}{\rm tr}(\mathbf{T}(z)\mathbf{\Lambda}\mathbf{T}(z)^H)
	\\& v_1 = \frac{1}{K}{\rm tr}(\mathbf{T}(z)\mathbf{T}(z)^H).
\end{align}

We find that ${\rm det}(\mathbf{I}-\mathbf{G}_1) = 1-|z|^2u_1^2$ coincides with
\begin{equation}\label{key}
	\begin{aligned}
		{\rm det}(\mathbf{I}-\mathbf{G}_1) &= \left((1-c_K)u_1^2+v_1\right)\frac{{\rm Im}z}{{\rm Im}\alpha}
		\\&\geqslant	v_1\frac{{\rm Im}z}{{\rm Im}\alpha}.
	\end{aligned}
\end{equation}
Since $\alpha = c_K{b}_K(z)$ with $b_K(z)$ being a Stieltjes transform of a density, we have ${\rm Im}\alpha\leqslant \frac{c_K}{{\rm Im}z}$. Then $\frac{{\rm Im}z}{{\rm Im}\alpha}\geqslant\frac{({\rm Im}z)^2}{c_K}$, which implies that
\begin{equation}\label{key}
	{\rm det}(\mathbf{I}-\mathbf{G}_1)\geqslant v_1\frac{({\rm Im}z)^2}{c_K}.
\end{equation}

In order to obtain the lower bound of $v_1$, we remark that $\frac{1}{N}{\rm tr}(\mathbf{T}(z)\mathbf{T}(z)^H)\geqslant|{b}_K(z)|^2$ by the Jensen inequality. Hence $v_1\geqslant c_K|{b}_K(z)|^2\geqslant c_K|{\rm Im}(b_K(z))|^2$. Further, ${\rm Im}(b_K(z))$ can be written as
\begin{equation}\label{key}
	{\rm Im}(b_K(z)) = {\rm Im}(z)\int_{\mathbb{R}}\frac{dF_K(\lambda)}{|\lambda-z|^2}.
\end{equation}

There exists a constant $\eta>0$ for which
\begin{equation}\label{107}
	F_K([0,\eta])>1/2
\end{equation}
for each integer $K$. In addition, it holds that
\begin{equation}\label{key}
	\int_{\mathbb{R}}\frac{dF_K(\lambda)}{|\lambda-z|^2} > \int_{0}^\eta\frac{dF_K(\lambda)}{|\lambda-z|^2}>\frac{F_K([0,\eta])}{2(\eta^2+|z|^2)}>\frac{1}{4(\eta^2+|z|^2)}
\end{equation}
Hence $v_1>\frac{c_K({\rm Im}z)^2}{16(\eta^2+|z|^2)^2}$. Finally we get ${\rm det}(\mathbf{I}-\mathbf{G}_1)>\frac{({\rm Im}z)^4}{16(\eta^2+|z|^2)^2}$, which is equivalent to
\begin{equation}\label{bound of u1}
	|u_1|<\frac{\left(1-\frac{({\rm Im}z)^4}{16(\eta^2+|z|^2)^2}\right)^{1/2}}{{|z|}}.
\end{equation}

Similarly, by noticing that $\beta$ can be expressed as
\begin{equation}\label{key}
	\beta = \frac{1}{K}{\rm tr}({\rm Im}\mathbf{V}(z)) +\epsilon(z),
\end{equation}
we have
\begin{equation}\label{Im beta}
	{\rm Im}(\beta) = \frac{1}{K}{\rm tr}\left({\rm Im}\mathbf{V}(z)\right) + {\rm Im}(\epsilon(z))
\end{equation}
and
\begin{equation}\label{key}
	{\rm Im}(z\beta) = \frac{1}{K}{\rm tr}\left({\rm Im}(z\mathbf{V}(z))\right) + {\rm Im}(z\epsilon(z)).
\end{equation}

Using the procedures similar to \eqref{Im T} and \eqref{Im zT} yields
\begin{equation}\label{key}
	\begin{aligned}
		{\rm Im}\mathbf{V}(z) 
		= \mathbf{V}(z)\mathbf{\Lambda}\mathbf{V}(z)^H{\rm Im}(z\tilde{\beta})+\mathbf{V}(z)\mathbf{V}(z)^H{\rm Im}(z)
	\end{aligned}
\end{equation}
and
\begin{equation}\label{key}
	\begin{aligned}
		{\rm Im}\left(z\mathbf{V}(z)\right) = |z|^2\mathbf{V}(z)\mathbf{\Lambda}\mathbf{V}(z)^H{\rm Im}(\tilde{\beta}).
	\end{aligned}
\end{equation}

Meanwhile, remark that ${\rm Im}(z\tilde{\beta}) ={\rm Im}(z\beta)$ and 
\begin{equation}\label{Im beta tilde}
	{\rm Im}(\tilde{\beta}) = {\rm Im}(\beta)+\frac{1-c_K}{|z|^2}{\rm Im}(z).
\end{equation}

From \eqref{Im beta}-\eqref{Im beta tilde}, we obtain the following linear system
\begin{align}\label{key}
	\begin{aligned}
		\begin{pmatrix}
			{\rm Im}(\beta)\\{\rm Im}(z\beta)
		\end{pmatrix}= \mathbf{G}_2\begin{pmatrix}
			{\rm Im}(\beta)\\{\rm Im}(z\beta)
		\end{pmatrix} &+ \begin{pmatrix}
			v_2\\(1-c_K)u_2
		\end{pmatrix}{\rm Im}(z) 
		\\&+ \begin{pmatrix}
			{\rm Im}\epsilon(z)\\{\rm Im}(z\epsilon(z))
		\end{pmatrix}
	\end{aligned}
\end{align}
where
\begin{equation}\label{key}
	\mathbf{G}_2 = \begin{pmatrix}
		&	0,&u_2\\&|z|^2u_2,&0
	\end{pmatrix}
\end{equation}
with
\begin{align}\label{key}
	&u_2 = \frac{1}{K}{\rm tr}(\mathbf{V}(z)\mathbf{\Lambda}\mathbf{V}(z)^H),\\
	&v_2 = \frac{1}{K}{\rm tr}(\mathbf{V}(z)\mathbf{V}(z)^H) .
\end{align}

We can calculate
\begin{equation}\label{key}
	{\rm det}(\mathbf{I}-\mathbf{G}_2) = 1-|z|^2u_2^2
\end{equation}
which coincides with
\begin{equation}\label{key}
	\begin{aligned}
		{\rm det}(\mathbf{I}-\mathbf{G}_2)  &= \left((1-c_K)u_2^2+v_2\right)\frac{{\rm Im}z}{{\rm Im}\beta} + \frac{{\rm Im}\epsilon(z)}{{\rm Im}\beta}+\frac{{\rm Im}(z\epsilon(z))}{{\rm Im}\beta}
		\\& >v_2\frac{{\rm Im}z}{{\rm Im}\beta} + \frac{{\rm Im}\epsilon(z)}{{\rm Im}\beta}+\frac{{\rm Im}(z\epsilon(z))}{{\rm Im}\beta}
	\end{aligned}
\end{equation}

Since ${\rm Im}\beta\leqslant \frac{c_K}{{\rm Im}z}$, we have
\begin{equation}\label{det I-D2}
	\begin{aligned}
		{\rm det}(\mathbf{I}-\mathbf{G}_2)   &>v_2\frac{({\rm Im}z)^2}{c_K} + \frac{{\rm Im}\epsilon(z)}{{\rm Im}\beta}+\frac{{\rm Im}(z\epsilon(z))}{{\rm Im}\beta}
		\\& >v_2\frac{({\rm Im}z)^2}{c_K}  -\frac{|\epsilon(z)| }{{\rm Im}\beta}-\frac{|z||\epsilon(z)|}{{\rm Im}\beta} 
	\end{aligned}
\end{equation}

Note that $v_2 = c_K\frac{1}{N}{\rm tr}(\mathbf{V}(z)\mathbf{V}(z)^H)\geqslant c_K|\frac{1}{N}{\rm tr}\mathbf{V}(z)|^2$. Since $\frac{1}{N}{\rm tr}\mathbf{V}(z)$ can be regarded as the Stieltjes transform of a probability measure $\hat{F}_K$. Using the same arguments as before, the sequence $(\hat{F}_K-F_K)\to 0$ weakly as $K\to\infty$. Thus given $\eta$ defined in \eqref{107}, there exists an integer $K_1$ such that 
\begin{equation}\label{key}
	{\hat{F}}_K([0,\eta])>\frac{1}{4}
\end{equation}
for  each $K>K_1$. Hence we have
\begin{equation}\label{lower bound of v2}
	\begin{aligned}
		v_2> c_K\left|{\rm Im}z\int_{\mathbb{R}}\frac{d\hat{F}_K(\lambda)}{|\lambda-z|^2} \right|^2&>c_K\left|{\rm Im}z\int_{0}^\eta\frac{d\hat{F}_K(\lambda)}{|\lambda-z|^2}\right|^2
		\\&>c_K\left|\frac{{\rm Im}z}{2(\eta^2+|z|^2)}\hat{F}_K([0,\eta])\right|^2
		\\& = \frac{c_K({\rm Im}z)^2}{64(\eta^2+|z|^2)^2}.
	\end{aligned}
\end{equation}

Using \eqref{lower bound of v2} in \eqref{det I-D2} gives
\begin{equation}\label{125}
	{\rm det}(\mathbf{I}-\mathbf{G}_2)>\frac{({\rm Im}z)^4}{64(\eta^2+|z|^2)^2}-\frac{|\epsilon(z)|}{{\rm Im}\beta}  - \frac{|z||\epsilon(z)|}{{\rm Im}\beta} 
\end{equation}
for sufficiently large $K>K_1$.

Since $\beta = c_K\frac{1}{N}{\rm tr}E\mathbf{Q}(z)$, and $\frac{1}{N}{\rm tr}E\mathbf{Q}(z)$ can be regarded as the Stieltjes transform of a probability measure $\bar{F}_K$. It is shown in \cite{Bai2004CLT} that $|\frac{1}{N}{\rm tr}E\mathbf{Q}(z) - {b}_K(z)|\to 0$ for all $z\in \mathbb{C}\setminus\mathbb{R}^+$. Therefore, the measure $\bar{F}_K$ converges weakly to $F_K$ for large $K$. This implies there exists an integer $K_2\geqslant K_1$ such that for each large $K>K_2$
\begin{equation}\label{key}
	\bar{F}_K([0,\eta])>\frac{1}{4}
\end{equation}
with $\eta$ defined in \eqref{107}. Thus we have
\begin{equation}\label{key}
	{\rm Im}\beta > c_K{\rm Im}z\int_{\mathbb{R}}\frac{d\bar{F}_K(\lambda)}{|\lambda-z|^2}>\frac{c_K{\rm Im}z}{8(\eta^2+|z|^2)}.
\end{equation}

Using this inequality in \eqref{125} gives
\begin{equation}\label{128}
	\begin{aligned}
		{\rm det}(\mathbf{I}-\mathbf{G}_2)&>\frac{({\rm Im}z)^4}{64(\eta^2+|z|^2)^2}-\frac{8(\eta^2+|z|^2)}{c_K{\rm Im}z} |\epsilon(z)| \\&\quad- \frac{8(\eta^2+|z|^2)}{c_K{\rm Im}z}|z||\epsilon(z)|
	\end{aligned}
\end{equation}

In order to obtain the lower bound of \eqref{128}, we need to use the following Lemma.
\setcounter{lemma}{3} 
\begin{mylemma}\label{convergence rate of epsilon}
	For $z\in\mathbb{C}\setminus\mathbb{R}$, it holds true that
	\begin{equation}\label{key}
		|\epsilon(z)| \leqslant \frac{1}{K^2}P_2(|z|)P_6(|{\rm Im}z|^{-1})
	\end{equation}
\end{mylemma}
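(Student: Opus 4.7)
The strategy is to adapt the Gaussian integration-by-parts plus Poincar\'e--Nash inequality machinery of Haagerup--Thorbj{\o}rnsen and Vallet--Loubaton--Mestre to establish an approximate matrix equation for $E\mathbf{Q}(z)$ that coincides with the defining equation of $\mathbf{V}(z)$ up to a remainder whose normalized trace is $O(K^{-2})$.

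First I would start from the resolvent identity $\mathbf{Q}(z)\bm{\mathcal{Y}}\bm{\mathcal{Y}}^H = \mathbf{I}_N + z\mathbf{Q}(z)$ and expand the left-hand side entry-wise, giving $E[\mathbf{Q}(z)\bm{\mathcal{Y}}\bm{\mathcal{Y}}^H]_{ab} = \sum_{i,j} E[\mathbf{Q}_{ai}(z)\bm{\mathcal{Y}}_{ij}\bm{\mathcal{Y}}^*_{bj}]$. Because $\bm{\mathcal{Y}} = \bm{\Lambda}^{1/2}\bm{\mathcal{X}}$ with i.i.d.\ complex Gaussian $\bm{\mathcal{X}}_{ij}$, applying the integration-by-parts formula $E[\bm{\mathcal{Y}}^*_{bj}\Phi] = \frac{\lambda_b}{K}E[\partial_{\bm{\mathcal{Y}}_{bj}}\Phi]$ together with $\partial_{\bm{\mathcal{Y}}_{bj}}\mathbf{Q}_{ai} = -\mathbf{Q}_{ab}[\bm{\mathcal{Y}}^H\mathbf{Q}]_{ji}$ produces, after summation in $j$ and use of $\tilde{\beta}(z) = \frac{1}{K}E{\rm tr}\tilde{\mathbf{Q}}(z)$, the identity $E\mathbf{Q}(z)\bigl(\mathbf{I}_N + \tilde{\beta}(z)\bm{\Lambda}\bigr) = -\frac{1}{z}\mathbf{I}_N + \bm{\mathcal{E}}(z)$, with $\bm{\mathcal{E}}(z)$ a matrix whose entries are covariances of the form $\frac{1}{K}{\rm Cov}\bigl(\frac{1}{K}{\rm tr}\tilde{\mathbf{Q}}(z),\, [\bm{\Lambda}\mathbf{Q}(z)]_{ab}\bigr)$. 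Multiplying on the right by $\mathbf{V}(z)$ gives $E\mathbf{Q}(z)-\mathbf{V}(z) = -z\bm{\mathcal{E}}(z)\mathbf{V}(z)$, and taking the normalized trace expresses $\epsilon(z)$ as a single explicit sum of such covariance terms.

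Second, I would bound each covariance appearing in $\epsilon(z)$ by the Cauchy--Schwarz inequality, so that the estimate splits into a product of two standard deviations, and then invoke the Poincar\'e--Nash inequality (Lemma 7 in \cite{Vallet2012Improved}) applied to the smooth functionals $\frac{1}{K}{\rm tr}(\mathbf{Q}(z)\mathbf{A})$ of the Gaussian matrix $\bm{\mathcal{X}}$. This yields ${\rm Var}\bigl(\frac{1}{K}{\rm tr}(\mathbf{Q}(z)\mathbf{A})\bigr)\leqslant \frac{1}{K^2}\|\mathbf{A}\|^2 P_{m_1}(|z|)P_{n_1}(|{\rm Im}z|^{-1})$, so multiplying two such factors produces exactly the desired $K^{-2}$ order. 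Combining this with the a priori bounds $\|\mathbf{Q}(z)\|, \|\mathbf{V}(z)\|\leqslant |{\rm Im}z|^{-1}$, $\|\bm{\Lambda}\|\leqslant \lambda_{\max}$, together with $|\tilde{\beta}(z)|\leqslant \frac{c_K}{|{\rm Im}z|}+\frac{|1-c_K|}{|z|}$ and the factor $|z|$ in front of $\bm{\mathcal{E}}(z)\mathbf{V}(z)$, then supplies the polynomial envelopes $P_2(|z|)$ and $P_6(|{\rm Im}z|^{-1})$.

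The main obstacle will be the careful bookkeeping of the exponents so that the final bound is exactly of the degrees $2$ and $6$ claimed in the statement. Each application of Poincar\'e--Nash to a term involving $\mathbf{Q}(z)$ contributes powers of $|{\rm Im}z|^{-1}$, as does the trailing factor $\mathbf{V}(z)$ in $-z\bm{\mathcal{E}}(z)\mathbf{V}(z)$, and one must resist overestimating any single factor. A related subtlety is that $\mathbf{V}(z)$ is defined in terms of $E\frac{1}{K}{\rm tr}\mathbf{Q}(z)$ itself rather than in terms of the limiting $\tilde{\alpha}(z)$, so the remainder $\bm{\mathcal{E}}(z)$ never involves $\mathbf{T}(z)$ or $\tilde{\alpha}(z)$; this is what makes a single centering step sufficient and prevents an additional $K^{-1}$ fixed-point correction from appearing, which would otherwise degrade the rate to $K^{-1}$ as in \cite{Bai2004CLT}.
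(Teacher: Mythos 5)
Your proposal is correct, but it takes a genuinely different route from the paper's Appendix B. The paper imports the Dumont--Hachem--Loubaton machinery: it introduces the resummed deterministic equivalents $\mathbf{S}(z),\tilde{\mathbf{S}}(z)$ and auxiliary scalars $\tau,\tilde{\tau}$, invokes the matrix identity $E\mathbf{Q}=\mathbf{S}-z(\tilde{\delta}-\tilde{\tau})E\mathbf{Q}\mathbf{\Lambda}\mathbf{S}+\mathbf{\Delta}\mathbf{S}$, solves a $2\times 2$ linear system in $(\delta-\tau,\tilde{\delta}-\tilde{\tau})$ whose invertibility ($\det(\mathbf{I}-\mathbf{G}_3)$ bounded away from zero) is cited from that reference, bounds the source terms $v_3,\tilde{v}_3$ by Cauchy--Schwarz plus the Poincar\'e--Nash variance bounds of Lemma \ref{Lemma inequality}, and only then takes the normalized trace, which additionally requires a (stated but omitted) bound on $\frac{1}{K}{\rm tr}(\mathbf{\Delta}(z)\mathbf{S}(z))$. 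Your single integration-by-parts step, with centering at $\tilde{\beta}(z)=\frac{1}{K}E{\rm tr}\tilde{\mathbf{Q}}(z)$, instead yields the exact identity $E\mathbf{Q}(z)-\mathbf{V}(z)=zE\left[\overset{\circ}{\tilde{\varsigma}}(z)\mathbf{Q}(z)\right]\mathbf{\Lambda}\mathbf{V}(z)$, hence $\epsilon(z)=z\,{\rm Cov}\left(\frac{1}{K}{\rm tr}\tilde{\mathbf{Q}}(z),\frac{1}{K}{\rm tr}(\mathbf{Q}(z)\mathbf{\Lambda}\mathbf{V}(z))\right)$, and one Cauchy--Schwarz together with \eqref{Var0} and \eqref{Var1} gives the $O(K^{-2})$ rate in one stroke, with no coupled system, no $\tau,\tilde{\tau}$, and no appeal to $\det(\mathbf{I}-\mathbf{G}_3)$; this exploits precisely the point you flag, namely that $\mathbf{V}$ is built from $E\frac{1}{K}{\rm tr}\tilde{\mathbf{Q}}$ itself rather than from the fixed-point solution. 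Three small repairs: the entries of your remainder carry no extra $1/K$ (they are $-{\rm Cov}(\frac{1}{K}{\rm tr}\tilde{\mathbf{Q}}(z),(\mathbf{Q}(z)\mathbf{\Lambda})_{ab})$, not $\frac{1}{K}$ times that---harmless, since your estimate proceeds at the level of normalized traces); the bound $\Vert\mathbf{V}(z)\Vert\leqslant|{\rm Im}z|^{-1}$ needs the remark that $\tilde{\beta}$ is the Stieltjes transform of a positive measure, so that ${\rm Im}(z(1+\lambda_i\tilde{\beta}(z)))\geqslant{\rm Im}(z)$ (the paper uses the same fact for $\mathbf{S}$ without proof); and your closing comment about avoiding a $K^{-1}$ degradation should be read as applying only to this lemma---the comparison with the fixed-point quantity $\tilde{\alpha}$ is still carried out afterwards in Appendix A and does not spoil the rate there either. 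Finally, note what the paper's heavier route buys: the intermediate bounds on $\delta-\tau$ and $\tilde{\delta}-\tilde{\tau}$ are reused in the proof of Lemma \ref{lemma bound of epsilon'(z)} for $\epsilon'(z)$, whereas in your approach one would obtain that by differentiating your identity directly.
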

\begin{proof}
	See Appendix B in Supplementary Material (pages 4-6).
\end{proof}

Using this lemma, \eqref{128} can be arranged as
\begin{equation}\label{key}
	{\rm det}(\mathbf{I}-\mathbf{G}_2)>\frac{({\rm Im}z)^4}{64(\eta^2+|z|^2)^2}\left(1-\frac{1}{K^2}P_{9}(|z|)P_{11}(|{\rm Im}z|^{-1})\right).
\end{equation}
If we denote the subset $\varpi$ by
\begin{equation}\label{set varpi}
	\varpi = \left\{z\in\mathbb{C}^+:1-\frac{1}{K^2}P_{9}(|z|)P_{11}(|{\rm Im}z|^{-1})>0\right\},
\end{equation}
then for $z\in\varpi$ we have ${\rm det}(\mathbf{I}-\mathbf{G}_2) = 1-|z|^2u_2^2>0$ for all large $K>K_2$, which is equivalent to
\begin{equation}\label{bound of u2}
	|u_2|<\frac{1}{|z|}.
\end{equation}

Hence for $z\in \varpi$, from \eqref{bound of u1} and \eqref{bound of u2}, we have 
\begin{equation}\label{Lower bound of 1-z/Ktr}
	\begin{aligned}
		|1-\frac{z}{K}{\rm tr}(\mathbf{V}(z)\mathbf{\Lambda}\mathbf{T}(z))|&\geqslant|1-z\sqrt{u_1}\sqrt{u_2}|
		\\&\geqslant1-|z||\sqrt{u_1}||\sqrt{u_2}|
		\\&\geqslant 1-\left(1-\frac{({\rm Im}z)^4}{16(\eta^2+|z|^2)^2}\right)^{1/4}
		\\&\geqslant \frac{({\rm Im}z)^4}{64(\eta^2+|z|^2)^2}
	\end{aligned}
\end{equation}

Using \eqref{Lower bound of 1-z/Ktr} in \eqref{beta-alpha 2} and applying Lemma \ref{convergence rate of epsilon}, we obtain
\begin{equation}\label{sup beta-alpha 1}
	\begin{aligned}
		|\tilde{\beta}-\tilde{\alpha}|\leqslant\frac{64(\eta^2+|z|^2)^2}{({\rm Im}z)^4}|\epsilon|
		&	\leqslant \frac{1}{K^2}P_6(|z|)P_{10}(|{\rm Im}z|^{-1})
		\\&\leqslant \frac{1}{K^2}P_{9}(|z|)P_{12}(|{\rm Im}z|^{-1})
	\end{aligned}
\end{equation}
for $z\in \varpi$.

As for $z\in\varpi^c = \{z\in\mathbb{C}^+:1-\frac{1}{K^2}P_{9}(|z|)P_{11}(|{\rm Im}z|^{-1})\leqslant0\}$, we have 
\begin{equation}\label{sup beta-alpha 2}
	\begin{aligned}
		|\tilde{\beta}-\tilde{\alpha}|\leqslant\frac{2c_K}{|{\rm Im}z|}
		&\leqslant \frac{2c_K}{|{\rm Im}z|}\frac{1}{K^2}P_{9}(|z|)P_{11}(|{\rm Im}z|^{-1})
		\\&\leqslant \frac{1}{K^2}P_{9}(|z|)P_{12}(|{\rm Im}z|^{-1})
	\end{aligned}
\end{equation}

Combining \eqref{sup beta-alpha 1} and \eqref{sup beta-alpha 2}, we obtain \eqref{sup beta-alpha}. Now we completes the proof of \eqref{77}.

Next we establish \eqref{78}. Notice that $	Eb_{\breve{\mathbf{R}}}'(z) - b'_K(z) = \tilde{\beta}'-\tilde{\alpha}'$. Hence in order to prove \eqref{78}, it suffices to prove 
\begin{equation}\label{key}
	|\tilde{\beta}'-\tilde{\alpha}'|\leqslant \frac{1}{K^2}P_{15}(|z|)P_{20}(|{\rm Im}z|^{-1}).
\end{equation}

Taking the derivative with respect to $z$ on both sides of \eqref{103} gives
\begin{equation}\label{tilde beta - alpha}
	\begin{aligned}
		&	\tilde{\beta}'-\tilde{\alpha}'= \\&\qquad (\tilde{\beta}'-\tilde{\alpha}')\frac{z}{K}{\rm tr}\left(\mathbf{V}(z)\mathbf{\Lambda}\mathbf{T}(z)\right) 
		\\&\quad+(\tilde{\beta}-\tilde{\alpha})\frac{1}{K}{\rm tr}\left(\mathbf{V}(z)\mathbf{\Lambda}\mathbf{T}(z)+z\mathbf{V}'(z)\mathbf{\Lambda}\mathbf{T}(z)+z\mathbf{V}(z)\mathbf{\Lambda}\mathbf{T}'(z)\right)
		\\&\quad+\epsilon'(z)
	\end{aligned}
\end{equation}
where
\begin{equation}\label{key}
	\epsilon'(z) = 	\frac{1}{K}{\rm tr}\left(E\mathbf{Q}'(z) - \mathbf{V}'(z)\right).
\end{equation}
Taking derivatives with respect to $z$ of \eqref{T(z)} and \eqref{V(z)}, we have
\begin{equation}\label{T'(z)}
	\mathbf{T}'(z) = z\tilde{\alpha}'\mathbf{T}(z)\mathbf{\Lambda}\mathbf{T}(z) - \frac{1}{z}\mathbf{T}(z)
\end{equation}
and
\begin{equation}\label{V'(z)}
	\mathbf{V}'(z) = z\tilde{\beta}'\mathbf{V}(z)\mathbf{\Lambda}\mathbf{V}(z) - \frac{1}{z}\mathbf{V}(z)
\end{equation}

Substituting \eqref{T'(z)} and \eqref{V'(z)} into \eqref{tilde beta - alpha}, we have
\begin{equation}\label{key}
	\begin{aligned}
		\tilde{\beta}'-\tilde{\alpha}' &= (\tilde{\beta}'-\tilde{\alpha}')\frac{z}{K}{\rm tr}\left(\mathbf{V}(z)\mathbf{\Lambda}\mathbf{T}(z)\right) 
		\\&\quad+(\tilde{\beta}-\tilde{\alpha})\frac{1}{K}{\rm tr}\left[z^2\tilde{\alpha}'\mathbf{T}(z)\mathbf{\Lambda}\mathbf{V}(z)\mathbf{\Lambda}\mathbf{T}(z)\right.
		\\&\left.\qquad\qquad\qquad+z^2\tilde{\beta}'\mathbf{V}(z)\mathbf{\Lambda}\mathbf{T}(z)\mathbf{\Lambda}\mathbf{V}(z) - \mathbf{V}(z)\mathbf{\Lambda}\mathbf{T}(z)\right]
		\\&\quad+\epsilon'(z).
	\end{aligned}
\end{equation}

Hence we have
\begin{equation}\label{definiyion of beta'-alpha'}
	\tilde{\beta}'-\tilde{\alpha}' = \frac{u_4(\tilde{\beta}-\tilde{\alpha})+\epsilon'(z)}{1-\frac{z}{K}{\rm tr}(\mathbf{V}(z)\mathbf{\Lambda}\mathbf{T}(z))}
\end{equation}
where
\begin{equation}\label{key}
	\small	\begin{aligned}
		u_4 &= \frac{z^2}{K}\tilde{\alpha}'{\rm tr}\left[\mathbf{T}(z)\mathbf{\Lambda}\mathbf{V}(z)\mathbf{\Lambda}\mathbf{T}(z)\right]+\frac{z^2}{K}\tilde{\beta}'{\rm tr}\left[\mathbf{V}(z)\mathbf{\Lambda}\mathbf{T}(z)\mathbf{\Lambda}\mathbf{V}(z) \right]
		\\&\quad- \frac{1}{K}{\rm tr}\left[\mathbf{V}(z)\mathbf{\Lambda}\mathbf{T}(z)\right].
	\end{aligned}
\end{equation}

Using the facts that $|\tilde{\alpha}'|\leqslant\frac{c_K}{|{\rm Im}z|^2}$, $|\tilde{\beta}'|\leqslant\frac{c_K}{|{\rm Im}z|^2}$, 
\begin{equation}\label{key}
	\begin{aligned}
		\left|	\frac{1}{K}{\rm tr}\left(\mathbf{T}(z)\mathbf{\Lambda}\mathbf{V}(z)\mathbf{\Lambda}\mathbf{T}(z)\right)\right|
		&\leqslant\frac{1}{K}{\rm tr}\left(\mathbf{T}(z)\mathbf{\Lambda}\mathbf{V}(z)\mathbf{T}(z)^H\mathbf{\Lambda}^H\right)
		\\& \leqslant c_K{\Vert\mathbf{V}(z)\Vert}\Vert\mathbf{\Lambda}\Vert^2\Vert\mathbf{T}(z)\Vert
		\\&\leqslant c_K\frac{\lambda_{\rm max}^2}{|{\rm Im}z|^2},
	\end{aligned}
\end{equation}
\begin{equation}\label{key}
	\begin{aligned}
		\left|\frac{1}{K}{\rm tr}\left(\mathbf{V}(z)\mathbf{\Lambda}\mathbf{T}(z)\mathbf{\Lambda}\mathbf{V}(z)\right)\right|
		&\leqslant\frac{1}{K}{\rm tr}\left(\mathbf{V}(z)\mathbf{\Lambda}\mathbf{T}(z)\mathbf{V}(z)^H\mathbf{\Lambda}^H\right)
		\\& \leqslant c_K{\Vert\mathbf{T}(z)\Vert}\Vert\mathbf{\Lambda}\Vert^2\Vert\mathbf{V}(z)\Vert
		\\&\leqslant c_K\frac{\lambda_{\rm max}^2}{|{\rm Im}z|^2},
	\end{aligned}
\end{equation}
and
\begin{equation}\label{key}
	\left|\frac{1}{K}{\rm tr}\left(\mathbf{V}(z)\mathbf{\Lambda}\mathbf{T}(z)\right) \right|\leqslant c_K\frac{\lambda_{\rm max}}{|{\rm Im}z|},
\end{equation}
we have
\begin{equation}\label{217}
	\begin{aligned}
		|u_4|&\leqslant 2c_K^2|z|^2\frac{\lambda_{\rm max}^2}{|{\rm Im}z|^4} + c_K\frac{\lambda_{\rm max}}{|{\rm Im}z|}
		\\& \leqslant (2c_K^2\lambda_{\rm max}^2|z|^2+c_K\lambda_{\rm max})\left(|{\rm Im}z|^{-4}+|{\rm Im}z|^{-1}\right)
		\\&\leqslant P_2(|z|)P_4(|{\rm Im}z|^{-1})
	\end{aligned}
\end{equation}

In addition,  we need the following lemma to control the bound of $\epsilon'(z)$.
\begin{mylemma}\label{lemma bound of epsilon'(z)}
	For $z\in\mathbb{C}\setminus\mathbb{R}$, it holds true that
	\begin{equation}\label{bound of epsilon'(z)}
		|\epsilon'(z)| \leqslant \frac{1}{K^2}P_3(|z|)P_{11}(|{\rm Im}z|^{-1}).
	\end{equation}
\end{mylemma}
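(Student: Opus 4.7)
The plan is to deduce Lemma \ref{lemma bound of epsilon'(z)} from Lemma \ref{convergence rate of epsilon} by a Cauchy integral argument, rather than redoing the integration-by-parts expansion that produces $\epsilon(z)$ in Appendix B. The map $z \mapsto \epsilon(z) = \frac{1}{K}\,{\rm tr}(E\mathbf{Q}(z) - \mathbf{V}(z))$ is holomorphic on $\mathbb{C}\setminus\mathbb{R}$: $E\mathbf{Q}(z)$ is the expectation of a resolvent and hence analytic off the real line, while $\mathbf{V}(z)$ inherits analyticity from the fact that $\beta(z) = \frac{1}{K}E\,{\rm tr}\,\mathbf{Q}(z)$ is itself a Stieltjes transform. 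Since the derivative appearing in $\epsilon'(z) = \frac{1}{K}\,{\rm tr}(E\mathbf{Q}'(z) - \mathbf{V}'(z))$ is just the complex derivative of $\epsilon$ (the interchange of $\frac{d}{dz}$ and trace/expectation being justified by the analytic dependence), I would represent it by the Cauchy formula
\begin{equation*}
\epsilon'(z) \;=\; \frac{1}{2\pi\mathsf{i}}\oint_{|\zeta - z| = r} \frac{\epsilon(\zeta)}{(\zeta - z)^{2}}\,d\zeta.
\end{equation*}

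The natural choice is $r = |{\rm Im}(z)|/2$, which keeps the circle inside $\mathbb{C}\setminus\mathbb{R}$. On this circle the trivial estimates $|\zeta| \le |z| + |{\rm Im}(z)|/2 \le (3/2)|z|$ and $|{\rm Im}(\zeta)|^{-1} \le 2|{\rm Im}(z)|^{-1}$ hold, so Lemma \ref{convergence rate of epsilon} gives
\begin{equation*}
\sup_{|\zeta - z| = r}|\epsilon(\zeta)| \;\le\; \frac{1}{K^{2}}\,\widetilde{P}_{2}(|z|)\,\widetilde{P}_{6}(|{\rm Im}(z)|^{-1})
\end{equation*}
for some generic polynomials (constants absorbed). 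The standard Cauchy bound $|\epsilon'(z)| \le r^{-1}\sup|\epsilon(\zeta)|$ then contributes one more factor of $|{\rm Im}(z)|^{-1}$, yielding a bound of the form $\frac{1}{K^{2}}P_{2}(|z|)P_{7}(|{\rm Im}(z)|^{-1})$. This is strictly stronger than the claimed $\frac{1}{K^{2}}P_{3}(|z|)P_{11}(|{\rm Im}(z)|^{-1})$, so the lemma follows immediately (the Cauchy trick is somewhat wasteful in $|{\rm Im}(z)|^{-1}$, but the stated degrees leave ample slack).

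The only real obstacle is verifying analyticity of $\mathbf{V}(z)$ on $\mathbb{C}\setminus\mathbb{R}$, since $\mathbf{V}(z)$ is defined implicitly through $\beta(z)$. I would handle this by observing that $\beta(z)$ is a Stieltjes transform of the deterministic measure $E F_{\breve{\mathbf{R}}}$, hence extends holomorphically to $\mathbb{C}\setminus\mathbb{R}$; since $\tilde\beta(z) = \beta(z) - (1-c_K)/z$ is also holomorphic there, and since $1 + z\tilde\beta(z)\mathbf{\Lambda}$ is invertible for $z$ off the real line (its spectrum is bounded away from zero by the argument already used to bound $\|\mathbf{V}(z)\|$ in Appendix B), the matrix-valued map $z \mapsto \mathbf{V}(z)$ is holomorphic, and so is its normalized trace. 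Everything else in the argument is routine bookkeeping.
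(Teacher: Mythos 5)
Your argument is correct, and it is a genuinely different (and substantially shorter) route than the paper's. The paper proves the lemma by differentiating the deterministic-equivalent identity $\frac{1}{K}{\rm tr}(E\mathbf{Q}(z)-\mathbf{S}(z))=-z(\tilde\delta-\tilde\tau)\frac{1}{K}{\rm tr}(E\mathbf{Q}\mathbf{\Lambda}\mathbf{S})+\frac{1}{K}{\rm tr}(\mathbf{\Delta}\mathbf{S})$, which forces it to control $|\tilde\delta'-\tilde\tau'|$, $v_3'$ and $\tilde v_3'$ term by term via a second round of Poincar\'e--Nash variance estimates for derivatives of resolvents (the supplementary Lemma on ${\rm Var}(\frac{1}{K}{\rm tr}(\mathbf{C}_1\mathbf{Q}'\mathbf{C}_2))$ etc.), before assembling the bound $\frac{1}{K^2}P_3(|z|)P_{11}(|{\rm Im}z|^{-1})$. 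You bypass all of that: since $\epsilon$ is holomorphic on $\mathbb{C}\setminus\mathbb{R}$ and Lemma 4 gives a bound by a fixed polynomial pair uniformly in $z$ (and in both half-planes, by $\epsilon(\bar z)=\overline{\epsilon(z)}$), the Cauchy estimate on the circle of radius $|{\rm Im}z|/2$ costs only one extra power of $|{\rm Im}z|^{-1}$ and constant rescalings of the polynomial coefficients, yielding $\frac{1}{K^2}P_2(|z|)P_7(|{\rm Im}z|^{-1})$, which implies the stated bound (the circle stays in the same half-plane, so no boundary issue arises, and the conclusion inherits only the same ``$K$ large'' restriction already implicit in Lemma 4). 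The two points you flag are indeed the only ones needing care, and both check out: $\tilde\beta(z)=\frac{1}{K}E\,{\rm tr}\tilde{\mathbf{Q}}(z)$ is the Stieltjes transform of a probability measure on $[0,\infty)$, so ${\rm Im}\bigl(z(1+\lambda_i\tilde\beta(z))\bigr)\geqslant{\rm Im}(z)$ for ${\rm Im}(z)>0$, which gives both invertibility of $\mathbf{I}_N+\tilde\beta(z)\mathbf{\Lambda}$ and $\Vert\mathbf{V}(z)\Vert\leqslant|{\rm Im}z|^{-1}$, hence holomorphy of $z\mapsto\frac{1}{K}{\rm tr}\mathbf{V}(z)$; and the identification of the paper's $\epsilon'(z)=\frac{1}{K}{\rm tr}(E\mathbf{Q}'(z)-\mathbf{V}'(z))$ with the complex derivative of $\epsilon$ is justified by the locally uniform bounds $\Vert\mathbf{Q}'(z)\Vert\leqslant|{\rm Im}z|^{-2}$ allowing the interchange of $d/dz$ with $E$ and ${\rm tr}$. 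What the paper's heavier computation buys is explicit control of the intermediate quantities ($|\tilde\delta'-\tilde\tau'|$, etc.) within the same Poincar\'e--Nash framework, but none of those intermediate bounds is used elsewhere, so for the lemma as stated your argument suffices and in fact gives a sharper polynomial dependence.
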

\begin{proof}
	See Appendix C in Supplementary Material (pages 6-8).
\end{proof}

For $z\in\varpi$ (the set $\varpi$ is defined in \eqref{set varpi}), by using \eqref{sup beta-alpha}, \eqref{Lower bound of 1-z/Ktr},  \eqref{217} and \eqref{bound of epsilon'(z)} in \eqref{definiyion of beta'-alpha'}, we obtain 
\begin{equation}\label{key}
	|\tilde{\beta}'-\tilde{\alpha}'|\leqslant \frac{1}{K^2}P_{15}(|z|)P_{20}(|{\rm Im}z|^{-1})
\end{equation}

For $z\in \varpi^c$,
\begin{equation}\label{key}
	\begin{aligned}
		|\tilde{\beta}'-\tilde{\alpha}'|\leqslant 2\frac{c_K}{|{\rm Im}z|^2}&\leqslant  \frac{1}{K^2}P_9(|z|)P_{13}(|{\rm Im}z|^{-1})\\&
		\leqslant  \frac{1}{K^2}P_{15}(|z|)P_{20}(|{\rm Im}z|^{-1})
	\end{aligned}
\end{equation}
which completes the proof of \eqref{78}.

\section*{Appendix B\\Proof of Lemma \ref{convergence rate of epsilon}}
We here follow the method developed by Dumont, Hachem and Loubaton \cite{Dumont2010On}. We note that $\bm{\mathcal{Y}}=\mathbf{\Lambda}^{1/2}\bm{\mathcal{X}}$ is a matrix model with a separable variance profile. Before proceeding, we introduce the following lemma, which is useful to control the upper bound of the variances.
\begin{lemma}\label{Lemma inequality}
	For  $z\in\mathbb{C}\setminus\mathbb{R}$, and deterministic matrices $\mathbf{C}_1$, $\mathbf{C}_2$, $\mathbf{D}_1\in\mathbb{C}^{N\times N}$ and $\mathbf{D}_2\in\mathbb{C}^{K\times K}$  with uniformly bounded spectral norms, it holds true that
	\begin{align}
		&{\rm Var}\left(\frac{1}{K}{\rm tr}\left(\mathbf{C}_1\mathbf{Q}(z)\mathbf{C}_2\right)\right)\leqslant \frac{1}{K^2}P_1(|z|)P_4(|{\rm Im}z|^{-1})\label{Var0},
		\\&	{\rm Var}\left(\frac{1}{K}{\rm tr}\tilde{\mathbf{Q}}(z)\right)\leqslant \frac{1}{K^2}P_1(|z|)P_4(|{\rm Im}z|^{-1})\label{Var1},
		\\& {\rm Var}\left(\frac{1}{K}{\rm tr}\left(\tilde{\mathbf{Q}}(z)\bm{\mathcal{Y}}^H\mathbf{D}_1\bm{\mathcal{Y}}\mathbf{D}_2\right)\right)\leqslant \frac{1}{K^2}P_3(|z|)P_4(|{\rm Im}z|^{-1})\label{Var2}.
	\end{align}
\end{lemma}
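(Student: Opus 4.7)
\textbf{Proof plan for Lemma \ref{Lemma inequality}.} The strategy is to bound each of the three variances via the Poincar\'e--Nash inequality, which for a smooth function $\Phi$ of the i.i.d.\ complex Gaussian entries of $\bm{\mathcal{X}}$ (with $\bm{\mathcal{Y}}=\mathbf{\Lambda}^{1/2}\bm{\mathcal{X}}$ having variance profile $\lambda_i/K$ on the $i$-th row) takes the form
\begin{equation}
{\rm Var}(\Phi)\;\leqslant\;\sum_{i,j}\frac{\lambda_i}{K}\,E\!\left[\Bigl|\tfrac{\partial \Phi}{\partial \bm{\mathcal{Y}}_{ij}}\Bigr|^{2}+\Bigl|\tfrac{\partial \Phi}{\partial \overline{\bm{\mathcal{Y}}_{ij}}}\Bigr|^{2}\right].\nonumber
\end{equation}
Since $\sup_N\|\mathbf{\Lambda}\|\leqslant \lambda_{\rm max}<\infty$ by (A1), the factor $\lambda_i$ is harmless throughout. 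The basic resolvent differentiation identities I will use repeatedly are
\begin{equation}
\tfrac{\partial \mathbf{Q}(z)}{\partial \bm{\mathcal{Y}}_{ij}} \;=\; -\mathbf{Q}(z)\mathbf{E}_{ij}\bm{\mathcal{Y}}^{H}\mathbf{Q}(z),\qquad \tfrac{\partial \tilde{\mathbf{Q}}(z)}{\partial \bm{\mathcal{Y}}_{ij}} \;=\; -\tilde{\mathbf{Q}}(z)\bm{\mathcal{Y}}^{H}\mathbf{E}_{ij}\tilde{\mathbf{Q}}(z),\nonumber
\end{equation}
where $\mathbf{E}_{ij}$ is the standard matrix unit, together with the standard norm bounds $\|\mathbf{Q}(z)\|,\|\tilde{\mathbf{Q}}(z)\|\leqslant|{\rm Im}\,z|^{-1}$ and $\|\bm{\mathcal{Y}}\tilde{\mathbf{Q}}(z)\|=\|\mathbf{Q}(z)\bm{\mathcal{Y}}\|\leqslant |z|^{1/2}|{\rm Im}\,z|^{-1/2}$ (the latter obtained by bounding $\|\bm{\mathcal{Y}}\tilde{\mathbf{Q}}(z)\bm{\mathcal{Y}}^{H}\|=\|\mathbf{I}+z\mathbf{Q}(z)\|$).

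For \eqref{Var0}, applying Poincar\'e--Nash with $\Phi=\frac{1}{K}{\rm tr}(\mathbf{C}_1\mathbf{Q}(z)\mathbf{C}_2)$ and substituting the derivative formula gives $\frac{\partial \Phi}{\partial \bm{\mathcal{Y}}_{ij}}=-\frac{1}{K}[\bm{\mathcal{Y}}^H\mathbf{Q}\mathbf{C}_1\mathbf{C}_2\mathbf{Q}]_{ji}$ (plus a conjugate counterpart), so the sum $\sum_{i,j}\lambda_i|\cdot|^{2}$ collapses to a single trace $\frac{1}{K^{3}}{\rm tr}(\mathbf{Q}\mathbf{C}_1\mathbf{C}_2\mathbf{Q}\mathbf{C}_2^{H}\mathbf{C}_1^{H}\mathbf{Q}^{H}\mathbf{\Lambda}^{1/2}\bm{\mathcal{Y}}\bm{\mathcal{Y}}^{H}\mathbf{\Lambda}^{1/2}\mathbf{Q}^{H})$ (up to a similar term). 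Controlling every resolvent factor by $|{\rm Im}\,z|^{-1}$, the $\mathbf{C}_k,\mathbf{\Lambda}$ factors by constants, and writing $\bm{\mathcal{Y}}\bm{\mathcal{Y}}^{H}=\mathbf{I}+z\mathbf{Q}(z)$ inside the remaining trace yields at worst a factor of order $|z|\,|{\rm Im}\,z|^{-4}$, which combined with the $1/K^{3}$ prefactor and the extra $K$ from the $\sum_{i}$ produces the stated $O(K^{-2})P_1(|z|)P_4(|{\rm Im}\,z|^{-1})$ bound. The proof of \eqref{Var1} is essentially the special case $\mathbf{C}_1=\mathbf{C}_2=\mathbf{I}_K$ after moving the trace from $\tilde{\mathbf{Q}}$ to $\mathbf{Q}$ via the identity $\frac{1}{K}{\rm tr}\tilde{\mathbf{Q}}(z)=\frac{1}{K}{\rm tr}\mathbf{Q}(z)-\frac{K-N}{K z}$, the constant correction having zero variance.

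The main obstacle is \eqref{Var2}, because the argument of the trace is cubic in $\bm{\mathcal{Y}}$, so differentiating $\Phi=\frac{1}{K}{\rm tr}(\tilde{\mathbf{Q}}\bm{\mathcal{Y}}^{H}\mathbf{D}_1\bm{\mathcal{Y}}\mathbf{D}_2)$ with respect to $\bm{\mathcal{Y}}_{ij}$ generates three terms: one from the explicit $\bm{\mathcal{Y}}^{H}$, one from the explicit $\bm{\mathcal{Y}}$, and one from $\tilde{\mathbf{Q}}$ itself, each of which must be bounded separately and then recombined via Minkowski's inequality. The plan is to isolate in each term either a factor of $\bm{\mathcal{Y}}\tilde{\mathbf{Q}}$ or of $\tilde{\mathbf{Q}}\bm{\mathcal{Y}}^{H}$ and apply the $\|\bm{\mathcal{Y}}\tilde{\mathbf{Q}}(z)\|\leqslant |z|^{1/2}|{\rm Im}\,z|^{-1/2}$ estimate, absorbing the surviving $\mathbf{D}_k$, $\mathbf{\Lambda}$ factors by their operator norms. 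After collapsing the $(i,j)$ sums into a single trace, one more application of $\bm{\mathcal{Y}}\bm{\mathcal{Y}}^{H}=\mathbf{I}+z\mathbf{Q}(z)$ (or its dual) will be needed to eliminate the last outstanding $\bm{\mathcal{Y}}\bm{\mathcal{Y}}^{H}$; careful bookkeeping shows that the cubic structure contributes at most one extra power of $|z|$ per differentiated $\bm{\mathcal{Y}}$, giving the claimed $P_3(|z|)P_4(|{\rm Im}\,z|^{-1})/K^{2}$ bound. Throughout, the $\lambda_i$ weights remain bounded by (A1), so the variance profile never enters quantitatively beyond the uniform factor $\lambda_{\rm max}$.
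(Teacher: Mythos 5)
Your plan is essentially the paper's own proof: both arguments rest on the Poincar\'e--Nash inequality for the Gaussian entries of $\bm{\mathcal{Y}}=\mathbf{\Lambda}^{1/2}\bm{\mathcal{X}}$, the resolvent differentiation identities, collapsing the $(i,j)$-sums into traces, and then eliminating the leftover $\bm{\mathcal{Y}}$ factors through $\mathbf{Q}\bm{\mathcal{Y}}\bm{\mathcal{Y}}^H=\mathbf{I}_N+z\mathbf{Q}$ and $\tilde{\mathbf{Q}}\bm{\mathcal{Y}}^H\bm{\mathcal{Y}}=\mathbf{I}_K+z\tilde{\mathbf{Q}}$ together with $\Vert\mathbf{Q}\Vert,\Vert\tilde{\mathbf{Q}}\Vert\leqslant|{\rm Im}z|^{-1}$. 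Your handling of \eqref{Var1} via ${\rm tr}\,\tilde{\mathbf{Q}}(z)={\rm tr}\,\mathbf{Q}(z)-(K-N)/z$, the deterministic correction having zero variance, is a legitimate shortcut the paper does not take (it computes \eqref{Var1} directly); note the reduction is to \eqref{Var0} with $\mathbf{C}_1=\mathbf{C}_2=\mathbf{I}_N$, not $\mathbf{I}_K$.

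Two slips should be corrected. First, the bound $\Vert\bm{\mathcal{Y}}\tilde{\mathbf{Q}}(z)\Vert\leqslant|z|^{1/2}|{\rm Im}z|^{-1/2}$, which you announce as a tool to be used repeatedly in \eqref{Var2}, is false as stated: writing $s_k$ for the singular values of $\bm{\mathcal{Y}}$, one has $\Vert\bm{\mathcal{Y}}\tilde{\mathbf{Q}}(z)\Vert=\max_k s_k/|s_k^2-z|$, and taking $s_k^2=|{\rm Im}z|$ with $z=\mathsf{i}\,|{\rm Im}z|$ gives the value $(2|{\rm Im}z|)^{-1/2}$, which exceeds your bound when $|{\rm Im}z|$ is small. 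What your own derivation through $\bm{\mathcal{Y}}\tilde{\mathbf{Q}}(z)\bm{\mathcal{Y}}^H=\mathbf{I}_N+z\mathbf{Q}(z)$ actually yields is $\Vert\bm{\mathcal{Y}}\tilde{\mathbf{Q}}(z)\Vert^2\leqslant|{\rm Im}z|^{-1}\left(1+|z|\,|{\rm Im}z|^{-1}\right)$; with this correction your argument goes through, at the possible cost of higher polynomial degrees than the stated $P_3(|z|)P_4(|{\rm Im}z|^{-1})$, which is harmless for how the lemma is used downstream (the paper avoids operator-norm bounds on $\bm{\mathcal{Y}}\tilde{\mathbf{Q}}$ altogether by keeping every $\bm{\mathcal{Y}}\bm{\mathcal{Y}}^H$, respectively $\bm{\mathcal{Y}}^H\bm{\mathcal{Y}}$, inside the trace and substituting the resolvent identities there, which is how it lands exactly on degrees $3$ and $4$). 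Second, in \eqref{Var2} the explicit $\bm{\mathcal{Y}}^H$ is anti-holomorphic in $\bm{\mathcal{Y}}$, so each Wirtinger derivative produces two terms (one from $\tilde{\mathbf{Q}}$ and one from the explicit $\bm{\mathcal{Y}}$ or $\bm{\mathcal{Y}}^H$), not three; this is only bookkeeping and does not affect the estimate.
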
 
\begin{proof}
	See Appendix D in Supplementary Material (pages 8-9).
\end{proof}

We define scalars
\begin{equation}\label{138}
	\begin{aligned}
		&\delta(z) =\frac{1}{K} E{\rm tr}\left(\mathbf{\Lambda}\mathbf{Q}(z)\right)
		\\&\tilde{\delta} (z)= \frac{1}{K}E{\rm tr}\tilde{\mathbf{Q}}(z)
	\end{aligned}
\end{equation}
and random matrices
\begin{equation}\label{key}
	\begin{aligned}
		&\mathbf{S}(z) = -\frac{1}{z}\left(\mathbf{I}_N+\tilde{\delta}(z)\mathbf{\Lambda}\right)^{-1}
		\\&\tilde{\mathbf{S}}(z) = -\frac{\mathbf{I}_K}{z(1+\delta(z))}
	\end{aligned}
\end{equation}

Furthermore we define two scalar parameters
\begin{equation}\label{139}
	\begin{aligned}
		&\tau(z) = -\frac{1}{zK}{\rm tr}\left(\mathbf{\Lambda}(\mathbf{I}_N+\tilde{\delta}(z)\mathbf{\Lambda})^{-1}\right)
		\\& \tilde{\tau}(z) = -\frac{1}{z(1+\delta(z))}
	\end{aligned}
\end{equation}
and two matrices
\begin{equation}\label{140}
	\begin{aligned}
		&\mathbf{\Delta}(z) = \frac{1}{1+\delta(z)}E\left(  \overset{\circ }{\varsigma}(z)\mathbf{Q}(z)\bm{\mathcal{Y}}\bm{\mathcal{Y}}^H\right)
		\\&\tilde{\mathbf{\Delta}}(z) = E\left(\overset{\circ}{\tilde{\varsigma}}(z)\tilde{\mathbf{Q}}(z)\bm{\mathcal{Y}}^H\mathbf{\Lambda}(\mathbf{I}_N+\tilde{\delta}(z)\mathbf{\Lambda})^{-1}\bm{\mathcal{Y}}\right)
	\end{aligned}
\end{equation}

where
\begin{equation}\label{key}
	\begin{aligned}
		&\overset{\circ }{\varsigma}(z)=	\frac{1}{K}{\rm tr}(\mathbf{\Lambda}\mathbf{Q}(z))-E\left(\frac{1}{K}{\rm tr}(\mathbf{\Lambda}\mathbf{Q}(z))\right)
		\\&  \overset{\circ }{\tilde{\varsigma}}(z)= \frac{1}{K}{\rm tr}\tilde{\mathbf{Q}}(z)-E\left(\frac{1}{K}{\rm tr}\tilde{\mathbf{Q}}(z)\right)
	\end{aligned}
\end{equation}
In what follows, we will drop the dependence on $z$ of $\delta(z)$, $\tilde{\delta}(z)$, $\tau(z)$, $\tilde{\tau}(z)$, $\overset{\circ }{\varsigma}(z)$, and $\overset{\circ }{\tilde{\varsigma}}(z)$ for the clarity of presentation.

It has been shown in \cite{Dumont2010On} that the following equality holds true
\begin{equation}\label{142}
	E\mathbf{Q}(z) = \mathbf{S}(z) -z(\tilde{\delta}-\tilde{\tau})E\mathbf{Q}(z)\mathbf{\Lambda}\mathbf{S}(z)+\mathbf{\Delta}(z)\mathbf{S}(z).
\end{equation}

Multiplying \eqref{142} from both sides by $\frac{1}{K}\mathbf{\Lambda}$ and taking the trace, we obtain
\begin{equation}\label{143}
	\begin{aligned}
		&\frac{1}{K}{\rm tr}\left(\mathbf{\Lambda}E\mathbf{Q}(z)\right) \\&=\frac{1}{K}{\rm tr}\left(\mathbf{\Lambda}\mathbf{S}(z)\right) - \frac{z}{K}(\tilde{\delta}-\tilde{\tau}){\rm tr}\left(\mathbf{\Lambda}E\mathbf{Q}(z)\mathbf{\Lambda}\mathbf{S}(z)\right) 
		\\&\quad\quad+ \frac{1}{K}{\rm tr}\left(\mathbf{\Lambda}\mathbf{\Delta}(z)\mathbf{S}(z)\right).
	\end{aligned}
\end{equation}

We remark that $\frac{1}{K}{\rm tr}\left(\mathbf{\Lambda}E\mathbf{Q}(z)\right)  =\frac{1}{K} E{\rm tr}\left(\mathbf{\Lambda}\mathbf{Q}(z)\right)=\delta$ (using \eqref{138}) and $ \frac{1}{K}{\rm tr}\left(\mathbf{\Lambda}\mathbf{S}(z)\right) =-\frac{1}{zK}{\rm tr}\left(\mathbf{\Lambda}\left(\mathbf{I}_N+\tilde{\delta}\mathbf{\Lambda}\right)^{-1}\right)=\tau$ (using \eqref{139} and \eqref{140}). Therefore, \eqref{143} is equivalent to
\begin{equation}\label{145}
	\begin{aligned}
		\delta = \tau -  \frac{z}{K}(\tilde{\delta}-\tilde{\tau}){\rm tr}\left(\mathbf{\Lambda}E\mathbf{Q}(z)\mathbf{\Lambda}\mathbf{S}(z)\right) 
		+ \frac{1}{K}{\rm tr}\left(\mathbf{\Lambda}\mathbf{\Delta}(z)\mathbf{S}(z)\right).
	\end{aligned}
\end{equation}

In addition, the following equality is proved in \cite{Dumont2010On} 
\begin{equation}\label{key}
	E\tilde{\mathbf{Q}}(z) = \tilde{\mathbf{S}}(z) - z(\delta-\tau)E\tilde{\mathbf{Q}}(z)\tilde{\mathbf{S}}(z) + \tilde{\mathbf{\Delta}}(z)\tilde{\mathbf{S}}(z).
\end{equation}

Multiplying $1/K$ and taking trace on both sides, and using \eqref{138}-\eqref{140}, we obtain
\begin{equation}\label{147}
	\tilde{\delta} = \tilde{\tau} - \frac{z}{K}(\delta-\tau){\rm tr}\left(E\tilde{\mathbf{Q}}(z)\tilde{\mathbf{S}}(z)\right) + \frac{1}{K}{\rm tr}\left( \tilde{\mathbf{\Delta}}(z)\tilde{\mathbf{S}}(z)\right).
\end{equation}

Then from \eqref{145} and \eqref{147}, we obtain the following linear system
\begin{equation}\label{148}
	\begin{pmatrix}
		\delta-\tau\\
		\tilde{\delta} - \tilde{\tau}
	\end{pmatrix}=\mathbf{G}_3\begin{pmatrix}
		\delta-\tau\\
		\tilde{\delta} - \tilde{\tau}
	\end{pmatrix}+ \begin{pmatrix}
		v_3(z)\\
		\tilde{v}_3(z)
	\end{pmatrix}
\end{equation}
where
\begin{equation}\label{key}
	\mathbf{G}_3 = \begin{pmatrix}
		0,&u_3(z)\\
		\tilde{u}_3(z),&0
	\end{pmatrix}
\end{equation}
with 
\begin{align}
	&u_3 (z)=- \frac{z}{K}{\rm tr}\left(\mathbf{\Lambda}E\mathbf{Q}(z)\mathbf{\Lambda}\mathbf{S}(z)\right) ,
	\\&\tilde{u}_3(z)=  -\frac{z}{K}{\rm tr}\left(E\tilde{\mathbf{Q}}(z)\tilde{\mathbf{S}}(z)\right),
	\\& v_3(z) = \frac{1}{K}{\rm tr}\left(\mathbf{\Lambda}\mathbf{\Delta}(z)\mathbf{S}(z)\right) \label{152},
	\\& \tilde{v}_3(z) = \frac{1}{K}{\rm tr}\left( \tilde{\mathbf{\Delta}}(z)\tilde{\mathbf{S}}(z)\right)\label{153}.
\end{align}

It is shown in \cite{Dumont2010On} that $0<{\rm det}\left(\mathbf{I}-\mathbf{G}_3\right)<1$ and there exists an integer $K_3$ such that $1/{\rm det}\left(\mathbf{I}-\mathbf{G}_3\right)<\infty$ for each $K>K_3$. Hence from the linear system \eqref{148} we have
\begin{equation}\label{154}
	\tilde{\delta}-\tilde{\tau} = \frac{\tilde{u}_3(z)v_3(z)+\tilde{v}_3(z)}{{\rm det}(\mathbf{I}-\mathbf{G}_3)}
\end{equation}

Next we need to study $v_3(z)$ and $\tilde{v}_3(z)$. Substituting the expression of $\mathbf{\Delta}(z)$ (see \eqref{140}) into \eqref{152}, we obtain
\begin{equation}\label{key}
	\begin{aligned}
		v_3(z) = E\left(\frac{1}{K(1+\delta)}{\rm tr}\left(\overset{\circ}{\varsigma}\mathbf{\Lambda}\mathbf{Q}(z)\bm{\mathcal{Y}}\bm{\mathcal{Y}}^H\mathbf{S}(z)\right)\right).
	\end{aligned}
\end{equation}

Using the identity
\begin{equation}\label{key}
	\mathbf{Q}(z)\bm{\mathcal{Y}}\bm{\mathcal{Y}}^H  = \mathbf{I}_N+z\mathbf{Q}(z),
\end{equation}  
we have
\begin{equation}\label{157}
	\begin{aligned}
		v_3(z) &= E\left(\frac{z}{K(1+\delta)}{\rm tr}\left(\overset{\circ}{\varsigma}\mathbf{\Lambda}\mathbf{Q}(z)\mathbf{S}(z)\right)\right)
		\\& = \frac{z}{1+\delta}E\left[\overset{\circ}{\varsigma}\left(\frac{1}{K}{\rm tr}(\mathbf{\Lambda}\mathbf{Q}(z)\mathbf{S}(z))\right.\right.
		\\&\left.\left.\qquad\qquad\qquad- E\left(\frac{1}{K}{\rm tr}(\mathbf{\Lambda}\mathbf{Q}(z)\mathbf{S}(z)) \right)\right)\right]
		\\&{\leqslant} \frac{z}{1+\delta}\sqrt{{\rm Var}\left(\frac{1}{K}{\rm tr}\left(\mathbf{Q}(z)\mathbf{\Lambda}\right)\right)}
		\\&\qquad\quad\times\sqrt{{\rm Var}\left(\frac{1}{K}{\rm tr}(\mathbf{\Lambda}\mathbf{Q}(z)\mathbf{S}(z))\right)}
		.\end{aligned}
\end{equation}

Hence by applying \eqref{Var0} to \eqref{157}, we obtain 
\begin{equation}\label{159}
	v_3(z) \leqslant  \frac{1}{K^2}P_2(|z|)P_4(|{\rm Im}z|^{-1}).
\end{equation}

Likewise, we can obtain the upper bound of $\tilde{v}_3(z)$. For this, we can write $\tilde{v}_3'$ by using \eqref{140} in \eqref{153}
\begin{equation}\label{160}
	\begin{aligned}
		\tilde{v}_3(z) &= E\left[\frac{1}{K}{\rm tr}\left(\overset{\circ}{\tilde{\varsigma}}\tilde{\mathbf{Q}}(z)\bm{\mathcal{Y}}^H\mathbf{\Lambda}(\mathbf{I}_N+\tilde{\delta}\mathbf{\Lambda})^{-1}\bm{\mathcal{Y}}\tilde{\mathbf{S}}(z)\right)\right]
		\\& = E\left[\overset{\circ}{\tilde{\varsigma}}\left(\frac{1}{K}{\rm tr}(\tilde{\mathbf{Q}}(z)\bm{\mathcal{Y}}^H\mathbf{\Lambda}(\mathbf{I}_N+\tilde{\delta}\mathbf{\Lambda})^{-1}\bm{\mathcal{Y}}\tilde{\mathbf{S}}(z)) \right.\right.
		\\&\left.\left.\qquad\quad- E\left(\frac{1}{K}{\rm tr}(\tilde{\mathbf{Q}}(z)\bm{\mathcal{Y}}^H\mathbf{\Lambda}(\mathbf{I}_N+\tilde{\delta}\mathbf{\Lambda})^{-1}\bm{\mathcal{Y}}\tilde{\mathbf{S}}(z))\right)\right) \right]
		\\& \leqslant \sqrt{{\rm Var}\left(\frac{1}{K}{\rm tr}(\tilde{\mathbf{Q}}(z)\bm{\mathcal{Y}}^H\mathbf{\Lambda}(\mathbf{I}_N+\tilde{\delta}\mathbf{\Lambda})^{-1}\bm{\mathcal{Y}}\tilde{\mathbf{S}}(z))\right)}
		\\&\quad\times	\sqrt{{\rm Var}\left(\frac{1}{K}{\rm tr}\tilde{\mathbf{Q}}(z)\right)}
	\end{aligned}
\end{equation}

We note that $||\mathbf{\Lambda}(\mathbf{I}_N+{\tilde{\delta}}\mathbf{\Lambda})^{-1}||\leqslant \lambda_{\rm max}<\infty$ and $||\tilde{\mathbf{S}}_N(z)||\leqslant \frac{1}{|{\rm Im}z|}$. Hence by applying \eqref{Var1} and \eqref{Var2} to \eqref{160}, we have
\begin{equation}\label{163-1}
	\tilde{v}_3(z)\leqslant \frac{1}{K^2}P_2(|z|)P_4(|{\rm Im}z|^{-1})
\end{equation}

Then using \eqref{159} and \eqref{163-1} in \eqref{154} and noting that $|\tilde{u}_3|\leqslant\frac{1}{|{\rm Im}z|}$, we obtain
\begin{equation}\label{164-1}
	\begin{aligned}
		|\tilde{\delta}-\tilde{\tau}| \leqslant \frac{1}{K^2}P_2(|z|)P_5(|{\rm Im}z|^{-1})
	\end{aligned}
\end{equation}

Now let us review the equality \eqref{142}. After some manipulations, \eqref{142} is equivalent to 
\begin{equation}\label{177}
	\begin{aligned}
		\frac{1}{K}{\rm tr}(E\mathbf{Q}(z) - \mathbf{S}(z)) &= -z(\tilde{\delta}-\tilde{\tau})\frac{1}{K}{\rm tr}(E\mathbf{Q}(z)\mathbf{\Lambda}\mathbf{S}(z)) 	
		\\&\quad+\frac{1}{K}{\rm tr}(\mathbf{\Delta}(z)\mathbf{S}(z))
	\end{aligned}
\end{equation}

Then we have
\begin{equation}\label{key}
	\begin{aligned}
		|\frac{1}{K}{\rm tr}(E\mathbf{Q}(z) - \mathbf{S}(z))|&\leqslant |\tilde{\delta}-\tilde{\tau}|	\left|\frac{z}{K}{\rm tr}(E\mathbf{Q}(z)\mathbf{\Lambda}\mathbf{S}(z))\right|
		\\&\quad+\left|\frac{1}{K}{\rm tr}(\mathbf{\Delta}(z)\mathbf{S}(z))\right|
	\end{aligned}
\end{equation} 

By using \eqref{164-1}  and the facts that $\left|\frac{z}{K}{\rm tr}(E\mathbf{Q}_N(z)\mathbf{\Lambda}\mathbf{S}_N(z))\right|\leqslant c_K\frac{\lambda_{\rm max}}{|{\rm Im}z|}$, and $\left|\frac{1}{K}{\rm tr}(\mathbf{\Delta}_N(z)\mathbf{S}_N(z))\right|\leqslant K^{-2}P_2(|z|)P_4(|{\rm Im}z|^{-1})$ (the proof, based on the Poincare-Nash inequality,  is omitted.), we obtain
\begin{equation}\label{key}
	\left|\frac{1}{K}{\rm tr}(E\mathbf{Q}(z) - \mathbf{S}(z))\right| \leqslant \frac{1}{K^2}P_2(|z|)P_6(|{\rm Im}z|^{-1})
\end{equation}

By observing \eqref{V(z)} and \eqref{139}, we find that $\mathbf{V}(z)=\mathbf{S}(z)$. Hence we obtain
\begin{equation}\label{key}
	|\epsilon(z)| \leqslant \frac{1}{K^2}P_2(|z|)P_6(|{\rm Im}z|^{-1})
\end{equation}
which completes the proof of Lemma \ref{convergence rate of epsilon}.
\section*{Appendix C\\ Proof of Lemma \ref{lemma bound of epsilon'(z)} }

From \eqref{177}, we know that
\begin{equation}\label{219}
	\begin{aligned}
		&\epsilon'(z) =
		\\&\quad  -(\tilde{\delta}'-\tilde{\tau}')\frac{z}{K}{\rm tr}\left[E\mathbf{Q}'(z)\mathbf{\Lambda}\mathbf{S}(z)+E\mathbf{Q}(z)\mathbf{\Lambda}\mathbf{S}'(z)\right]
		\\&\quad-\frac{\tilde{\delta}-\tilde{\tau}}{K}{\rm tr}\left[(E\mathbf{Q}(z)+E\mathbf{Q}'(z))\mathbf{\Lambda}\mathbf{S}(z)+E\mathbf{Q}(z)\mathbf{\Lambda}\mathbf{S}'(z)\right]
		\\&\quad+\frac{1}{K}{\rm tr}\left[\mathbf{\Delta}'(z)\mathbf{S}(z)+\mathbf{\Delta}(z)\mathbf{S}'(z)\right].	
	\end{aligned}
\end{equation}
One can easily check that 
\begin{equation}\label{220}
	\frac{z}{K}{\rm tr}\left[E\mathbf{Q}'(z)\mathbf{\Lambda}\mathbf{S}(z)+E\mathbf{Q}(z)\mathbf{\Lambda}\mathbf{S}'(z)\right]\leqslant c_K|z|\frac{\lambda_{\rm max}}{|{\rm Im}z|^3},
\end{equation} 
and 
\begin{equation}\label{221}
	\begin{aligned}
		&\frac{1}{K}{\rm tr}\left[(E\mathbf{Q}(z)+E\mathbf{Q}'(z))\mathbf{\Lambda}\mathbf{S}(z)+E\mathbf{Q}(z)\mathbf{\Lambda}\mathbf{S}'(z)\right]
		\\&\leqslant c_K\left(\frac{\lambda_{\rm max}}{|{\rm Im}z|^3}+\frac{\lambda_{\rm max}}{|{\rm Im}z|^2}\right)
	\end{aligned}
\end{equation}

In addition, by the same analysis for $v_3'$ (see \eqref{definition v3'} below), we have
\begin{equation}\label{222}
	\frac{1}{K}{\rm tr}\left[\mathbf{\Delta}'(z)\mathbf{S}(z)+\mathbf{\Delta}(z)\mathbf{S}'(z)\right]\leqslant \frac{1}{K^2}P_2(|z|)P_5(|{\rm Im}z|^{-1}).
\end{equation}

Furthermore, in what follows we will prove
\begin{equation}\label{223}
	|\tilde{\delta}'-\tilde{\tau}'|\leqslant \frac{1}{K^2}P_2(|z|)P_8(|{\rm Im}z|^{-1}).
\end{equation}

Using \eqref{164-1} and \eqref{220}-\eqref{223} in \eqref{219}, we obtain
\begin{equation}\label{key}
	|\epsilon'(z)|\leqslant \frac{1}{K^2}P_3(|z|)P_{11}(|{\rm Im}z|^{-1})
\end{equation}

The remaining part is dedicated to proving \eqref{223}. Taking the derivative of both sides of \eqref{148},  we have
\begin{equation}\label{key}
	\begin{pmatrix}
		\delta'-\tau'\\
		\tilde{\delta}' - \tilde{\tau}'
	\end{pmatrix}=\mathbf{G}'_3\begin{pmatrix}
		\delta-\tau\\
		\tilde{\delta} - \tilde{\tau}
	\end{pmatrix}+ \mathbf{G}_3 \begin{pmatrix}
		\delta'-\tau'\\
		\tilde{\delta}' - \tilde{\tau}'
	\end{pmatrix}+\begin{pmatrix}
		v'_3\\
		\tilde{v}'_3
	\end{pmatrix}
\end{equation}
where 
\begin{equation}\label{key}
	\mathbf{G}_3' = \begin{pmatrix}
		0,&u_3'\\
		\tilde{u}_3',&0
	\end{pmatrix}
\end{equation}
with
\begin{align}
	&u_3'=- \frac{z}{K}{\rm tr}\left[\mathbf{\Lambda}E\mathbf{Q}'(z)\mathbf{\Lambda}\mathbf{S}(z)+\mathbf{\Lambda}E\mathbf{Q}(z)\mathbf{\Lambda}\mathbf{S}'(z)\right]
	\\&\tilde{u}_3'=  -\frac{z}{K}{\rm tr}\left(E\tilde{\mathbf{Q}}'(z)\tilde{\mathbf{S}}(z)+E\tilde{\mathbf{Q}}(z)\tilde{\mathbf{S}}'(z)\right)
	\\& v_3'= \frac{1}{K}{\rm tr}\left[\mathbf{\Lambda}\mathbf{\Delta}'(z)\mathbf{S}(z)+\mathbf{\Lambda}\mathbf{\Delta}(z)\mathbf{S}'(z)\right] \label{definition v3'}
	\\& \tilde{v}_3' = \frac{1}{K}{\rm tr}\left[ \tilde{\mathbf{\Delta}}'(z)\tilde{\mathbf{S}}(z)+\tilde{\mathbf{\Delta}}(z)\tilde{\mathbf{S}}'(z)\right]
\end{align}

Then 
\begin{equation}\label{224}
	\tilde{\delta}' - \tilde{\tau}' = \frac{1}{{\rm det}(\mathbf{I}-\mathbf{G}_3)}\left(\tilde{u}_3'(\delta-\tau) + \tilde{u}_3u_3'(	\tilde{\delta} - \tilde{\tau})+ \tilde{u}_3v_3'+\tilde{v}_3'\right)
\end{equation}  

It is easy to check that $\Vert\mathbf{S}_N(z)\Vert, \Vert\tilde{\mathbf{S}}_N(z)\Vert\leqslant \frac{1}{|{\rm Im}z|}$ and $\Vert\mathbf{S}'_N(z)\Vert, \Vert\tilde{\mathbf{S}}'_N(z)\Vert\leqslant \frac{1}{|{\rm Im}z|^2}$. Hence we obtain $|u_3'|\leqslant 2c_K\frac{\lambda_{\rm max}^2}{|{\rm Im}z|^2}$ and $|\tilde{u}_3'|\leqslant \frac{2}{|{\rm Im}z|^2}$. In addition, using the same analysis of $|\tilde{\delta}-\tilde{\tau}|$ given in \eqref{154}, we have
\begin{equation}\label{bound of delta-tau}
	|\delta-\tau|\leqslant \frac{1}{K^2}P_2(|z|)P_5(|{\rm Im}z|^{-1})
\end{equation}

From these upper bounds together with $|\tilde{u}_3|\leqslant \frac{c_K}{|{\rm Im}z|}$, we obtain the upper bound of the first two terms in brackets of \eqref{224}:
\begin{equation}\label{key}
	\frac{\tilde{u}_3'(\delta-\tau) + \tilde{u}_3u_3'(	\tilde{\delta} - \tilde{\tau})}{{\rm det}(\mathbf{I}-\mathbf{G}_3)}\leqslant \frac{1}{K^2}P_2(|z|)P_8(|{\rm Im}z|^{-1})
\end{equation}

Next we need to study $v_3'$ and $\tilde{v}_3'$. Taking derivative with respect to $z$ of \eqref{152} and \eqref{153}, we have
\begin{align}
	&v_3' = \frac{1}{K}{\rm tr}\left(\mathbf{\Lambda}\mathbf{\Delta}'(z)\mathbf{S}(z)+\mathbf{\Lambda}\mathbf{\Delta}(z)\mathbf{S}'(z)\right),\label{v3'-122}
	\\&\tilde{v}_3' = \frac{1}{K}{\rm tr}\left(\tilde{\mathbf{\Delta}}'(z)\tilde{\mathbf{S}}(z)+\tilde{\mathbf{\Delta}}(z)\tilde{\mathbf{S}}'(z)\right)\label{tilde v3'-122}.
\end{align}

Taking derivative of both sides of \eqref{140} yields
\begin{equation}\label{Delta_N' and tilde Delta_N'}
	\begin{aligned}
		&\mathbf{\Delta}'(z) = \mathbf{\Delta}_1(z) + \mathbf{\Delta}_2(z) + \mathbf{\Delta}_3(z),
		\\& \tilde{\mathbf{\Delta}}'(z) = \tilde{\mathbf{\Delta}}_1(z) + \tilde{\mathbf{\Delta}}_2(z) + \tilde{\mathbf{\Delta}}_3(z),
	\end{aligned}
\end{equation}
where
\begin{equation}\label{key}
	\begin{aligned}
		&\mathbf{\Delta}_1(z) = \frac{1}{1+\delta}E\left(\overset{\circ}{\varsigma}'\mathbf{Q}(z)\bm{\mathcal{Y}}\bm{\mathcal{Y}}^H\right),
		\\& \mathbf{\Delta}_2(z) =\frac{1}{1+\delta}E\left(\overset{\circ}{\varsigma}\mathbf{Q}'(z)\bm{\mathcal{Y}}\bm{\mathcal{Y}}^H\right),
		\\& \mathbf{\Delta}_3(z) = -\mathbf{\Delta}_N(z)\delta',
	\end{aligned}
\end{equation}
and
\begin{equation}\label{key}
	\begin{aligned}
		&\tilde{\mathbf{\Delta}}_1(z) = E\left(\overset{\circ}{\tilde{\varsigma}}'\tilde{\mathbf{Q}}(z)\bm{\mathcal{Y}}^H\mathbf{\Lambda}(\mathbf{I}_N+\tilde{\delta}\mathbf{\Lambda})^{-1}\bm{\mathcal{Y}}\right),
		\\	&\tilde{\mathbf{\Delta}}_2(z) = E\left(\overset{\circ}{\tilde{\varsigma}}\tilde{\mathbf{Q}}'(z)\bm{\mathcal{Y}}^H\mathbf{\Lambda}(\mathbf{I}_N+\tilde{\delta}\mathbf{\Lambda})^{-1}\bm{\mathcal{Y}}\right),
		\\&	\tilde{\mathbf{\Delta}}_3(z) =-   E\left(\overset{\circ}{\tilde{\varsigma}}\tilde{\delta}'\tilde{\mathbf{Q}}(z)\bm{\mathcal{Y}}^H\mathbf{\Lambda}(\mathbf{I}_N+\tilde{\delta}\mathbf{\Lambda})^{-1}\mathbf{\Lambda}(\mathbf{I}_N+\tilde{\delta}\mathbf{\Lambda})^{-1}\bm{\mathcal{Y}}\right).
	\end{aligned}
\end{equation}

Then substituting \eqref{Delta_N' and tilde Delta_N'} into \eqref{v3'-122} gives
\begin{equation}\label{v3'}
	\begin{aligned}
		v_3' &= \frac{1}{K}{\rm tr}\left[\mathbf{\Lambda}\mathbf{\Delta}_1(z)\mathbf{S}(z)+\mathbf{\Lambda}\mathbf{\Delta}_2(z)\mathbf{S}(z)+\mathbf{\Lambda}\mathbf{\Delta}_3(z)\mathbf{S}(z)\right.
		\\&\left.\qquad\quad+\mathbf{\Lambda}\mathbf{\Delta}(z)\mathbf{S}'(z)\right] .
	\end{aligned}
\end{equation}

Let us now find the upper bound of the first term of RHS of \eqref{v3'}. Substituting the expression of $\mathbf{\Delta}_1$ and using the identity $\mathbf{Q}(z)\bm{\mathcal{Y}}\bm{\mathcal{Y}}^H = \mathbf{I}_N+z\mathbf{Q}(z)$ yield
\begin{equation}\label{key}
	\small\begin{aligned}
		&\frac{1}{K}{\rm tr}(\mathbf{\Lambda}\mathbf{\Delta}_1(z)\mathbf{S}(z)) 
		\\&\quad= E\left[\frac{z}{K(1+\delta)}{\rm tr}\left(\overset{\circ}{\varsigma}'\mathbf{\Lambda}\mathbf{Q}(z)\mathbf{S}(z)\right)\right]
		\\&\quad = \frac{z}{1+\delta}E\left[\overset{\circ}{\varsigma}'\left(\frac{1}{K}{\rm tr}(\mathbf{\Lambda}\mathbf{Q}(z)\mathbf{S}(z)) -E\left(\frac{1}{K}{\rm tr}(\mathbf{\Lambda}\mathbf{Q}(z)\mathbf{S}(z)) \right)\right)\right]
		\\&\quad\leqslant \frac{z}{1+\delta}\sqrt{{\rm Var}\left(\frac{1}{K}{\rm tr}(\mathbf{\Lambda}\mathbf{Q}'(z))\right){\rm Var}\left(\frac{1}{K}{\rm tr}(\mathbf{\Lambda}\mathbf{Q}(z)\mathbf{S}(z))\right)}
	\end{aligned}
\end{equation}

We need the following lemma.
\begin{lemma}\label{Lemma Var Q'}
	For arbitrary matrices $\mathbf{C}_1,\mathbf{C}_2,\mathbf{D}_1\in\mathbb{C}^{N\times N} $, $\mathbf{D}_2\in\mathbb{C}^{K\times K}$ with uniformly bounded spectral norms, it holds true that
	\begin{align}
		&	{\rm Var}\left(\frac{1}{K}{\rm tr}\left(\mathbf{C}_1\mathbf{Q}'(z)\mathbf{C}_2\right)\right)\leqslant\frac{1}{K^2}P_{1}(|z|)P_{6}(|{\rm Im}z|^{-1}), \label{Lemma Var Q' 1}
		\\& {\rm Var}\left(\frac{1}{K}{\rm tr}\tilde{\mathbf{Q}}'(z)\right)\leqslant \frac{1}{K^2}P_{1}(|z|)P_{6}(|{\rm Im}z|^{-1}),\label{Lemma Var Q' 3}
		\\& {\rm Var}\left(\frac{1}{K}{\rm tr}(\tilde{\mathbf{Q}}'(z)\bm{\mathcal{Y}}^H\mathbf{D}_1\bm{\mathcal{Y}}\mathbf{D}_2)\right)\leqslant\frac{1}{K^2}P_{3}(|z|)P_{6}(|{\rm Im}z|^{-1})\label{Lemma Var Q' 2}.
	\end{align}
\end{lemma}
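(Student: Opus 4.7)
The plan is to reduce each of \eqref{Lemma Var Q' 1}--\eqref{Lemma Var Q' 2} to the corresponding bound in Lemma \ref{Lemma inequality}, exploiting the resolvent identities $\mathbf{Q}'(z)=\mathbf{Q}(z)^{2}$ and $\tilde{\mathbf{Q}}'(z)=\tilde{\mathbf{Q}}(z)^{2}$, which follow by differentiating $(\bm{\mathcal{Y}}\bm{\mathcal{Y}}^{H}-z\mathbf{I}_{N})\mathbf{Q}(z)=\mathbf{I}_{N}$ and its analogue for $\tilde{\mathbf{Q}}(z)$ with respect to $z$. These identities turn each $\mathbf{Q}'$-trace into a trace containing one additional resolvent factor, a situation that is already covered by Lemma \ref{Lemma inequality} once the extra matrix is absorbed into $\mathbf{C}_{1}$, $\mathbf{C}_{2}$, $\mathbf{D}_{1}$, or $\mathbf{D}_{2}$.

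The most economical implementation uses the Cauchy integral formula: for any $0<r<|\mathrm{Im}\,z|$,
\begin{equation*}
\tfrac{1}{K}\mathrm{tr}(\mathbf{C}_{1}\mathbf{Q}'(z)\mathbf{C}_{2})=\frac{1}{2\pi\mathsf{i}}\oint_{|\zeta-z|=r}\frac{\tfrac{1}{K}\mathrm{tr}(\mathbf{C}_{1}\mathbf{Q}(\zeta)\mathbf{C}_{2})}{(\zeta-z)^{2}}\,d\zeta,
\end{equation*}
and analogously for the other two traces. Subtracting the mean inside the integral, squaring, and applying the Cauchy-Schwarz inequality in $L^{2}(|d\zeta|)$ together with Fubini's theorem yields
\begin{equation*}
\mathrm{Var}\!\left(\tfrac{1}{K}\mathrm{tr}(\mathbf{C}_{1}\mathbf{Q}'(z)\mathbf{C}_{2})\right)\leqslant\frac{L}{4\pi^{2}}\oint_{|\zeta-z|=r}\frac{\mathrm{Var}(\tfrac{1}{K}\mathrm{tr}(\mathbf{C}_{1}\mathbf{Q}(\zeta)\mathbf{C}_{2}))}{|\zeta-z|^{4}}|d\zeta|,
\end{equation*}
where $L=2\pi r$ is the length of the contour. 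Choosing $r=|\mathrm{Im}\,z|/2$ keeps $|\mathrm{Im}\,\zeta|\geqslant|\mathrm{Im}\,z|/2$ throughout the contour, so \eqref{Var0} of Lemma \ref{Lemma inequality} applies pointwise with bound $\tfrac{1}{K^{2}}P_{1}(|z|)P_{4}(|\mathrm{Im}\,z|^{-1})$; the $|\zeta-z|^{-4}$ kernel together with the length factor $L$ then contributes an additional $|\mathrm{Im}\,z|^{-2}$, producing exactly the $\tfrac{1}{K^{2}}P_{1}(|z|)P_{6}(|\mathrm{Im}\,z|^{-1})$ bound of \eqref{Lemma Var Q' 1}. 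The arguments for \eqref{Lemma Var Q' 3} and \eqref{Lemma Var Q' 2} are identical, invoking \eqref{Var1} and \eqref{Var2} respectively; the $P_{3}(|z|)$ factor in \eqref{Lemma Var Q' 2} is inherited automatically from the pointwise bound.

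An equivalent direct route would apply the Poincar\'e-Nash inequality to $\tfrac{1}{K}\mathrm{tr}(\mathbf{Q}(z)\mathbf{A}\mathbf{Q}(z))$ (with $\mathbf{A}=\mathbf{C}_{2}\mathbf{C}_{1}$, and analogously for the other two traces), in the spirit of Appendix D. The product rule applied to the two resolvent factors, combined with $\partial\mathbf{Q}/\partial\bm{\mathcal{X}}_{ij}=-\mathbf{Q}[\partial(\bm{\mathcal{Y}}\bm{\mathcal{Y}}^{H})/\partial\bm{\mathcal{X}}_{ij}]\mathbf{Q}$, would generate a finite sum of traces containing three copies of $\mathbf{Q}$ rather than two; the extra resolvent, bounded in spectral norm by $|\mathrm{Im}\,z|^{-1}$, would contribute the additional $|\mathrm{Im}\,z|^{-2}$ in the squared derivative that upgrades $P_{4}$ to $P_{6}$. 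The main obstacle in this direct route is the combinatorial bookkeeping of the enlarged family of product-rule terms and the associated spectral-norm control, together with the polynomial $\Vert\bm{\mathcal{Y}}\Vert$-moment estimates needed for \eqref{Lemma Var Q' 2}. The Cauchy-integral route sidesteps all of this by black-boxing Lemma \ref{Lemma inequality}, and is therefore the approach I would present.
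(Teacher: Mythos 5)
Your proof is correct, but it follows a genuinely different route from the paper. The paper proves Lemma \ref{Lemma Var Q'} by a direct Poincar\'e--Nash computation (your "alternative" route): it uses $\mathbf{Q}'(z)=\mathbf{Q}(z)\mathbf{Q}(z)$ and $\tilde{\mathbf{Q}}'(z)=\tilde{\mathbf{Q}}(z)\tilde{\mathbf{Q}}(z)$, works out the entrywise derivatives such as $\partial(\mathbf{Q}')_{pq}/\partial\bm{\mathcal{Y}}_{ij}=-\mathbf{Q}_{pi}(\bm{\mathcal{Y}}^H\mathbf{Q}')_{jq}-(\mathbf{Q}')_{pi}(\bm{\mathcal{Y}}^H\mathbf{Q})_{jq}$, and then controls the resulting traces with the identities $\mathbf{Q}\bm{\mathcal{Y}}\bm{\mathcal{Y}}^H=\mathbf{I}_N+z\mathbf{Q}$, $\tilde{\mathbf{Q}}\bm{\mathcal{Y}}^H\bm{\mathcal{Y}}=\mathbf{I}_K+z\tilde{\mathbf{Q}}$ and spectral-norm bounds; the extra resolvent factor is what raises $P_4$ to $P_6$ there as well. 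Your Cauchy-integral argument instead black-boxes Lemma \ref{Lemma inequality}: writing the derivative as $\frac{1}{2\pi\mathsf{i}}\oint_{|\zeta-z|=|{\rm Im}z|/2}(\zeta-z)^{-2}(\cdot)\,d\zeta$, exchanging expectation and integration, and applying Cauchy--Schwarz transfers each variance bound at $\zeta$ (with $|{\rm Im}\zeta|\geqslant|{\rm Im}z|/2$, $|\zeta|\leqslant\tfrac32|z|$) to the derivative at $z$, with the kernel and contour length contributing exactly the additional $|{\rm Im}z|^{-2}$; the degree bookkeeping $P_1P_6$, $P_1P_6$, $P_3P_6$ comes out right. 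This is shorter and more modular (any future variance bound for a trace functional automatically yields one for its $z$-derivative), while the paper's route is self-contained and keeps explicit constants in the same style as its other appendices. One caveat: your opening suggestion that the extra resolvent from $\mathbf{Q}'=\mathbf{Q}^2$ could simply be "absorbed" into $\mathbf{C}_1,\mathbf{C}_2,\mathbf{D}_1,\mathbf{D}_2$ of Lemma \ref{Lemma inequality} does not work literally, since those matrices must be deterministic (independent of $\bm{\mathcal{Y}}$) for the Poincar\'e--Nash differentiation in its proof, whereas $\mathbf{Q}(z)$ is random; this does not affect your actual argument, which never uses that reduction, but the sentence should be dropped or qualified.
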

\begin{proof}
	See Appendix E in Supplementary Material (pages 10-11).
\end{proof}

Noticing that $|\frac{z}{1+\delta}|\leqslant |z|$  and using \eqref{Var0} and \eqref{Lemma Var Q' 1}, we obtain
\begin{equation}\label{v3'-1}
	\frac{1}{K}{\rm tr}(\mathbf{\Lambda}\mathbf{\Delta}_1(z)\mathbf{S}_N(z))\leqslant \frac{1}{K^2}P_{2}(|z|)P_{5}(|{\rm Im}z|^{-1})
\end{equation}

Noticing that $\mathbf{Q}'(z) = \mathbf{Q}(z)\mathbf{Q}(z)$ and using the same analysis, we obtain
\begin{equation}\label{v3'-2}\small
	\begin{aligned}
		&\frac{1}{K}{\rm tr}(\mathbf{\Lambda}\mathbf{\Delta}_2(z)\mathbf{S}(z)) 
		\\&= \frac{z}{1+\delta}E\left[{\frac{1}{K}\rm tr}\left(\overset{\circ}{\varsigma}\mathbf{\Lambda}\mathbf{Q}'(z)\mathbf{S}_N(z)\right)\right]
		\\& = \frac{z}{1+\delta}E\left[\overset{\circ}{\varsigma}\left(\frac{1}{K}{\rm tr}(\mathbf{\Lambda}\mathbf{Q}'(z)\mathbf{S}(z)) - E\left(\frac{1}{K}{\rm tr}(\mathbf{\Lambda}\mathbf{Q}'(z)\mathbf{S}(z))\right)\right)\right]
		\\& \leqslant \frac{z}{1+\delta}\sqrt{{\rm Var}\left(\frac{1}{K}{\rm tr}(\mathbf{\Lambda}\mathbf{Q}(z))\right){\rm Var}\left(\frac{1}{K}{\rm tr}(\mathbf{\Lambda}\mathbf{Q}'(z)\mathbf{S}(z))\right)}
		\\&
		\leqslant \frac{1}{K^2}P_{2}(|z|)P_{5}(|{\rm Im}z|^{-1})
	\end{aligned}
\end{equation}

Similarly, we have
\begin{equation}\label{v3'-3}\small
	\begin{aligned}
		&\left|	\frac{1}{K}{\rm tr}(\mathbf{\Lambda}\mathbf{\Delta}_3(z)\mathbf{S}(z)) \right|
		\\&= \left|\frac{z\delta'}{1+\delta}E\left[\frac{1}{K}{\rm tr}(\overset{\circ}{\varsigma}\mathbf{\Lambda}\mathbf{Q}(z)\mathbf{S}(z))\right]\right|
		\\& = \left|\frac{z\delta'}{1+\delta}E\left[\overset{\circ}{\varsigma}\left(\frac{1}{K}{\rm tr}(\mathbf{\Lambda}\mathbf{Q}(z)\mathbf{S}(z))-E\left(\frac{1}{K}{\rm tr}(\mathbf{\Lambda}\mathbf{Q}(z)\mathbf{S}(z))\right)\right)\right]\right|
		\\&\leqslant \frac{z\delta'}{1+\delta}\sqrt{{\rm Var}\left(\frac{1}{K}{\rm tr}(\mathbf{\Lambda}\mathbf{Q}(z))\right){\rm Var}\left(\frac{1}{K}{\rm tr}(\mathbf{\Lambda}\mathbf{Q}(z)\mathbf{S}(z))\right)}
		\\&\leqslant \frac{1}{K^2}P_1(|z|)P_5(|{\rm Im}z|^{-1})
	\end{aligned}
\end{equation}
and
\begin{equation}\label{v3'-4}
	\begin{aligned}
		&	\frac{1}{K}{\rm tr}\left(\mathbf{\Lambda}\mathbf{\Delta}(z)\mathbf{S}'_N(z)\right) 
		\\&= \frac{z}{1+\delta}E\left[\frac{1}{K}{\rm tr}(\overset{\circ}{\varsigma}\mathbf{\Lambda}\mathbf{Q}(z)\mathbf{S}'(z))\right]
		\\&\leqslant \frac{z}{1+\delta}\sqrt{{\rm Var}\left(\frac{1}{K}{\rm tr}(\mathbf{\Lambda}\mathbf{Q}(z))\right){\rm Var}\left(\frac{1}{K}{\rm tr}(\mathbf{\Lambda}\mathbf{Q}(z)\mathbf{S}'(z))\right)}
		\\&\leqslant \frac{1}{K^2}P_2(|z|)P_4(|{\rm Im}z|^{-1})
	\end{aligned}
\end{equation}

From \eqref{v3'-1}, \eqref{v3'-2}, \eqref{v3'-3} and \eqref{v3'-4}, we have
\begin{equation}\label{bound of v3'}
	|	v_3'|\leqslant \frac{1}{K^2}P_2(|z|)P_5(|{\rm Im}z|^{-1})
\end{equation}

Next  we find the  bound of $\tilde{v}_3'$. Using \eqref{Delta_N' and tilde Delta_N'}in \eqref{tilde v3'-122}, we can expand $\tilde{v}_3'$ as
\begin{equation}\label{136}                                                                    
	\tilde{v}_3'=\frac{1}{K}{\rm tr}\left(\tilde{\mathbf{\Delta}}_1\tilde{\mathbf{S}}(z)+\tilde{\mathbf{\Delta}}_2\tilde{\mathbf{S}}(z)+\tilde{\mathbf{\Delta}}_3\tilde{\mathbf{S}}(z) +\tilde{\mathbf{\Delta}}\tilde{\mathbf{S}}'(z)\right)
\end{equation}

For the first term of \eqref{136}, we have
\begin{equation}\label{tilde v3'-1}
	\begin{aligned}
		&\frac{1}{K}{\rm tr}(\tilde{\mathbf{\Delta}}_1\tilde{\mathbf{S}}(z)) 
		\\&= \frac{1}{K}E{\rm tr}\left(\overset{\circ}{\tilde{\varsigma}}'\tilde{\mathbf{Q}}(z)\bm{\mathcal{Y}}^H\mathbf{\Lambda}(\mathbf{I}_N+\tilde{\delta}\mathbf{\Lambda})^{-1}\bm{\mathcal{Y}}\tilde{\mathbf{S}}(z)\right)
		\\& = E\left[\overset{\circ}{\tilde{\varsigma}}'\left(\frac{1}{K}{\rm tr}(\tilde{\mathbf{Q}}(z)\bm{\mathcal{Y}}^H\mathbf{\Lambda}(\mathbf{I}_N+\tilde{\delta}\mathbf{\Lambda})^{-1}\bm{\mathcal{Y}}\tilde{\mathbf{S}}(z))
		\right.\right.\\&\qquad\quad\left.\left.-E\left(\frac{1}{K}{\rm tr}(\tilde{\mathbf{Q}}(z)\bm{\mathcal{Y}}^H\mathbf{\Lambda}(\mathbf{I}_N+\tilde{\delta}\mathbf{\Lambda})^{-1}\bm{\mathcal{Y}}\tilde{\mathbf{S}}(z))\right)\right)\right]
		\\&\leqslant \sqrt{{\rm Var}\left(\frac{1}{K}{\rm tr}(\tilde{\mathbf{Q}}(z)\bm{\mathcal{Y}}^H\mathbf{\Lambda}(\mathbf{I}_N+\tilde{\delta}\mathbf{\Lambda})^{-1}\bm{\mathcal{Y}}\tilde{\mathbf{S}}(z))\right)}
		\\&\quad\times\sqrt{{\rm Var}\left(\frac{1}{K}{\rm tr}\tilde{\mathbf{Q}}'(z)\right)}
		\\&\overset{(a)}{\leqslant } \frac{1}{K^2}P_2(|z|)P_5(|{\rm Im}z|^{-1})
	\end{aligned}
\end{equation}
where $(a)$ follows from \eqref{Lemma Var Q' 1}.

For the second term, we have
\begin{equation}\label{tilde v3'-2}
	\begin{aligned}
		&\frac{1}{K}{\rm tr}(\tilde{\mathbf{\Delta}}_2\tilde{\mathbf{S}}(z)) 
		\\&= \frac{1}{K}E{\rm tr}\left(\overset{\circ}{\tilde{\varsigma}}\tilde{\mathbf{Q}}'
		(z)\bm{\mathcal{Y}}^H\mathbf{\Lambda}(\mathbf{I}_N+\tilde{\delta}\mathbf{\Lambda})^{-1}\bm{\mathcal{Y}}\tilde{\mathbf{S}}(z)\right)
		\\&\leqslant  \sqrt{{\rm Var}\left(\frac{1}{K}{\rm tr}(\tilde{\mathbf{Q}}'(z)\bm{\mathcal{Y}}^H\mathbf{\Lambda}(\mathbf{I}_N+\tilde{\delta}\mathbf{\Lambda})^{-1}\bm{\mathcal{Y}}\tilde{\mathbf{S}}(z))\right)}
		\\&\quad\times\sqrt{{\rm Var}\left(\frac{1}{K}{\rm tr}\tilde{\mathbf{Q}}(z)\right)}
		\\&\overset{(a)}{\leqslant } \frac{1}{K^2}P_2(|z|)P_5(|{\rm Im}z|^{-1})
	\end{aligned}
\end{equation}
where $(a)$ follows from \eqref{Var1} and \eqref{Lemma Var Q' 2}.

For the third term, noticing that $\tilde{\delta}'\leqslant \frac{c_K}{|{\rm Im}z|^2}$ and by using the same analysis, we have
\begin{equation}\label{tilde v3'-3}
	\left|	\frac{1}{K}{\rm tr}(\tilde{\mathbf{\Delta}}_3\tilde{\mathbf{S}}(z))\right|\leqslant\frac{1}{K^2}P_2(|z|)P_7(|{\rm Im}z|^{-1})
\end{equation}

Likewise, for the fourth term, we have
\begin{equation}\label{tilde v3'-4}
	\frac{1}{K}{\rm tr}(\tilde{\mathbf{\Delta}}_3\tilde{\mathbf{S}}(z))\leqslant \frac{1}{K^2}P_2(|z|)P_4(|{\rm Im}z|^{-1})
\end{equation}\

Reviewing \eqref{v3'} and using \eqref{tilde v3'-1}, \eqref{tilde v3'-2}, \eqref{tilde v3'-3} and \eqref{tilde v3'-4}, we have
\begin{equation}\label{bound of tilde v3'}
	|\tilde{v}_3'|\leqslant\frac{1}{K^2}P_2(|z|)P_7(|{\rm Im}z|^{-1})
\end{equation}

Finally, using \eqref{164-1}, \eqref{bound of delta-tau}, \eqref{bound of v3'} and \eqref{bound of tilde v3'} in \eqref{224}, and noticing that $|\tilde{u}_3'|\leqslant\frac{2}{|{\rm Im}z|^2},|\tilde{u}_3|\leqslant \frac{1}{|{\rm Im}z|}$, and $|u_3'|\leqslant 2c_K\frac{\lambda^2_{\rm max}}{|{\rm Im}z|^2}$, we obtain 
\begin{equation}\label{key}
	|\tilde{\delta}'-\tilde{\tau}'|\leqslant \frac{1}{K^2}P_2(|z|)P_8(|{\rm Im}z|^{-1})
\end{equation}
which completes the proof of \eqref{223}.
\section*{Appendix D \\ Proof of Lemma \ref{Lemma inequality}}
The proof of  the inequalities in Lemma \ref{Lemma inequality}  relies on the Poincare-Nash inequality (see for instance, \cite{Dumont2010On,Vallet2012Improved}) that has been widely used to deal with the variances in RMT.

For convenience, we write ${\mathbf{Q}}(z)$ and $\tilde{\mathbf{Q}}(z)$ as ${\mathbf{Q}}$ and $\tilde{\mathbf{Q}}$,  respectively.

We first establish \eqref{Var0}.  We remark that
\begin{equation}\label{key}
	\small\begin{aligned}
		\frac{\partial{(\mathbf{C}_1\mathbf{Q}\mathbf{C}_2)_{pq}}}{\partial \bm{\mathcal{Y}}_{ij}} &= \sum_{r,s}\frac{\partial (\mathbf{C}_1)_{pr}\mathbf{Q}_{rs}(\mathbf{C}_2)_{sq}}{\partial \bm{\mathcal{Y}}_{ij}}
		\\& = \sum_{r,s}(\mathbf{C}_1)_{pr}\left(-\mathbf{Q}\frac{\partial \bm{\mathcal{Y}}\bm{\mathcal{Y}}^H}{\partial \bm{\mathcal{Y}}_{ij}}\mathbf{Q}\right)_{rs}(\mathbf{C}_2)_{sq}
		\\& =  \sum_{r,s}(\mathbf{C}_1)_{pr}\left(-\sum_{m,n,t}\mathbf{Q}_{rm}\frac{\partial \bm{\mathcal{Y}}_{mn}\overline{\bm{\mathcal{Y}}_{tn}}}{\partial \bm{\mathcal{Y}}_{ij}}\mathbf{Q}_{ts}\right)(\mathbf{C}_2)_{sq}
		\\& = -(\mathbf{D}_1\mathbf{Q})_{pi}(\bm{\mathcal{Y}}^H\mathbf{Q}\mathbf{D}_2)_{jq}
	\end{aligned}
\end{equation}
Likewise, we can obtain
\begin{equation}\label{key}
	\begin{aligned}
		\frac{\partial{(\mathbf{C}_1\mathbf{Q}\mathbf{C}_2)_{pq}}}{\partial \bar{\bm{\mathcal{Y}}}_{ij}} = -(\mathbf{C}_1\mathbf{Q}\bm{\mathcal{Y}})_{pj} (\mathbf{Q}\mathbf{C}_2)_{iq}
	\end{aligned}
\end{equation}

Then using the Poincare-Nash inequality,  we have
\begin{equation}\label{key}
	\begin{aligned}
		&{\rm Var}\left(\frac{1}{K}{\rm tr}(\mathbf{C}_1\mathbf{Q}\mathbf{C}_2)\right)
		\\&\leqslant \frac{\lambda_{\rm max}}{K}\sum_{i,j}E\left(\left|\frac{1}{K}\sum_{p,q}\frac{\partial{(\mathbf{C}_1\mathbf{Q}\mathbf{C}_2)_{pq}}}{\partial \bm{\mathcal{Y}}_{ij}}\right|^2\right.
		\\&\left.+\left|\frac{1}{K}\sum_{p,q}\frac{\partial{(\mathbf{C}_1\mathbf{Q}\mathbf{C}_2)_{pq}}}{\partial \overline{\bm{\mathcal{Y}}_{ij}}}\right|^2\right)
		\\&\leqslant \frac{\lambda_{\rm max}}{K^3}\sum_{i,j}E\left(\left|(\bm{\mathcal{Y}}^H\mathbf{Q}\mathbf{C}_2\mathbf{C}_1\mathbf{Q})_{ji}\right|^2+\left|(\mathbf{Q}\mathbf{C}_2\mathbf{C}_1\mathbf{Q}\bm{\mathcal{Y}})_{ij}\right|^2\right)
		\\&\leqslant \frac{\lambda_{\rm max}}{K^3}E\left({\rm tr}(\bm{\mathcal{Y}}^H\mathbf{Q}\mathbf{D}_2\mathbf{D}_1\mathbf{Q}\mathbf{Q}^H\mathbf{D}_1^H\mathbf{D}_2^H\mathbf{Q}^H\bm{\mathcal{Y}})
		\right.\\&\quad\left.+{\rm tr}(\mathbf{Q}\mathbf{C}_2\mathbf{C}_1\mathbf{Q}\bm{\mathcal{Y}}\bm{\mathcal{Y}}^H\mathbf{Q}^H\mathbf{C}_1^H\mathbf{C}_2^H\mathbf{Q}^H)\right)
		\\&\leqslant \frac{\lambda_{\rm max}}{K^3}||\mathbf{C}_1||^2||\mathbf{C}_2||^2E\left({\rm tr}(\bm{\mathcal{Y}}^H\mathbf{Q}\mathbf{Q}\mathbf{Q}^H\mathbf{Q}^H\bm{\mathcal{Y}}) 
		\right.\\&\left.\qquad\qquad\qquad\qquad\quad+{\rm tr}(\mathbf{Q}\mathbf{Q}\bm{\mathcal{Y}}\bm{\mathcal{Y}}^H\mathbf{Q}^H\mathbf{Q}^H)\right).
	\end{aligned}
\end{equation}

By using the identities
\begin{equation}\label{QYY' = I+zQ}
	\begin{aligned}
		&\mathbf{Q}\bm{\mathcal{Y}}\bm{\mathcal{Y}}^H = \mathbf{I}_N+z\mathbf{Q},
		\\&\mathbf{Q}^H\bm{\mathcal{Y}}\bm{\mathcal{Y}}^H = \mathbf{I}_N+\bar{z}\mathbf{Q}^H,
	\end{aligned}
\end{equation}
we obtain
\begin{equation}\label{108}
\small	\begin{aligned}
		&{\rm Var}\left(\frac{1}{K}{\rm tr}(\mathbf{C}_1\mathbf{Q}\mathbf{C}_2)\right)
		\\&\leqslant\frac{\lambda_{\rm max}}{K^2}||\mathbf{C}_1||^2||\mathbf{C}_2||^2||\mathbf{Q}||^3E\left(\frac{2}{K}{\rm tr}(\mathbf{I}_N+z\mathbf{Q})\right)
		\\&\leqslant 2\frac{\lambda_{\rm max}}{K^2}||\mathbf{C}_1||^2||\mathbf{C}_2||^2\frac{1}{|{\rm Im}z|^3}(1+\frac{|z|}{|{\rm Im}z|})
		\\&\leqslant 2\frac{\lambda_{\rm max}}{K^2}||\mathbf{C}_1||^2||\mathbf{C}_2||^2 (|z|+1)\left(\frac{1}{|{\rm Im}z|^3}+\frac{1}{|{\rm Im}z|^4}\right).
	\end{aligned}
\end{equation}

Because $\lambda_{\rm max}<\infty$, $||\mathbf{C}_1||,||\mathbf{C}_2||<\infty$, we  obtain \eqref{Var0} from \eqref{108}.

Next we establish \eqref{Var1}. Note that 
\begin{equation}\label{162}
\small	\begin{aligned}
		\frac{\partial \tilde{\mathbf{Q}}_{pq}}{\partial {{\bm{\mathcal{Y}}}_{ij}}} &=-\sum_{s,t,n}\tilde{\mathbf{Q}}_{ps}\frac{\partial \bar{\bm{\mathcal{Y}}}_{ts}{\bm{\mathcal{Y}}}_{tn}}{\partial \bm{\mathcal{Y}}_{ij}}\tilde{\mathbf{Q}}_{nq}
		\\&= -\left(\tilde{\mathbf{Q}}\bm{\mathcal{Y}}^H\right)_{pi}\tilde{\mathbf{Q}}_{jq}
	\end{aligned}
\end{equation}
and
\begin{equation}\label{163}
\small	\begin{aligned}
		\frac{\partial \tilde{\mathbf{Q}}_{pq}}{\partial \bar{\bm{\mathcal{Y}}}'_{ij}} 
		&=-\sum_{s,t,n}\tilde{\mathbf{Q}}_{ps}\frac{\partial \overline{\bm{\mathcal{Y}}_{ts}}{\bm{\mathcal{Y}}}_{tn}}{\partial \overline{\bm{\mathcal{Y}}_{ij}}}\tilde{\mathbf{Q}}_{nq}
		\\&=-\tilde{\mathbf{Q}}_{pj}\left(\bm{\mathcal{Y}}\tilde{\mathbf{Q}}\right)_{iq}	.
	\end{aligned}
\end{equation}

By using the Poincare-Nash inequality, we have
\begin{equation}\label{key}
	\small \begin{aligned}
		&{\rm Var}\left(\frac{1}{K}{\rm tr}\tilde{\mathbf{Q}}\right)
		\\&\leqslant\frac{\lambda_{\rm max}}{K}\sum_{i,j}E\left(\left|\frac{1}{K}\sum_{p,q}\frac{\partial \tilde{\mathbf{Q}}_{pq}}{\partial {{\bm{\mathcal{Y}}}_{ij}}}\right|^2+\left|\frac{1}{K}\sum_{p,q}\frac{\partial \tilde{\mathbf{Q}}_{pq}}{\partial \overline{\bm{\mathcal{Y}}_{ij}}} \right|^2\right)
		\\&\leqslant\frac{\lambda_{\rm max}}{K^3}\sum_{i,j}\left(E\left|\left(\tilde{\mathbf{Q}}\tilde{\mathbf{Q}}\bm{\mathcal{Y}}^H\right)_{ji}\right|^2 + E\left|\left(\bm{\mathcal{Y}}\tilde{\mathbf{Q}}\tilde{\mathbf{Q}}\right)_{ij}\right|^2\right)
		\\& \leqslant \frac{\lambda_{\rm max}}{K^3}\sum_{i} E\left(\tilde{\mathbf{Q}}\tilde{\mathbf{Q}}^H\bm{\mathcal{Y}}^H\bm{\mathcal{Y}}\tilde{\mathbf{Q}}^H\tilde{\mathbf{Q}}\right)_{ii}\\&\quad\quad\qquad+E\left(\bm{\mathcal{Y}}\tilde{\mathbf{Q}}\tilde{\mathbf{Q}}\tilde{\mathbf{Q}}^H\tilde{\mathbf{Q}}^H\bm{\mathcal{Y}}^H\right)_{ii}
		\\& = \frac{\lambda_{\rm max}}{K^3}\left(E{\rm tr}\left(\tilde{\mathbf{Q}}\tilde{\mathbf{Q}}^H\tilde{\mathbf{Q}}^H\tilde{\mathbf{Q}}\bm{\mathcal{Y}}^H\bm{\mathcal{Y}}\right) +E{\rm tr}\left(\tilde{\mathbf{Q}}\tilde{\mathbf{Q}}\tilde{\mathbf{Q}}^H\tilde{\mathbf{Q}}^H\bm{\mathcal{Y}}^H\bm{\mathcal{Y}}\right)\right).
	\end{aligned}
\end{equation}

Using  the identities
\begin{equation}\label{164}
	\small \begin{aligned}
		&	\tilde{\mathbf{Q}}\bm{\mathcal{Y}}^H\bm{\mathcal{Y}} = \mathbf{I}_K+z\tilde{\mathbf{Q}},
		\\& \tilde{\mathbf{Q}}^H\bm{\mathcal{Y}}^H\bm{\mathcal{Y}} = \mathbf{I}_K+\bar{z}\tilde{\mathbf{Q}}^H,
	\end{aligned}
\end{equation}
gives
\begin{equation}\label{}
	\small \begin{aligned}
		{\rm Var}\left(\frac{1}{K}{\rm tr}\tilde{\mathbf{Q}}\right)&\leqslant\frac{\lambda_{\rm max}}{K^3}E{\rm tr}\left(\tilde{\mathbf{Q}}\tilde{\mathbf{Q}}^H\tilde{\mathbf{Q}}^H( \mathbf{I}_K+z\tilde{\mathbf{Q}})\right) \\&\quad+\frac{\lambda_{\rm max}}{K^3}E{\rm tr}\left(\tilde{\mathbf{Q}}\tilde{\mathbf{Q}}\tilde{\mathbf{Q}}^H( \mathbf{I}_K+\bar{z}\tilde{\mathbf{Q}})\right)
		\\&\leqslant\frac{\lambda_{\rm max}}{K^2}||\tilde{\mathbf{Q}}||^3\left(\left|E{\rm tr}\left(\frac{1}{K}(\mathbf{I}_K+z\tilde{\mathbf{Q}})\right)\right|\right.
		\\&\left.\qquad\qquad\qquad+\left|E{\rm tr}\left(\frac{1}{K}(\mathbf{I}_K+\bar{z}\tilde{\mathbf{Q}}^H)\right)\right|\right)
		\\&\leqslant\frac{\lambda_{\rm max}}{K^2}\frac{2}{|{\rm Im}z|^3}\left(1+\frac{|z|}{|{\rm Im}z|}\right)
		\\&\leqslant \frac{\lambda_{\rm max}}{K^2}(|z|+1)\left(\frac{2}{|{\rm Im}z|^3}+\frac{2}{|{\rm Im}z|^4}\right).
	\end{aligned}
\end{equation}

Since $\lambda_{\rm max}<\infty$, it is easy to obtain \eqref{Var1}.

Finally we establish \eqref{Var2}. Applying the Poincare-Nash inequality gives
\begin{equation}\label{166}
\small 	\begin{aligned}
		&{\rm Var}\left(\frac{1}{K}{\rm tr}\left(\tilde{\mathbf{Q}}\bm{\mathcal{Y}}^H\mathbf{D}_1\bm{\mathcal{Y}}\mathbf{D}_2\right)\right)
		\\&\leqslant \frac{\lambda_{\rm max}}{K}\sum_{i,j}E\left(\left|\frac{1}{K}\sum_{p,q}\frac{\partial \left(\tilde{\mathbf{Q}}\bm{\mathcal{Y}}^H\mathbf{D}_1\bm{\mathcal{Y}}\mathbf{D}_2\right)_{pq}}{\partial \bm{\mathcal{Y}}_{ij}}\right|^2\right.\\&
		\left. \qquad\qquad\qquad+\left|\frac{1}{K}\sum_{p,q}\frac{\partial \left(\tilde{\mathbf{Q}}\bm{\mathcal{Y}}^H\mathbf{D}_1\bm{\mathcal{Y}}\mathbf{D}_2\right)_{pq}}{\partial \overline{\bm{\mathcal{Y}}_{ij}}}\right|^2\right).
	\end{aligned}
\end{equation}

We are now in position to evaluate the partial differentials in \eqref{166}. We have
\begin{equation}\label{167}
	\begin{aligned}
		&\frac{\partial \left(\tilde{\mathbf{Q}}\bm{\mathcal{Y}}^H\mathbf{D}_1\bm{\mathcal{Y}}\mathbf{D}_2\right)_{pq}}{\partial \bm{\mathcal{Y}}_{ij}}
		\\&=\sum_{r} \frac{\partial \left(\tilde{\mathbf{Q}}_{pr}(\bm{\mathcal{Y}}^H\mathbf{D}_1\bm{\mathcal{Y}}\mathbf{D}_2)_{rq}\right)}{\partial{\bm{\mathcal{Y}}_{ij}}}
		\\& = \sum_{r} \left(\frac{\partial{\tilde{\mathbf{Q}}}_{pr}}{\partial \bm{\mathcal{Y}}_{ij}}(\bm{\mathcal{Y}}^H\mathbf{D}_1\bm{\mathcal{Y}}\mathbf{D}_2)_{rq} +\tilde{\mathbf{Q}}_{pr}\frac{\partial (\bm{\mathcal{Y}}^H\mathbf{D}_1\bm{\mathcal{Y}}\mathbf{D}_2)_{rq}}{\partial \bm{\mathcal{Y}}_{ij}} \right).
	\end{aligned}
\end{equation}

By using \eqref{162}, the first term of right hand side (RHS) of \eqref{167} can be simplified as
\begin{equation}\label{170}
	\sum_{r}\frac{\partial{\tilde{\mathbf{Q}}}_{pr}}{\partial \bm{\mathcal{Y}}_{ij}}(\bm{\mathcal{Y}}^H\mathbf{D}_1\bm{\mathcal{Y}}\mathbf{D}_2)_{rq} = -(\tilde{\mathbf{Q}}\bm{\mathcal{Y}}^H)_{pi}\left(\tilde{\mathbf{Q}}\bm{\mathcal{Y}}^H\mathbf{D}_1\bm{\mathcal{Y}}\mathbf{D}_2\right)_{jq}.
\end{equation}

In order to simplify the second term of RHS of \eqref{167}, we still need to evaluate the partial differential $\frac{\partial (\bm{\mathcal{Y}}^H\mathbf{D}_1\bm{\mathcal{Y}}\mathbf{D}_2)_{rq}}{\partial \bm{\mathcal{Y}}_{ij}} $. Expanding it gives 
\begin{equation}\label{key}
	\begin{aligned}
		\frac{\partial (\bm{\mathcal{Y}}^H\mathbf{D}_1\bm{\mathcal{Y}}\mathbf{D}_2)_{rq}}{\partial \bm{\mathcal{Y}}_{ij}}  &= \sum_{s,t,n}\frac{\overline{\bm{\mathcal{Y}}_{sr}}(\mathbf{D}_1)_{st}\bm{\mathcal{Y}}_{tn}(\mathbf{D}_2)_{nq}}{\bm{\mathcal{Y}}_{ij}}
		\\& = \sum_{s}\overline{\bm{\mathcal{Y}}_{sr}}(\mathbf{D}_1)_{si}(\mathbf{D}_2)_{jq}
		\\& = \left(\bm{\mathcal{Y}}^H\mathbf{D}_1\right)_{ri}(\mathbf{D}_2)_{jq}.
	\end{aligned}
\end{equation}

Thus the second term of \eqref{167} can be transform to
\begin{equation}\label{172}
	\sum_{r} \tilde{\mathbf{Q}}_{pr}\frac{\partial (\bm{\mathcal{Y}}^H\mathbf{D}_1\bm{\mathcal{Y}}\mathbf{D}_2)_{rq}}{\partial \bm{\mathcal{Y}}_{ij}} = \left(\tilde{\mathbf{Q}}\bm{\mathcal{Y}}^H\mathbf{D}_1\right)_{pi}(\mathbf{D}_2)_{jq}.
\end{equation}

Substituting \eqref{170} and \eqref{172} into \eqref{167},  we obtain 
\begin{equation}\label{173}
	\begin{aligned}
		&\frac{\partial \left(\tilde{\mathbf{Q}}\bm{\mathcal{Y}}^H\mathbf{D}_1\bm{\mathcal{Y}}\mathbf{D}_2\right)_{pq}}{\partial \bm{\mathcal{Y}}_{ij}}
		\\& = -(\tilde{\mathbf{Q}}\bm{\mathcal{Y}}^H)_{pi}\left(\tilde{\mathbf{Q}}\bm{\mathcal{Y}}^H\mathbf{D}_1\bm{\mathcal{Y}}\mathbf{D}_2\right)_{jq} +  \left(\tilde{\mathbf{Q}}\bm{\mathcal{Y}}^H\mathbf{D}_1\right)_{pi}(\mathbf{D}_2)_{jq}.
	\end{aligned}
\end{equation}

Proceeding similarly, we can obtain
\begin{equation}\label{174}
	\begin{aligned}
		&\frac{\partial \left(\tilde{\mathbf{Q}}\bm{\mathcal{Y}}^H\mathbf{D}_1\bm{\mathcal{Y}}\mathbf{D}_2\right)_{pq}}{\partial \overline{\bm{\mathcal{Y}}_{ij}}}
		\\&=-\tilde{\mathbf{Q}}_{pj}\left(\bm{\mathcal{Y}}\tilde{\mathbf{Q}}\bm{\mathcal{Y}}^H\mathbf{D}_1\bm{\mathcal{Y}}\mathbf{D}_2\right)_{iq} + \tilde{\mathbf{Q}}_{pj}(\mathbf{D}_1\bm{\mathcal{Y}}\mathbf{D}_2)_{iq}
	\end{aligned}
\end{equation}
%
%
%

Substituting \eqref{173} and \eqref{174} into \eqref{166}, we obtain
\begin{equation}\label{}
\small	\begin{aligned}
		&{\rm Var}\left(\frac{1}{K}{\rm tr}\left(\tilde{\mathbf{Q}}\bm{\mathcal{Y}}^H\mathbf{D}_1\bm{\mathcal{Y}}\mathbf{D}_2\right)\right)
		\\&\leqslant \frac{\lambda_{\rm max}}{K^3}\sum_{i,j} E\left(\left|(\tilde{\mathbf{Q}}\bm{\mathcal{Y}}^H\mathbf{D}_1\bm{\mathcal{Y}}\mathbf{D}_2\tilde{\mathbf{Q}}\bm{\mathcal{Y}}^H)_{ji}\right|^2 \right.
		\\&\left. \quad+ \left|(\mathbf{D}_2\tilde{\mathbf{Q}}\bm{\mathcal{Y}}^H\mathbf{D}_1)_{ji}\right|^2 +\left|(\bm{\mathcal{Y}}\tilde{\mathbf{Q}}\bm{\mathcal{Y}}^H\mathbf{D}_1\bm{\mathcal{Y}}\mathbf{D}_2\tilde{\mathbf{Q}})_{ij}\right|^2+\left|(\mathbf{D}_1\bm{\mathcal{Y}}\mathbf{D}_2\tilde{\mathbf{Q}})_{ij}\right|^2\right)
		\\&\leqslant \frac{\lambda_{\rm max}}{K^3} E{\rm tr}(\tilde{\mathbf{Q}}\bm{\mathcal{Y}}^H\mathbf{D}_1\bm{\mathcal{Y}}\mathbf{D}_2\tilde{\mathbf{Q}}\bm{\mathcal{Y}}^H\bm{\mathcal{Y}}\tilde{\mathbf{Q}}^H\mathbf{D}_2^H\bm{\mathcal{Y}}^H\mathbf{D}_1^H\bm{\mathcal{Y}}\tilde{\mathbf{Q}}^H) 
		\\&\quad+\frac{\lambda_{\rm max}}{K^3}E{\rm tr}(\mathbf{D}_2\tilde{\mathbf{Q}}\bm{\mathcal{Y}}^H\mathbf{D}_1\mathbf{D}_1^H\bm{\mathcal{Y}}\tilde{\mathbf{Q}}^H\mathbf{D}_2^H) 
		\\&\quad+\frac{\lambda_{\rm max}}{K^3}E{\rm tr}(\bm{\mathcal{Y}}\tilde{\mathbf{Q}}\bm{\mathcal{Y}}^H\mathbf{D}_1\bm{\mathcal{Y}}\mathbf{D}_2\tilde{\mathbf{Q}}\tilde{\mathbf{Q}}^H\mathbf{D}_2^H\bm{\mathcal{Y}}^H\mathbf{D}_1^H\bm{\mathcal{Y}}\tilde{\mathbf{Q}}^H\bm{\mathcal{Y}}^H) 
		\\&\quad+\frac{\lambda_{\rm max}}{K^3}E{\rm tr}(\mathbf{D}_1\bm{\mathcal{Y}}\mathbf{D}_2\tilde{\mathbf{Q}}\tilde{\mathbf{Q}}^H\mathbf{D}_2^H\bm{\mathcal{Y}}^H\mathbf{D}_1^H)
		\\&\leqslant \frac{\lambda_{\rm max}||\mathbf{D}_1||^2||\mathbf{D}_2||^2}{K^3} \left(E{\rm tr}(\tilde{\mathbf{Q}}\bm{\mathcal{Y}}^H\bm{\mathcal{Y}}\tilde{\mathbf{Q}}\bm{\mathcal{Y}}^H\bm{\mathcal{Y}}\tilde{\mathbf{Q}}^H\bm{\mathcal{Y}}^H\bm{\mathcal{Y}}\tilde{\mathbf{Q}}^H)
		\right.\\&\quad\left.+E{\rm tr}(\tilde{\mathbf{Q}}\bm{\mathcal{Y}}^H\bm{\mathcal{Y}}\tilde{\mathbf{Q}}^H) +  E{\rm tr}(\bm{\mathcal{Y}}\tilde{\mathbf{Q}}\bm{\mathcal{Y}}^H\mathbf{Y}'\tilde{\mathbf{Q}}\tilde{\mathbf{Q}}^H\bm{\mathcal{Y}}^H\bm{\mathcal{Y}}\tilde{\mathbf{Q}}^H\bm{\mathcal{Y}}^H) 
		\right.\\&\quad \left.+ E{\rm tr}(\bm{\mathcal{Y}}\tilde{\mathbf{Q}}\tilde{\mathbf{Q}}^H\bm{\mathcal{Y}}^H)\right).
	\end{aligned}
\end{equation}

By using the identities \eqref{164}, we obtain
\begin{equation}\label{122}
	\begin{aligned}
		&{\rm Var}\left(\frac{1}{K}{\rm tr}\left(\tilde{\mathbf{Q}}\bm{\mathcal{Y}}^H\mathbf{D}_1\bm{\mathcal{Y}}\mathbf{D}_2\right)\right)
		\\&\leqslant \frac{2\lambda_{\rm max}||\mathbf{D}_1||^2||\mathbf{D}_2||^2||\tilde{\mathbf{Q}}||}{K^2} E\left[\frac{1}{K}{\rm tr}\left((\mathbf{I}_K+z\tilde{\mathbf{Q}})^2(\mathbf{I}_K+\bar{z}\tilde{\mathbf{Q}}^H)\right.\right.
		\\&\left.\left.\qquad\qquad\quad\qquad\qquad\qquad\qquad+(\mathbf{I}_K+z\tilde{\mathbf{Q}})\right)\right]
		\\&\leqslant \frac{2\lambda_{\rm max}||\mathbf{D}_1||^2||\mathbf{D}_2||^2}{K^2}\frac{1}{|{\rm Im}z|}\left(2+\frac{4|z|}{|{\rm Im}z|}+\frac{3|z|^2}{|{\rm Im}z|^2} + \frac{|z|^3}{|{\rm Im}z|^3}\right)
		\\&	\leqslant\frac{2\lambda_{\rm max}||\mathbf{D}_1||^2||\mathbf{D}_2||^2}{K^2}\left(|z|^3+3|z|^2+4|z|+2\right)
		\\&\quad\times\left(\frac{1}{|{\rm Im}z|^4}+\frac{1}{|{\rm Im}z|^3}+\frac{1}{|{\rm Im}z|^2}+\frac{1}{|{\rm Im}z|}\right).
	\end{aligned}
\end{equation}

Since $\lambda_{\rm max}<\infty$, and $||\mathbf{D}_1||,||\mathbf{D}_2||<\infty$, we can obtain \eqref{Var2} from \eqref{122}.

\section*{Appendix E\\ Proof of Lemma \ref{Lemma Var Q'}}
Here we give the proof of Lemma \ref{Lemma Var Q'} based on the Poincare-Nash inequality. 

We first establish \eqref{Lemma Var Q' 1}. 
We introduce a differentiation formula. Let $\mathbf{A}$ be an $N\times N$ complex matrix and let $\mathbf{H}(\mathbf{A}) = (\mathbf{I}_N+\mathbf{A})^{-2}$. Let ${\bm \delta}\mathbf{ A}$ be a perturbation of $\mathbf{A}$. Then 
\begin{equation}\label{differentiation formula}
	\begin{aligned}
		\mathbf{H}(\mathbf{A}+{\bm \delta}\mathbf{ A}) &= \mathbf{H}(\mathbf{A}) - \sqrt{\mathbf{H}(\mathbf{A})}{\bm \delta}\mathbf{ A}\mathbf{H}(\mathbf{A}) 
		\\&\quad- \mathbf{H}(\mathbf{A}){\bm \delta}\mathbf{ A}\sqrt{\mathbf{H}(\mathbf{A})} +o(\Vert{\bm \delta}\mathbf{ A}\Vert)
	\end{aligned}
\end{equation}
where $o(\Vert{\bm \delta}\mathbf{ A}\Vert)$ is negligible with respect to $\Vert{\bm \delta}\mathbf{ A}\Vert$ in a neighborhood of 0. Using \eqref{differentiation formula}, we obtain
\begin{equation}\label{key}
	\begin{aligned}
		\frac{\partial{(\mathbf{Q}')_{pq}}}{\partial{\bm{\mathcal{Y}}_{ij}}} =& -\frac{\partial (\mathbf{Q}\bm{\mathcal{Y}}\bm{\mathcal{Y}}^H\mathbf{Q}')_{pq}}{\partial{\bm{\mathcal{Y}}_{ij}}} - \frac{\partial (\mathbf{Q}'\bm{\mathcal{Y}}\bm{\mathcal{Y}}^H\mathbf{Q})_{pq}}{\partial{\bm{\mathcal{Y}}_{ij}}} 
		\\& = -\sum_{r,s,t}\frac{\partial( \mathbf{Q}_{pr}\bm{\mathcal{Y}}_{rs}\overline{\bm{\mathcal{Y}}_{ts}}(\mathbf{Q}')_{tq}) }{\partial{\bm{\mathcal{Y}}_{ij}}} 
		\\&\quad- \sum_{r,s,t}\frac{\partial( (\mathbf{Q}')_{pr}\bm{\mathcal{Y}}_{rs}\overline{\bm{\mathcal{Y}}_{ts}}\mathbf{Q}_{tq}) }{\partial{\bm{\mathcal{Y}}_{ij}}}
		\\& = -\mathbf{Q}_{pi}\left(\bm{\mathcal{Y}}^H\mathbf{Q}'\right)_{jq}-(\mathbf{Q}')_{pi}(\bm{\mathcal{Y}}^H\mathbf{Q})_{jq}
	\end{aligned}
\end{equation}

Likewise, we have
\begin{equation}\label{key}
	\frac{\partial{(\mathbf{Q}')_{pq}}}{\partial{\overline{\bm{\mathcal{Y}}_{ij}}}} = - (\mathbf{Q}\bm{\mathcal{Y}})_{pj}(\mathbf{Q}')_{iq} - (\mathbf{Q}'\bm{\mathcal{Y}})_{pj}\mathbf{Q}_{iq}
\end{equation}

Hence 
\begin{equation}\label{key}
	\begin{aligned}
		&\frac{\partial (\mathbf{C}_1\mathbf{Q}'\mathbf{C}_2)_{pq}}{\partial \bm{\mathcal{Y}}_{ij}} 
		\\&\quad= -(\mathbf{C}_1\mathbf{Q})_{pi}\left(\bm{\mathcal{Y}}^H\mathbf{Q}'\mathbf{C}_2\right)_{jq}-(\mathbf{C}_1\mathbf{Q}')_{pi}(\bm{\mathcal{Y}}^H\mathbf{Q}\mathbf{C}_2)_{jq}
	\end{aligned}
\end{equation}
and
\begin{equation}\label{key}
	\frac{\partial (\mathbf{C}_1\mathbf{Q}'\mathbf{C}_2)_{pq}}{\partial \overline{\bm{\mathcal{Y}}_{ij}}} =- (\mathbf{C}_1\mathbf{Q}\bm{\mathcal{Y}})_{pj}(\mathbf{Q}'\mathbf{C}_2)_{iq} - (\mathbf{C}_1\mathbf{Q}'\bm{\mathcal{Y}})_{pj}(\mathbf{Q}\mathbf{C}_2)_{iq}
\end{equation}

Applying the Poincare-Nash inequality, we have
\begin{equation}\label{key}
	\begin{aligned}
		&{\rm Var}\left(\frac{1}{K}{\rm tr} (\mathbf{C}_1\mathbf{Q}'\mathbf{C}_2)\right)
		\\& \leqslant \frac{\lambda_{\rm max}}{K}\sum_{i,j}E\left(\left|\frac{1}{K}\sum_{p,q}\frac{\partial (\mathbf{C}_1\mathbf{Q}'\mathbf{C}_2)_{pq}}{\partial \bm{\mathcal{Y}}_{ij}}\right|^2\right. \\&\left.\qquad\qquad\qquad\quad+\left|\frac{1}{K}\sum_{p,q}\frac{\partial (\mathbf{C}_1\mathbf{Q}'\mathbf{C}_2)_{pq}}{\partial \overline{\bm{\mathcal{Y}}_{ij}}}\right|^2\right)
		\\&\leqslant \frac{\lambda_{\rm max}}{K^3}\sum_{i,j}E\left(\left|(\bm{\mathcal{Y}}^H\mathbf{Q}'\mathbf{C}_2\mathbf{C}_1\mathbf{Q})_{ji}\right|^2+\left|(\bm{\mathcal{Y}}^H\mathbf{Q}\mathbf{C}_2\mathbf{C}_1\mathbf{Q}')_{ji}\right|^2\right.
		\\&\left.\qquad\qquad\qquad\quad +\left|(\mathbf{Q}'\mathbf{C}_2\mathbf{C}_1\mathbf{Q}\bm{\mathcal{Y}})_{ij}\right|^2+\left|(\mathbf{Q}\mathbf{C}_2\mathbf{C}_1\mathbf{Q}'\bm{\mathcal{Y}})_{ij}\right|^2\right)
		\\&\leqslant 2\frac{\lambda_{\rm max}}{K^3}E{\rm tr}\left(\bm{\mathcal{Y}}^H\mathbf{Q}'\mathbf{C}_2\mathbf{C}_1\mathbf{Q}\mathbf{Q}^H\mathbf{C}_1^H\mathbf{C}_2^H\mathbf{Q}'^H\bm{\mathcal{Y}}\right.
		\\&\left.\qquad\qquad\qquad+\mathbf{Q}'\mathbf{C}_2\mathbf{C}_1\mathbf{Q}\bm{\mathcal{Y}}\bm{\mathcal{Y}}^H\mathbf{Q}^H\mathbf{C}_1^H\mathbf{C}_2^H\mathbf{Q}'^H\right)
	\end{aligned}
\end{equation}

Noticing that $\mathbf{Q}' = \mathbf{Q}\mathbf{Q}$ and using the identities \eqref{QYY' = I+zQ},  we have
\begin{equation}\label{key}
	\begin{aligned}
		&{\rm Var}\left(\frac{1}{K}{\rm tr} (\mathbf{C}_1\mathbf{Q}'\mathbf{C}_2)\right)
		\\& \leqslant 2\frac{\lambda_{\rm max}}{K^3}E{\rm tr}\left(\mathbf{Q}'\mathbf{C}_2\mathbf{C}_1\mathbf{Q}\mathbf{Q}^H\mathbf{C}_1^H\mathbf{C}_2^H\mathbf{Q}^H(\mathbf{I}_N+\bar{z}\mathbf{Q}^H)\right.
		\\&\left.\quad+\mathbf{Q}'\mathbf{C}_2\mathbf{C}_1(\mathbf{I}_N+z\mathbf{Q})\mathbf{Q}^H\mathbf{C}_1^H\mathbf{C}_2^H\mathbf{Q}'^H\right)
		\\&\leqslant \frac{2\lambda_{\rm max}}{K^2}\frac{\Vert\mathbf{C}_1\Vert^2\Vert\mathbf{C}_2\Vert^2}{|{\rm Im}z|^5}E\left[\frac{c_K}{N}{\rm tr}\left((\mathbf{I}_N+\bar{z}\mathbf{Q}^H)+(\mathbf{I}_N+z\mathbf{Q})\right)\right]
		\\& \leqslant 4c_K\frac{\lambda_{\rm max}}{K^2}\frac{\Vert\mathbf{C}_1\Vert^2\Vert\mathbf{C}_2\Vert^2}{|{\rm Im}z|^5}\left(1+\frac{|z|}{|{\rm Im}z|}\right)
		\\&\leqslant  4c_K\frac{\lambda_{\rm max}}{K^2}{\Vert\mathbf{C}_1\Vert^2\Vert\mathbf{C}_2\Vert^2}(1+|z|)\left(|{\rm Im}z|^{-6}+|{\rm Im}z|^{-5}\right)
	\end{aligned}
\end{equation}
which ends the proof of \eqref{Lemma Var Q' 1}.

Next we establish \eqref{Lemma Var Q' 3}. We note that
\begin{equation}\label{key}
	\begin{aligned}
		\frac{\partial{(\tilde{\mathbf{Q}}')_{pq}}}{\partial{\bm{\mathcal{Y}}_{ij}}} &= -\frac{\partial (\tilde{\mathbf{Q}}\bm{\mathcal{Y}}^H\bm{\mathcal{Y}}\tilde{\mathbf{Q}}')_{pq}}{\partial{\bm{\mathcal{Y}}_{ij}}} - \frac{\partial (\mathbf{Q}'\bm{\mathcal{Y}}\bm{\mathcal{Y}}^H\mathbf{Q})_{pq}}{\partial{\bm{\mathcal{Y}}_{ij}}} 
		\\& = -\sum_{r,s,t}\frac{\partial( \tilde{\mathbf{Q}}_{pr}\overline{\bm{\mathcal{Y}}_{sr}}\bm{\mathcal{Y}}_{st}(\tilde{\mathbf{Q}}')_{tq}) }{\partial{\bm{\mathcal{Y}}_{ij}}} 
		\\&\quad- \sum_{r,s,t}\frac{\partial( (\tilde{\mathbf{Q}}')_{pr}\overline{\bm{\mathcal{Y}}_{sr}}\bm{\mathcal{Y}}_{st}\tilde{\mathbf{Q}}_{tq}) }{\partial{\bm{\mathcal{Y}}_{ij}}}
		\\& = -(\tilde{\mathbf{Q}}\bm{\mathcal{Y}}^H)_{pi}(\tilde{\mathbf{Q}}')_{jq}-(\tilde{\mathbf{Q}}'\bm{\mathcal{Y}}^H)_{pi}\tilde{\mathbf{Q}}_{jq}
	\end{aligned}
\end{equation}

Likewise, we have
\begin{equation}\label{key}
	\begin{aligned}
		\frac{\partial{(\tilde{\mathbf{Q}}')_{pq}}}{\partial{\overline{\bm{\mathcal{Y}}_{ij}}}} &= -\tilde{\mathbf{Q}}_{pj}(\bm{\mathcal{Y}}\tilde{\mathbf{Q}}')_{iq}-(\tilde{\mathbf{Q}}')_{pj}(\bm{\mathcal{Y}}\tilde{\mathbf{Q}})_{iq}
	\end{aligned}
\end{equation}

Using the Poincare-Nash inequality gives
\begin{equation}\label{key}
	\begin{aligned}
		&{\rm Var}\left(\frac{1}{K}{\rm tr}\tilde{\mathbf{Q}}'\right)
		\\&\leqslant \frac{\lambda_{\rm max}}{K}\sum_{i,j}E\left(\left|\frac{1}{K}\sum_{p,q}\frac{\partial (\tilde{\mathbf{Q}}')_{pq}}{\partial \bm{\mathcal{Y}}_{ij}} \right|^2+\left|\frac{1}{K}\sum_{p,q}\frac{\partial (\tilde{\mathbf{Q}}')_{pq}}{\partial \overline{\bm{\mathcal{Y}}_{ij}}} \right|^2\right)
		\\& \leqslant \frac{\lambda_{\rm max}}{K^3}\sum_{i,j}E\left(|(\tilde{\mathbf{Q}}'\tilde{\mathbf{Q}}\bm{\mathcal{Y}}^H)_{ji}|^2+|(\tilde{\mathbf{Q}}\tilde{\mathbf{Q}}'\bm{\mathcal{Y}}^H)_{ji}|^2+|(\bm{\mathcal{Y}}\tilde{\mathbf{Q}}'\tilde{\mathbf{Q}})_{ij}|^2\right.
		\\&\qquad\qquad\qquad\quad\left.+|(\bm{\mathcal{Y}}\tilde{\mathbf{Q}}\tilde{\mathbf{Q}}')_{ij}|^2\right)
		\\&\leqslant \frac{\lambda_{\rm max}}{K^3}E{\rm tr}\left(\tilde{\mathbf{Q}}'\tilde{\mathbf{Q}}\bm{\mathcal{Y}}^H\bm{\mathcal{Y}}\tilde{\mathbf{Q}}^H\tilde{\mathbf{Q}}'^H + \tilde{\mathbf{Q}}\tilde{\mathbf{Q}}'\bm{\mathcal{Y}}^H\bm{\mathcal{Y}}\tilde{\mathbf{Q}}'^H\tilde{\mathbf{Q}}^H\right.
		\\&\qquad\qquad\quad\left.+\bm{\mathcal{Y}}\tilde{\mathbf{Q}}'\tilde{\mathbf{Q}}\tilde{\mathbf{Q}}^H\tilde{\mathbf{Q}}'^H\bm{\mathcal{Y}}^H+\bm{\mathcal{Y}}\tilde{\mathbf{Q}}\tilde{\mathbf{Q}}'\tilde{\mathbf{Q}}'^H\tilde{\mathbf{Q}}^H\bm{\mathcal{Y}}^H\right)
	\end{aligned}
\end{equation}

Using the fact that $\tilde{\mathbf{Q}}' = \tilde{\mathbf{Q}}\tilde{\mathbf{Q}}$ and the identities \eqref{164}, we have
\begin{equation}\label{key}
	\begin{aligned}
		{\rm Var}\left(\frac{1}{K}{\rm tr}\tilde{\mathbf{Q}}'\right)
		&\leqslant 4\frac{\lambda_{\rm max}}{K^2}\Vert\tilde{\mathbf{Q}}\Vert^3\Vert\tilde{\mathbf{Q}}'\Vert E\left(\frac{1}{K}{\rm tr}\left(\mathbf{I}_K+\bar{z}{\tilde{\mathbf{Q}}}\right)\right)
		\\&\leqslant4 \frac{\lambda_{\rm max}}{K^2}\frac{1}{|{\rm Im}z|^{5}}\left(1+\frac{|z|}{|{\rm Im}z|}\right)
		\\&\leqslant 4\frac{\lambda_{\rm max}}{K^2}(1+|z|)\left(|{\rm Im}z|^{-6}+|{\rm Im}z|^{-5}\right)
	\end{aligned}
\end{equation}
which completes the proof of \eqref{Lemma Var Q' 3}.

Finally, we establish \eqref{Lemma Var Q' 2}. We first note that
\begin{equation}\label{key}
	\small\begin{aligned}
		&\frac{\partial(\tilde{\mathbf{Q}}'\bm{\mathcal{Y}}^H\mathbf{D}_1\bm{\mathcal{Y}}\mathbf{D}_2)_{pq}}{\partial \bm{\mathcal{Y}}_{ij} } 
		\\&= \sum_r \frac{\partial (\tilde{\mathbf{Q}}')_{pr}}{\partial \bm{\mathcal{Y}}_{ij}}(\bm{\mathcal{Y}}^H\mathbf{D}_1\bm{\mathcal{Y}}\mathbf{D}_2)_{rq} +\sum_r (\tilde{\mathbf{Q}}')_{pr}\frac{\partial (\bm{\mathcal{Y}}^H\mathbf{D}_1\bm{\mathcal{Y}}\mathbf{D}_2)_{rq}}{\partial \bm{\mathcal{Y}}_{ij}}
		\\& =  -(\tilde{\mathbf{Q}}\bm{\mathcal{Y}}^H)_{pi}\left(\tilde{\mathbf{Q}}'\bm{\mathcal{Y}}^H\mathbf{D}_1\bm{\mathcal{Y}}\mathbf{D}_2\right)_{jq}-(\tilde{\mathbf{Q}}'\bm{\mathcal{Y}}^H)_{pi}(\tilde{\mathbf{Q}}\bm{\mathcal{Y}}^H\mathbf{D}_1\bm{\mathcal{Y}}\mathbf{D}_2)_{jq} 
		\\&\quad+(\tilde{\mathbf{Q}}'\bm{\mathcal{Y}}^H\mathbf{D}_1)_{pi}(\mathbf{D}_2)_{jq}.
	\end{aligned}
\end{equation}

Similarly, we have
\begin{equation}\label{key}
	\begin{aligned}
		&\frac{\partial(\mathbf{Q}'\bm{\mathcal{Y}}^H\mathbf{D}_1\bm{\mathcal{Y}}\mathbf{D}_2)_{pq}}{\partial \overline{\bm{\mathcal{Y}}_{ij}} } 
		\\&=- \tilde{\mathbf{Q}}_{pj}(\bm{\mathcal{Y}}\tilde{\mathbf{Q}}'\bm{\mathcal{Y}}^H\mathbf{D}_1\bm{\mathcal{Y}}\mathbf{D}_2)_{iq} - (\tilde{\mathbf{Q}}')_{pj}(\bm{\mathcal{Y}}\tilde{\mathbf{Q}}\bm{\mathcal{Y}}^H\mathbf{D}_1\bm{\mathcal{Y}}\mathbf{D}_2)_{iq}
		\\&\quad+ (\tilde{\mathbf{Q}}')_{pj}(\mathbf{D}_1\bm{\mathcal{Y}}\mathbf{D}_2)_{iq}.
	\end{aligned}
\end{equation}

The Poincare-Nash inequality gives
\begin{equation}\label{key}
	\begin{aligned}
		&{\rm Var}\left(\frac{1}{K}{\rm tr}(\tilde{\mathbf{Q}}'\bm{\mathcal{Y}}^H\mathbf{D}_1\bm{\mathcal{Y}}\mathbf{D}_2)\right)
		\\&\leqslant \frac{\lambda_{\rm max}}{K}\sum_{i,j}E\left(\left|\frac{1}{K}\frac{\partial(\tilde{\mathbf{Q}}'\bm{\mathcal{Y}}^H\mathbf{D}_1\bm{\mathcal{Y}}\mathbf{D}_2)_{pq}}{\partial \bm{\mathcal{Y}}_{ij} } \right|^2\right.
		\\&\left.\quad+\left|\frac{1}{K}\frac{\partial(\tilde{\mathbf{Q}}'\bm{\mathcal{Y}}^H\mathbf{D}_1\bm{\mathcal{Y}}\mathbf{D}_2)_{pq}}{\partial \overline{\bm{\mathcal{Y}}_{ij}} } \right|^2\right)
		\\&	\leqslant \frac{\lambda_{\rm max}}{K^3}\sum_{i,j}E\left(\left|(\tilde{\mathbf{Q}}'\bm{\mathcal{Y}}^H\mathbf{D}_1\bm{\mathcal{Y}}\mathbf{D}_2 \tilde{\mathbf{Q}}\bm{\mathcal{Y}}^H	)_{ji}\right|^2
		\right.\\&\left.\quad+\left|(\tilde{\mathbf{Q}}\bm{\mathcal{Y}}^H\mathbf{D}_1\bm{\mathcal{Y}}\mathbf{D}_2 \tilde{\mathbf{Q}}'\bm{\mathcal{Y}}^H)_{ji}\right|^2+ \left|(\mathbf{D}_2\tilde{\mathbf{Q}}'\bm{\mathcal{Y}}^H\mathbf{D}_1)_{ji}\right|^2\right.
		\\&\left.\quad+\left|(\bm{\mathcal{Y}}\tilde{\mathbf{Q}}'\bm{\mathcal{Y}}^H\mathbf{D}_1\bm{\mathcal{Y}}\mathbf{D}_2\tilde{\mathbf{Q}})_{ij}\right|^2+\left|(\bm{\mathcal{Y}}\tilde{\mathbf{Q}}\bm{\mathcal{Y}}^H\mathbf{D}_1\bm{\mathcal{Y}}\mathbf{D}_2\tilde{\mathbf{Q}}')_{ij}\right|^2\right.
		\\&\left.\quad+\left|(\mathbf{D}_1\bm{\mathcal{Y}}\mathbf{D}_2\tilde{\mathbf{Q}}')_{ij}\right|^2\right)
		\\& \leqslant \frac{\lambda_{\rm max}}{K^3}E{\rm tr}\left(\tilde{\mathbf{Q}}'\bm{\mathcal{Y}}^H\mathbf{D}_1\bm{\mathcal{Y}}\mathbf{D}_2 \tilde{\mathbf{Q}}\bm{\mathcal{Y}}^H\bm{\mathcal{Y}}\tilde{\mathbf{Q}}^H\mathbf{D}_2^H\bm{\mathcal{Y}}^H\mathbf{D}_1^H\bm{\mathcal{Y}}\tilde{\mathbf{Q}}'^H\right.
		\\&\left. \qquad\qquad\quad+ \tilde{\mathbf{Q}}\bm{\mathcal{Y}}^H\mathbf{D}_1\bm{\mathcal{Y}}\mathbf{D}_2 \tilde{\mathbf{Q}}'\bm{\mathcal{Y}}^H\bm{\mathcal{Y}}\tilde{\mathbf{Q}}'^H\mathbf{D}_2^H\bm{\mathcal{Y}}^H\mathbf{D}_1^H\bm{\mathcal{Y}}\tilde{\mathbf{Q}}^H\right.
		\\&\left. \qquad\qquad\quad+\mathbf{D}_2\tilde{\mathbf{Q}}'\bm{\mathcal{Y}}^H\mathbf{D}_1\mathbf{D}_1^H\bm{\mathcal{Y}}\tilde{\mathbf{Q}}'^H\mathbf{D}_2^H\right.
		\\&\left. \qquad\qquad\quad+ \bm{\mathcal{Y}}\tilde{\mathbf{Q}}'\bm{\mathcal{Y}}^H\mathbf{D}_1\bm{\mathcal{Y}}\mathbf{D}_2\tilde{\mathbf{Q}}\tilde{\mathbf{Q}}^H\mathbf{D}_2^H\bm{\mathcal{Y}}^H\mathbf{D}_1^H\bm{\mathcal{Y}}\tilde{\mathbf{Q}}'^H\bm{\mathcal{Y}}^H\right.
		\\&\left. \qquad\qquad\quad+\bm{\mathcal{Y}}\tilde{\mathbf{Q}}\bm{\mathcal{Y}}^H\mathbf{D}_1\bm{\mathcal{Y}}\mathbf{D}_2\tilde{\mathbf{Q}}'\tilde{\mathbf{Q}}'^H\mathbf{D}_2^H\bm{\mathcal{Y}}^H\mathbf{D}_1^H\bm{\mathcal{Y}}\tilde{\mathbf{Q}}^H\bm{\mathcal{Y}}^H\right.
		\\&\left. \qquad\qquad\quad+
		\mathbf{D}_1\bm{\mathcal{Y}}\mathbf{D}_2\tilde{\mathbf{Q}}'\tilde{\mathbf{Q}}'^H\mathbf{D}_2^H\bm{\mathcal{Y}}^H\mathbf{D}_1^H
		\right)
		\\& \leqslant 2\frac{\lambda_{\rm max}}{K^2}\Vert\mathbf{D}_1\Vert^2\Vert\mathbf{D}_2\Vert^2\Vert\tilde{\mathbf{Q}}\Vert\Vert\tilde{\mathbf{Q}}'\Vert E\left[\frac{1}{K}{\rm tr}\left((\mathbf{I}_K+z\tilde{\mathbf{Q}})\right.\right.
		\\&\left.\left.\qquad\qquad\qquad\qquad\qquad\qquad+ 2(\mathbf{I}_K+z\tilde{\mathbf{Q}})^2(\mathbf{I}_K+\bar{z}\tilde{\mathbf{Q}}^H)\right)\right]
		\\&\leqslant 2\frac{\lambda_{\rm max}}{K^2} \frac{\Vert\mathbf{D}_1\Vert^2\Vert\mathbf{D}_2\Vert^2}{|{\rm Im}z|^3}\left(3+7\frac{|z|}{|{\rm Im}z|}+6\frac{|z|^2}{|{\rm Im}z|^2}+2\frac{|z|^3}{|{\rm Im}z|^3}\right)
		\\&\leqslant 2\frac{\lambda_{\rm max}}{K^2} \Vert\mathbf{D}_1\Vert^2\Vert\mathbf{D}_2\Vert^2\left(2|z|^3+6|z|^2+7|z|+3\right)
		\\&\quad\quad\times \left(|{\rm Im}z|^{-6}+|{\rm Im}z|^{-5}+|{\rm Im}z|^{-4}+|{\rm Im}z|^{-3}\right),
	\end{aligned}
\end{equation}
which completes the proof of \eqref{Lemma Var Q' 2}.
\\
\section*{Appendix F\\Proof of Corollary 1}
We first establish (23). Write
\begin{equation}\label{E(bR-b)^2}
	\begin{aligned}
		&E\left[(b_{{\hat{\mathbf{R}}}}(z) - b_K(z))^2\right]
		\\& = E\left[(b_{{\hat{\mathbf{R}}}}(z)-Eb_{{\hat{\mathbf{R}}}}(z)+Eb_{{\hat{\mathbf{R}}}}(z)-b_K(z))^2\right]
		\\& = {\rm Var}(b_{{\hat{\mathbf{R}}}}(z)) + \left(Eb_{{\hat{\mathbf{R}}}}(z)-b_K(z)\right)^2.
	\end{aligned}
\end{equation}

Using \eqref{Var0}, we obtain
\begin{equation}\label{178}
	\begin{aligned}
		{\rm Var}(b_{\hat{\mathbf{R}}}(z)) = {\rm Var}\left(\frac{1}{c_KK}{\rm tr}\mathbf{Q}(z)\right)&\leqslant \frac{1}{K^2}P_1(|z|)P_4(|{\rm Im}z|^{-1})
		\\&\leqslant \frac{1}{K^2}P_{18}(|z|)P_{24}(|{\rm Im}z|^{-1})
	\end{aligned}
\end{equation}

Using (21), we obtain
\begin{equation}\label{179}
	\begin{aligned}
		\left(Eb_{{\hat{\mathbf{R}}}}(z)-b_K(z)\right)^2 &\leqslant \frac{1}{K^4}P_{18}(|z|)P_{24}(|{\rm Im}z|^{-1})
		\\&\leqslant\frac{1}{K^2}P_{18}(|z|)P_{24}(|{\rm Im}z|^{-1})
	\end{aligned}
\end{equation}

Hence we obtain (23) by using \eqref{179} and \eqref{178} in \eqref{E(bR-b)^2}. The proof of (24) is almost identical and hence is omitted.

\section*{Appendix G\\Proof of Proposition 1}
Here we focus on calculating the integrals $I_1$ (i.e., Eq. (83)) and $I_2$ (i.e., Eq. (84)).

We first evaluate the integral $I_1$. We need to distinct two cases, i.e., $m\ne n$ and $m=n$, to discuss. 

We first deal with the case $m\ne n$. Define 
\begin{equation}\label{key}
	r(z_1,z_2)= b_{\underline{\hat{\mathbf{R}}}}(1/z_1)b_{\underline{\hat{\mathbf{R}}}}(1/z_2)\frac{ \frac{\partial b_{\underline{\hat{\mathbf{R}}}}(1/z_1)}{\partial z_1} \frac{\partial b_{\underline{\hat{\mathbf{R}}}}(1/z_2)}{\partial z_2} }{(b_{\underline{\hat{\mathbf{R}}}}(1/z_1)-b_{\underline{\hat{\mathbf{R}}}}(1/z_2))^2}.
\end{equation}

Let $z_2$ be fixed. By observing that 
\begin{equation}\label{key}
	b_{\underline{\hat{\mathbf{R}}}}(1/z) = -\frac{c_K}{N}\sum_{i=1}^N\frac{\hat{\rho}_iz}{\hat{\rho}_i-z}-(1-c_K)z
\end{equation}
and
\begin{equation}\label{key}
	\frac{\partial{b_{\underline{\hat{\mathbf{R}}}}(1/z)}}{\partial z}= -\frac{c_K}{N}\sum_{i=1}^N\left(\frac{\hat{\rho}_i}{\hat{\rho}_i-z}\right)^2-(1-c_K),
\end{equation}
one can check that $\{\hat{\rho}_k,k\in\mathcal{N}_m\}$ are the first-order poles of $r(z_1,z_2)$ at $z_1$ within the contour $\Gamma_{m}^+$. Taking $z_1\to\hat{\rho}_k$ and applying the residue theorem give
\begin{equation}\label{84}
	\begin{aligned}
		r(\cdot,z_2)={\rm Res}(r(z_1,z_2),\hat{\rho}_k) = -b_{\underline{\hat{\mathbf{R}}}}(1/z_2) \frac{\partial b_{\underline{\hat{\mathbf{R}}}}(1/z_2)}{\partial z_2}
	\end{aligned}
\end{equation}
where $k\in \mathcal{N}_m$. 

Now we need to analyze the poles of $r(\cdot,z_2)$. A straightforward analysis shows that $r(\cdot,z_2)$ has third-order poles at $\{\hat{\rho}_l,l\in\mathcal{N}_n\}$ within contour $\Gamma_n^{'+}$.   Then by using the residue theorem again, we can calculate the residue of $r(\cdot,z_2)$ at $\hat{\rho}_l$ by
\begin{equation}\label{85}
	\begin{aligned}
		&{\rm Res}\left(r(\cdot,z_2),\hat{\rho}_l\right) \\&= \lim_{z_2\to\hat{\rho}_l}\frac{1}{2}\frac{\partial^2}{\partial z_2^2}\left(-(z_2-\hat{\rho}_l)^3b_{\underline{\hat{\mathbf{R}}}}(1/z_2) \frac{\partial b_{\underline{\hat{\mathbf{R}}}}(1/z_2)}{\partial z_2} \right)
		\\&=0.
	\end{aligned}
\end{equation}

In addition, we note that in the case $m\ne n$, the contours $\Gamma_m^+$ and $\Gamma_n^{'+}$ never intersect, which indicates that $z_1=z_2$ is not the pole of $r(z_1,z_2)$. Hence we obtain $I_1=0$ in the case $m\ne n$.

Next we calculate $I_1$ in the case $m =n$. 
Let $z_2$ be fixed, and then $r(z_1,z_2)$ has first-order poles $\left\{\hat{\rho}_k,k\in\mathcal{N}_m\right\}$ within $\Gamma_m^+$. The corresponding residue is equal to $r(\cdot,z_2)$ given in \eqref{84}. Then within $\Gamma_m^{'+}$, $r(\cdot,z_2)$ has second-order poles $\left\{\hat{\rho}_l,l\in\mathcal{N}_m\right)$ (whether $l\ne k$ or $l=k$) and the corresponding residue is 0 (equaling to \eqref{85}).

Note that in the case $m=n$, $z_1=z_2$ is the second-order pole of $r(z_1,z_2)$. Taking $z_1\to z_2$, we can calculate the residue of  $r(z_1,z_2)$ at $z_1=z_2$ by 
\begin{equation}\label{86}
	{\rm Res}(r(z_1,z_2),z_2)= b_{\underline{\hat{\mathbf{R}}}}(1/z_2) \frac{\partial b_{\underline{\hat{\mathbf{R}}}}(1/z_2)}{\partial z_2}
\end{equation}
where we use [42, Proposition 4.1.4] in calculation. Then according to \eqref{85}, we can find that the residue of \eqref{86} is 0. Hence, we obtain $I_1=0$ for the case $m=n$.

Finally, gathering the above results, we can conclude that $I_1=0$ whether in the case $m\ne n$ or $m=n$.

In what follows, we evaluate the Integral $I_2$. There are two cases to discuss depending on whether $m\ne n$ or $m=n$. 

We first deal with the case $m\ne n$. Define the integrand of $I_2$ by $\upsilon(z_1,z_2)$, i.e.
\begin{equation}\label{key}
	\upsilon(z_1,z_2) = - \frac{b_{\underline{\hat{\mathbf{R}}}}(1/z_1)b_{\underline{\hat{\mathbf{R}}}}(1/z_2)}{(z_1-z_2)^2}.
\end{equation}

Let $z_2$ be fixed. We find that $\upsilon(z_1,z_2)$ has the first-order poles $\{\hat{\rho}_k,k\in\mathcal{N}_m\}$ at $z_1$ within $\Gamma_m^+$.  The residue is calculated by
\begin{equation}\label{89}
	\begin{aligned}
		\upsilon(\cdot,z_2)=	{\rm Res}(	\upsilon(z_1,z_2),\hat{\rho}_k) 
		= -\frac{\frac{c_K}{N}\hat{\rho}_k^2}{(\hat{\rho}_k-z_2)^2}b_{\underline{\hat{\mathbf{R}}}}(1/z_2)
	\end{aligned}
\end{equation}

Through a simple analysis, it is easy to find that $\{\hat{\rho}_l,l\in\mathcal{N}_n\}$ are the first-order poles of $\upsilon(\cdot,z_2)$ within $\Gamma_n^{'+}$. Hence by using the residue theorem,  we have
\begin{equation}\label{90}
	{\rm Res}\left(\upsilon(\cdot,z_2) ,\hat{\rho}_l\right) = -\left(\frac{c_K}{N}\right)^2\frac{\hat{\rho}_k^2\hat{\rho}_l^2}{(\hat{\rho}_k-\hat{\rho}_l)^2}.
\end{equation}

In addition, in the case $m\ne n$, $z_1=z_2$ is not the residue of $\upsilon(z_1,z_2)$. Therefore, for the case $m\ne n$, we have
\begin{equation}\label{91}
	\begin{aligned}
		I_2 &= \frac{K^2}{N_mN_n}\sum_{k\in\mathcal{N}_m}\sum_{l\in\mathcal{N}_n}{\rm Res}\left(\upsilon(\cdot,z_2) ,\hat{\rho}_l\right)
		\\&=-\frac{1}{N_mN_n}\sum_{k\in\mathcal{N}_m}\sum_{l\in\mathcal{N}_n}\frac{\hat{\rho}_k^2\hat{\rho}_l^2}{(\hat{\rho}_k-\hat{\rho}_l)^2}.
	\end{aligned}
\end{equation}


Next we calculate $I_2$ in the case $m=n$.  Let $z_2$ be fixed and then we find that within contour $\Gamma_m^+$, $\upsilon(z_1,z_2)$ has the first-order poles at $\{\hat{\rho}_k,k\in\mathcal{N}_m\}$ and the corresponding residue is given by $\upsilon(\cdot,z_2)$ (see \eqref{89}). We now need to carefully analyze the poles of $\upsilon(\cdot,z_2)$. A simple analysis shows that $\upsilon(\cdot,z_2)$ has poles at $\{\hat{\rho}_l,l\in\mathcal{N}_m\}$. If $l\ne k$, $\hat{\rho}_l$ are the first-order poles of $\upsilon(\cdot,z_2)$ and the corresponding residue has been computed in \eqref{90}. If $l=k$, we may find that the pole $\hat{\rho}_k$ is of order 3. The residue is calculated by
\begin{equation}\label{92}
	{\rm Res}(\upsilon(\cdot,z_2),{\hat{\rho}}_k) = \frac{c_K}{N}\hat{\rho}_k^2\left(\frac{c_K}{N}\sum_{i=1\ne k}^N\left(\frac{\hat{\rho}_i}{\hat{\rho}_i-\hat{\rho}_k}\right)^2+(1-c_K)\right).
\end{equation}

In addition, we note that $z_1=z_2$ is the second-order pole of $\upsilon(z_1,z_2)$. Taking $z_1\to z_2$, we can calculate the residue by
\begin{equation}\label{93}
	{\rm Res}(\upsilon(z_1,z_2),z_2)= -b_{\underline{\hat{\mathbf{R}}}}(1/z_2) \frac{\partial b_{\underline{\hat{\mathbf{R}}}}(1/z_2)}{\partial z_2}
\end{equation}

Then by using the same analysis of \eqref{86}, we find that the residue of \eqref{93} is 0. 

Finally, gathering the above results, we can calculate $I_2$ in the case $m=n$ by
\begin{equation}\label{94}
	\small\begin{aligned}
		I_2 = \frac{K^2}{N_m^2}\left(\sum\limits_{(k,l)\in\mathcal{N}_m^2,k\ne l}{\rm Res}\left(\upsilon(\cdot,z_2) ,\hat{\rho}_l\right) + \sum_{k\in \mathcal{N}_m}{\rm Res}(\upsilon(\cdot,z_2),{\hat{\rho}}_k)\right).
	\end{aligned}
\end{equation}

Substituting \eqref{90} and \eqref{92} into \eqref{94}, we get 
\begin{equation}\label{95}
	\begin{aligned}
		I_2  
		&=  \frac{1}{N_m^2}\sum_{(k,l)\in\mathcal{N}_m^2,k\ne l}-\frac{\hat{\rho}_k^2\hat{\rho}_l^2}{(\hat{\rho}_k-\hat{\rho}_l)^2} 
		\\& + \frac{K^2}{N_m^2}\sum_{k\in \mathcal{N}_m} \frac{c_K}{N}\hat{\rho}_k^2\left(\frac{c_K}{N}\sum_{i=1\ne k}^N\left(\frac{\hat{\rho}_i}{\hat{\rho}_i-\hat{\rho}_k}\right)^2+(1-c_K)\right).
	\end{aligned}
\end{equation}

Note that for $k\in\mathcal{N}_m$, the following relation holds true
\begin{equation}\label{96}
	\small \sum_{i=1\ne k}^N\left(\frac{\hat{\rho}_i}{\hat{\rho}_i-\hat{\rho}_k}\right)^2 = \sum_{i\in\mathcal{N}_m,i\ne k}\left(\frac{\hat{\rho}_i}{\hat{\rho}_i-\hat{\rho}_k}\right)^2 + \sum_{i\in{\mathcal{N}}^c_m}\left(\frac{\hat{\rho}_i}{\hat{\rho}_i-\hat{\rho}_k}\right)^2 
\end{equation}
where ${\mathcal{N}}^c_m$ represents the set $\{1,2,...,N\}\setminus\mathcal{N}_m$.

Substituting \eqref{96} into \eqref{95}, we get $I_2$ for the case $m=n$:
\begin{equation}\label{97}
	\begin{aligned}
		I_2  = \frac{1}{N_m^2}\sum_{k\in\mathcal{N}_m}\sum_{l\in{\mathcal{N}}^c_m}\frac{\hat{\rho}_k^2\hat{\rho}_l^2}{(\hat{\rho}_k-\hat{\rho}_l)^2}+\frac{K-N}{N_m^2}\sum_{k\in\mathcal{N}_m}\hat{\rho}_k^2.
	\end{aligned}
\end{equation}

Gathering the results in \eqref{91} and \eqref{97}, we obtain (85).
\end{document}